\newtheorem{theorem}{Theorem}[section]
\newtheorem{proposition}[theorem]{Proposition}
\newtheorem{lemma}[theorem]{Lemma}
\newtheorem{corollary}[theorem]{Corollary}
\theoremstyle{remark}
\newtheorem{definition}[theorem]{Definition}
\newtheorem{example}[theorem]{Example}
\newtheorem{remark}{Remark}[section] 
\newtheorem{conjecture}[theorem]{Conjecture}
\newcommand{\eps}{\epsilon}
\begin{document}

\title[]{On a Nonorientable Analogue of the Milnor Conjecture}
\begin{abstract}
The nonorientable 4-genus 
$\gamma_4(K)$ of a knot $K$ is the smallest first Betti number of any nonorientable surface properly
embedded in the 4-ball, and bounding the knot $K$. We study a conjecture proposed by Batson about the value of $\gamma_4$ for
torus knots, which can be seen as a nonorientable analogue of Milnor's Conjecture for the orientable 4-genus of torus
knots. We prove the conjecture for many infinite families of torus knots, by relying on a lower bound for $\gamma_4$ formulated
by Ozsv\'ath, Stipsicz, and Szab\'o. As a side product we obtain new closed formulas for the signature of torus knots.
\end{abstract}
%
%

\author{Stanislav Jabuka}
\author{Cornelia A. Van Cott}

\maketitle
\section{Introduction} \label{SectionIntroduction}
\subsection{Background} In 1968, Milnor \cite[Remark 10.9]{Milnor:1968} famously asked if the unknotting number $u(T(p,q))$ of the $(p,q)$ torus knot $T(p,q)$, equals $\frac{1}{2}(p-1)(q-1)$. Since the smooth 4-genus $g_4$ of $T(p,q)$ is bounded from above by $u(T(p,q))$, and since an explicit unknotting of $T(p,q)$ by $\frac{1}{2}(p-1)(q-1)$ crossing changes exists, proving that $g_4(T(p,q)) = \frac{1}{2}(p-1)(q-1)$ affirmatively answers Milnor's question. The conjecture that 
$$g_4(T(p,q))=\frac{1}{2}(p-1)(q-1)$$ 
has been called {\em Milnor's Conjecture} by multiple authors, a choice of historically somewhat incorrect nomenclature that we too adopt here, in the hope that it will not cause confusion. Milnor's Conjecture was first verified by Kronheimer and Mrowka \cite{KronheimerMrowka1:1993, KronheimerMrowka2:1995} in 1993 using Donaldson style gauge theory. It has since received several other proofs, most notable is the purely combinatorial proof given by Rasmussen \cite{Rasmussen:2010}. 

In this work we discuss a nonorientable analogue of this conjecture, first formulated by Batson \cite{Batson:2014}. It concerns the value of the nonorientable smooth 4-genus $\gamma_4$ of torus knots. For any knot $K$, the {\em nonorientable smooth 4-genus $\gamma_4(K)$} is defined as the smallest first Betti number of any nonorientable surface $\Sigma$ smoothly and properly embedded in the 4-ball and with $\partial \Sigma = K$. 

The knot invariant $\gamma_4(K)$ was introduced by Murakami and Yasuhara \cite{MurakamiYasuhara:2000} only in the year 2000, and relatively little is known about it (for an overview of existing results on $\gamma_4$  see \cite{GilmerLivingston:2011, JabukaKelly:2018}). Two of the main tools for computing $\gamma_4$ are lower bounds coming from Heegaard Floer homology. To state these, let $S_r^3(K)$ denote the 3-manifold resulting from $r$-framed Dehn surgery on the knot $K\subset S^3$, with $r\in \mathbb Q$. Batson \cite{Batson:2014} proved that 
%
\begin{equation} \label{BatsonsBoundOnGamma4}
\textstyle \frac{1}{2}\sigma(K) - d(S^3_{-1}(K))  \le \gamma_4(K), 
\end{equation}
where $\sigma(K)$ is the signature of the knot $K$, and for $Y$ an oriented, integral homology 3-sphere, $d(Y)$ is its Heegaard Floer correction term \cite{OzsvathSzabo3:2003}. We shall refer to inequality \eqref{BatsonsBoundOnGamma4} as {\em Batson's bound}. Using this bound, Batson was able to show that $\gamma_4$ is an unbounded function. Prior to his work the largest known value of $\gamma_4$ was 3 \cite{GilmerLivingston:2011}. 

Ozsv\'ath, Stipsicz, and Szab\'o \cite{OSS:2017} proved the inequality
\begin{equation} \label{OSSBoundOnGamma4}
|\upsilon(K) - \textstyle \frac{1}{2}\sigma(K)| \le \gamma_4(K), 
\end{equation}
where $\upsilon(K)$ is the upsilon invariant of the knot $K$ as defined in \cite{OSS:2017}. This inequality, henceforth referred to as the {\em OSS bound}, can also be used to show that $\gamma_4$ is an unbounded function. Indeed, $\upsilon-\frac{1}{2}\sigma$ is a concordance invariant, and so if $K$ is any knot with $(\upsilon-\frac{1}{2}\sigma)(K)\ne 0$, then $\gamma_4(\#^mK)$ grows without bound as $m\to \infty$. 

\vskip3mm
To state the nonorientable analogue of Milnor's Conjecture, we first recall the notion of a nonorientable band move.

\begin{definition}
A {\em nonorientable band move} on an oriented knot $K$ is the operation of attaching an oriented band $h=[0,1]\times [0,1]$ to $K$ along $[0,1]\times \partial [0,1]$ in such a way that the orientation of the knot agrees with that of $[0,1]\times \{0\}$ and disagrees with that of $[0,1]\times \{1\}$ (or vice versa), and then performing surgery on $h$, that is replacing the arcs $[0,1]\times \partial [0,1]\subseteq K$ by the arcs $\partial [0,1]\times [0,1]$.  The resulting knot $K'$ is said to have been {\em obtained from $K$ by a nonorientable band move}, and we write $K' = K\#h$ to indicate this operation. 
\end{definition}


If we represent the torus knot $T(p,q)$ as a collection of parallel strands with slope $p/q$ on a square with opposite edges identified, then there is a natural \lq\lq simplest\rq\rq choice of a nonorientable band move, obtained by placing the band $h$ between a pair of neighboring strands, as in  Figure \ref{FigurePinchingMove}. We shall refer to this type of nonorientable band move on $T(p,q)$ as a {\em pinch move}. A pinch move clearly yields another torus knot as the newly created knot still lies on a torus. 
\begin{figure}   
\centering
\labellist
\small\hair 2pt
\pinlabel (a) at 150 10
\pinlabel (b) at 460 10
\pinlabel (c) at 760 10
\endlabellist
\includegraphics[width=15cm]{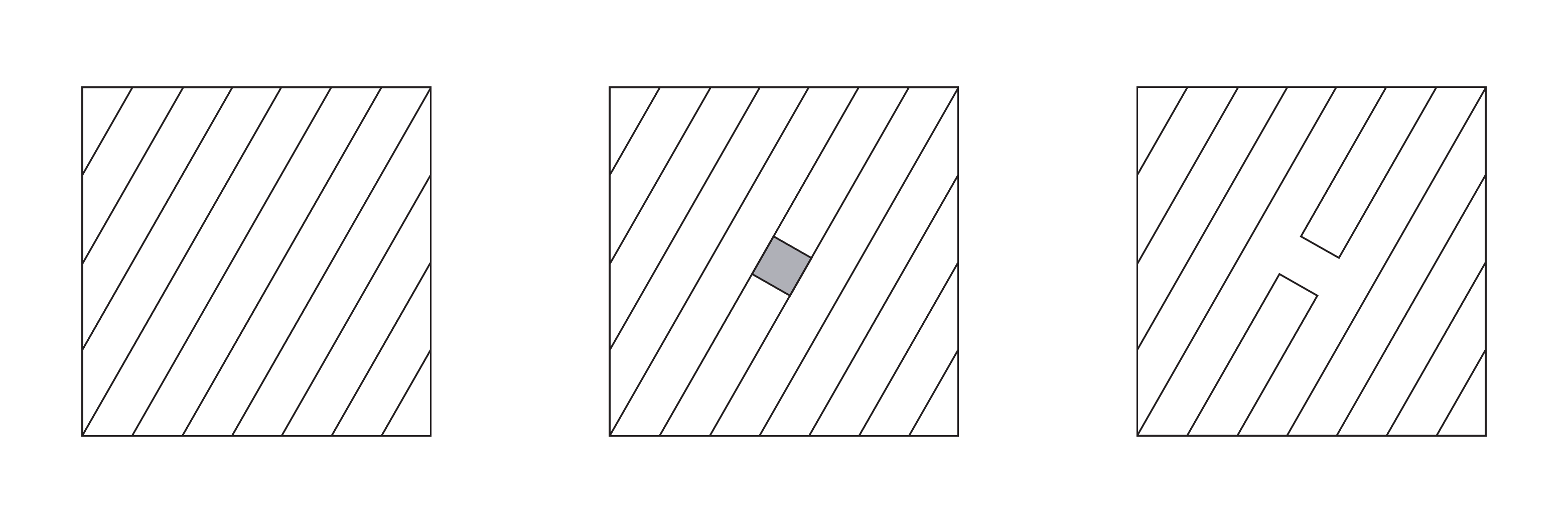}
\vskip2mm
\caption{A nonorientable band move on the torus knot $T(7,4)$ (Subfigure (a)) using the handle $h$ (shaded band in Subfigure (b)) yields the torus knot $T(3,2)$ (Subfigure (c)). A nonorientable band move resulting from placing the band between a pair of neighboring strands, is referred to as a {\em pinch move}.  }
\label{FigurePinchingMove}
\end{figure}
Batson \cite{Batson:2014} states (without proof, however see Lemma~\ref{LemmaOnTheProofOfBatsonsPinchingMoveFormula} for a justification) that the knot $T(r,s)$ obtained by a pinch move on $T(p,q)$ (with $p,q>0$) is (up to orientation) the knot $T(r,s)$ with  
\begin{align} \label{Definition Of (r,s) From (p,q)}
(r,s) =(|p-2t|, |q-2h|) 
\end{align}
where $t$ and $h$ are the integers uniquely determined by the requirements 
\begin{align} \label{EquationFormulasForTAndH}
t & \equiv -q^{-1}\,(\text{mod }p)\qquad\text{ and } \qquad   t\in \{0,\dots, p-1\},\cr
h & \equiv \phantom{-} p^{-1}\,(\text{mod }q) \qquad \text{ and } \qquad h\in \{0,\dots, q-1\}.
\end{align}
%

We shall symbolically write 
\begin{align}\label{PinchMoveParameters}
T(p,q) \stackrel{\eps}{\longrightarrow} T(r,s),
\end{align}
to indicate said pinch move from $T(p,q)$ to $T(r,s)$, where $\eps = Sign (p-2t)$, and we shall say that {\em $T(r,s)$ was obtained by an $\eps$-move from $T(p,q)$} (see Section \ref{SectionOnUndoingPinchingMoves} for a more in-depth discussion about pinch moves). 

As with any band move, a pinch move provides a cobordism between the two associated knots. In this case, the cobordism is from $T(p,q)$ to $T(r,s)$, and the cobordism can be embedded in $T^2 \times [-\delta, \delta]$ in $S^3$. Note that $0\le r<p$ and $0\le s<q$, showing that the knot $T(r,s)$ obtained from $T(p,q)$ is \lq\lq smaller.\rq\rq Hence, a finite sequence of pinch moves applied firs to $T(p,q)$, then to $T(r,s)$, and so on, will eventually result in the unknot $U$. Concatenating the cobordisms, we have a cobordism in $S^3$ between $T(p,q)$ and $U$. Along these lines, we make the following definition, borrowed from~\cite{Batson:2014}.

\begin{definition}[The Surface $F_{p,q}$] \label{DefinitionOfTheSurfaceFpq}
Let $p,q>1$ be relatively prime integers, where we take $q$ to be odd and $p>q$ if $p$ is also odd. Apply the minimum number $n$ of pinch moves necessary to the torus knot $T(p,q)$ so that the result is an unknot $T(p_0,1)$ for some $p_0 \ge 0$. To the associated cobordism from $T(p,q)$ to $T(p_0, 1)$, glue a disk along $T(p_0, 1)$ in $B^4$. Push the interior of the thus created surface into $B^4$ to produce a nonorientable surface $F_{p,q} \subset B^4$ with $\partial F_{p,q} = T(p,q)$ and with $b_1(F_{p,q}) =n$. 
\end{definition}

%
%
\subsection{A Nonorientable Analogue of Milnor's Conjecture}
With these preliminaries understood, the next conjecture is due to Batson \cite{Batson:2014}.
\begin{conjecture}[Nonorientable Analogue of Milnor's Conjecture] \label{NonorientableAnalogueOfMilnorsConjecture}
 \label{ConjectureNonorientableMilnorConjecture}
Let $p,q>1$ be relatively prime integers and let $n$ be the minimum number of pinch moves needed to reduce $T(p,q)$ to the unknot. Then $\gamma_4(T(p,q)) = n$, and the value of $\gamma_4(T(p,q))$ is realized by the first Betti number of the nonorientable surface $F_{p,q}$ from Definition \ref{DefinitionOfTheSurfaceFpq}. 
\end{conjecture}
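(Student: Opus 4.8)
The plan is to split the statement of Conjecture~\ref{NonorientableAnalogueOfMilnorsConjecture} into the two inequalities $\gamma_4(T(p,q)) \le n$ and $\gamma_4(T(p,q)) \ge n$, together with the realization claim. The upper bound is immediate from Definition~\ref{DefinitionOfTheSurfaceFpq}: the surface $F_{p,q}$ is nonorientable, properly embedded in $B^4$, has boundary $T(p,q)$, and satisfies $b_1(F_{p,q}) = n$, so by the very definition of the invariant we get $\gamma_4(T(p,q)) \le n$. Moreover, once the equality $\gamma_4(T(p,q)) = n$ is established, the realization statement is automatic, since $b_1(F_{p,q}) = n$. Thus the real content is the reverse inequality, and for this I would invoke the OSS bound \eqref{OSSBoundOnGamma4}. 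Since that bound already gives $|\upsilon(T(p,q)) - \tfrac12\sigma(T(p,q))| \le \gamma_4(T(p,q)) \le n$, the whole conjecture collapses to the single identity
\[
\bigl|\upsilon(T(p,q)) - \tfrac12\,\sigma(T(p,q))\bigr| = n,
\]
whose proof is the heart of the matter.

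To attack this identity I would first obtain an arithmetic handle on $n$. The pinch-move passage $(p,q)\mapsto(r,s)=(|p-2t|,|q-2h|)$ of \eqref{Definition Of (r,s) From (p,q)}, with $t,h$ fixed by the congruences \eqref{EquationFormulasForTAndH}, is a Euclidean-algorithm-type descent: each move strictly shrinks the pair, and the minimal number $n$ of moves reaching the unknot is encoded by the sequence of signs $\eps$ recorded in \eqref{PinchMoveParameters}. I would iterate this recursion and read off $n$ as a count tied to a (possibly negative) continued-fraction expansion of $p/q$, so that $n$ becomes a concrete number-theoretic function of $(p,q)$ on the families under consideration.

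Next I would compute the right-hand quantities along the same descent. The signature $\sigma(T(p,q))$ obeys the classical Gordon--Litherland--Murasugi-type recursions, and tracking these recursions over the pinch sequence that defines $n$ should produce closed formulas for $\sigma$ on the chosen families; this is precisely where the new signature formulas advertised in the abstract would emerge. For the upsilon invariant I would use the known recursive behaviour of $\upsilon$ for torus knots to pin down $\upsilon(T(p,q))$, or directly the combination $\upsilon-\tfrac12\sigma$, again organized along the descent. The aim of this step is to express all of $n$, $\sigma(T(p,q))$, and $\upsilon(T(p,q))$ in the same variables, reducing the target identity to a finite verification.

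The main obstacle is exactly this matching. Individually none of $n$, $\sigma(T(p,q))$, or $\upsilon(T(p,q))$ admits a clean uniform closed form for all coprime $(p,q)$: the signature is oscillatory, and extracting the correct value of the piecewise-linear invariant $\upsilon$ requires care. I therefore expect the identity to be provable only after restricting to families on which the pinch descent has a predictable sign pattern (for instance fixing $q$, or imposing congruence conditions on $p$ modulo $q$), where all three quantities simplify at once. Establishing $|\upsilon-\tfrac12\sigma| = n$ on each such family---and thereby certifying through \eqref{OSSBoundOnGamma4} that $F_{p,q}$ is genus-minimizing---is the genuine work, and the reason the conjecture is accessible for many infinite families rather than in complete generality: outside these families the OSS bound need not be sharp, so this approach alone does not settle the conjecture there.
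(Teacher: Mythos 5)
You are attempting to prove a statement that the paper never proves and in fact records as \emph{false}: it is a conjecture, and by Lobb's theorem \cite{Lobb} the torus knot $T(4,9)$ bounds a smoothly and properly embedded M\"obius band in $B^4$, so $\gamma_4(T(4,9))=1$ while the minimal number of pinch moves is $n=2$; Corollary \ref{CorollaryToLobbsResult} then manufactures infinitely many further counterexamples from this one. Consequently no proof of Conjecture \ref{NonorientableAnalogueOfMilnorsConjecture} as stated can exist, and your central reduction --- that the conjecture ``collapses to the single identity'' $\bigl|\upsilon(T(p,q))-\tfrac{1}{2}\sigma(T(p,q))\bigr| = n$ --- contains two errors. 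First, it is a logical overreach: the OSS bound \eqref{OSSBoundOnGamma4} is only one lower bound for $\gamma_4$, so the identity is \emph{sufficient} for the conjecture but not equivalent to it; the conjecture could in principle hold for a knot on which the OSS bound is slack. Second, and fatally, the identity is false for general coprime $(p,q)$: the paper's Theorems \ref{MainResultForP0Odd} and \ref{MainResultForP0Even} compute $\upsilon-\tfrac{1}{2}\sigma$ in closed form along the pinch sequence and show it takes every value from $0$ to $n$, equaling $n$ only under the precise congruence and sign conditions isolated in Theorem \ref{TheoremOnPartialSolutionOfNonorientableMilnorConjecture} (e.g.\ $m_k\equiv 2\ (\mathrm{mod}\ 4)$ for all $k$ and $q_1\equiv\eps_1\ (\mathrm{mod}\ 4)$ when $p$ is odd, or $\eps_k=1$ for all $k$ when $p$ is even).

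To your credit, the machinery you outline is exactly the paper's machinery for its \emph{partial} results: the upper bound $\gamma_4\le b_1(F_{p,q})=n$ from Definition \ref{DefinitionOfTheSurfaceFpq}, the OSS lower bound, the arithmetic of the pinch-move descent (Theorem \ref{TheoremOnReversingPinchingMoves}), and recursions for $\sigma$ (Gordon--Litherland--Murasugi) and for $\upsilon$ (Feller--Krcatovich) tracked along that descent; you even anticipate, correctly, that the identity holds only on restricted families and that the OSS bound need not be sharp elsewhere. The gap is that you frame the family restriction as a technical obstacle to be overcome en route to the full conjecture, when it is essential: outside those families the conjecture itself fails. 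What your strategy actually proves, when executed, is Theorem \ref{TheoremOnPartialSolutionOfNonorientableMilnorConjecture} and Corollary \ref{CorollaryAboutValiditiyOfNonOrientableMilnorConjecture} --- a characterization of the torus knots for which $\upsilon-\tfrac{1}{2}\sigma=n$ and hence $\gamma_4=n$ --- not Conjecture \ref{NonorientableAnalogueOfMilnorsConjecture}.
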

A recent result of Lobb's \cite{Lobb} shows that the full scope of this conjecture is not true.
\begin{theorem}[Lobb \cite{Lobb}, 2019]
Conjecture \ref{NonorientableAnalogueOfMilnorsConjecture} is false. Specifically, the torus knot $T(4,9)$ bounds a M\"obius band smoothly and properly embedded in the 4-ball, showing that $\gamma_4(T(4,9)) = 1$, while the value of $\gamma_4(T(4,9))$ predicted by Conjecture \ref{NonorientableAnalogueOfMilnorsConjecture} is 2. 
\end{theorem}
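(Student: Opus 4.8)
The plan is to verify the two numerical assertions independently: that Conjecture~\ref{NonorientableAnalogueOfMilnorsConjecture} predicts the value $2$ for $T(4,9)$, and that the true value of $\gamma_4(T(4,9))$ equals $1$.

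First I would compute the prediction of the conjecture, which is a purely arithmetic matter of iterating the pinch move. Taking $p=4$, $q=9$ in the convention of Definition~\ref{DefinitionOfTheSurfaceFpq} (legitimate since $q$ is odd), I would apply \eqref{Definition Of (r,s) From (p,q)} and \eqref{EquationFormulasForTAndH}. Because $9^{-1}\equiv 1 \pmod 4$ one gets $t=3$, and because $4^{-1}\equiv 7 \pmod 9$ one gets $h=7$, so that $(r,s)=(|4-6|,|9-14|)=(2,5)$. A second pinch move applied to $T(2,5)$ gives $t=1$ and $h=3$, hence $(r,s)=(0,1)$, the unknot. Since a single pinch move yields the nontrivial knot $T(2,5)$, the minimal number of pinch moves is $n=2$; thus $b_1(F_{4,9})=2$ and the conjecture predicts $\gamma_4(T(4,9))=2$.

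Next I would establish the lower bound $\gamma_4(T(4,9))\ge 1$, which is immediate: a knot with $\gamma_4=0$ bounds a smoothly embedded disk in $B^4$ and is therefore smoothly slice, whereas the nontrivial torus knot $T(4,9)$ is not slice (for instance, its signature is nonzero). It therefore remains only to prove the reverse inequality $\gamma_4(T(4,9))\le 1$.

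The heart of the argument, and the step I expect to be the main obstacle, is the explicit construction of a M\"obius band. A M\"obius band in $B^4$ admits a handle decomposition, read as a movie inward from $\partial B^4$, consisting of exactly one saddle followed by one minimum (so that $\chi=0$, with a single boundary component and one crosscap); consequently, bounding $T(4,9)$ by a M\"obius band is equivalent to exhibiting a single nonorientable band move that converts $T(4,9)$ into the unknot. The subtlety is that this band cannot be a pinch move: the pinch moves of Definition~\ref{DefinitionOfTheSurfaceFpq} require two steps, so a one-step reduction must use a band placed other than between neighboring strands of the torus. Concretely, I would isotope $T(4,9)$ into a convenient planar diagram, search for a single band whose surgery yields a diagram that collapses to the unknot, and certify that collapse by an explicit sequence of Reidemeister moves. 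Capping the resulting unknot with a disk and pushing the interior into $B^4$ then produces a M\"obius band with boundary $T(4,9)$, giving $\gamma_4(T(4,9))\le 1$ and hence equality. Finding the correct band is the genuinely creative part: nothing in the torus structure alone forces such a band to exist, which is exactly why $T(4,9)$ is a counterexample rather than a typical torus knot.
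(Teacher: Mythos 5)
Your proposal correctly settles everything except the one claim that carries all the mathematical content. The arithmetic is right: with $p=4$, $q=9$ one gets $t=3$, $h=7$, so a pinch move sends $T(4,9)$ to $T(2,5)$, and a second pinch sends $T(2,5)$ to $T(0,1)$; hence $n=2$ and the conjectural value is $2$, exactly as the paper asserts. The lower bound $\gamma_4(T(4,9))\ge 1$ is also fine (indeed, under this paper's definition $\gamma_4$ is a minimum over nonorientable surfaces only, so it is automatic). But the upper bound $\gamma_4(T(4,9))\le 1$ is precisely Lobb's theorem, and your proposal does not prove it: you reduce it to "find a single nonorientable band move taking $T(4,9)$ to the unknot" and then explicitly defer that step ("search for a band", "the genuinely creative part"). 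No band is exhibited, no diagram is produced, and no certification is given, so the proposal is a correct reduction plus a statement of intent, not a proof. For what it is worth, the paper itself offers no proof either: it quotes the result with a citation to Lobb, whose paper supplies exactly the explicit band move you would need, so there is no internal argument in the paper against which your construction could even be checked.

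One smaller point: your claimed equivalence, that bounding a smooth M\"obius band in $B^4$ is the same as admitting a single nonorientable band move to the unknot, is an overstatement. The direction you actually use is correct and standard: a nonorientable band move from $K$ to the unknot, capped with a disk and pushed into $B^4$, yields a M\"obius band with boundary $K$ (this is the same construction as Definition \ref{DefinitionOfTheSurfaceFpq} with $n=1$). The converse, however, would require isotoping an arbitrary M\"obius band into a movie with exactly one saddle and one minimum; a general properly embedded M\"obius band only satisfies $\chi=0$, i.e.\ (\#minima)$+$(\#maxima)$=$(\#saddles), and asserting it can always be normalized to one saddle and one minimum is a nonorientable analogue of a slice-versus-ribbon type claim that you have not justified. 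Since only the easy direction enters your argument, this does not damage the structure of the proof, but it should not be stated as an equivalence.
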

The failure of the torus knot $T(4,9)$ to comply with Conjecture \ref{NonorientableAnalogueOfMilnorsConjecture} can be used to construct infinitely many more counterexamples, something that has also been observed by Lobb \cite{Lobb2}.
\begin{corollary} \label{CorollaryToLobbsResult}
For every $n\ge 3$ there exist infinitely many torus knots for which the value of their nonorientable 4-genus $\gamma_4 $ predicted by Conjecture \ref{NonorientableAnalogueOfMilnorsConjecture} equals $n$, but for which the actual value of $\gamma_4$ is at most $n-1$.  
\end{corollary}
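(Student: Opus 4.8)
The plan is to bootstrap from Lobb's single counterexample $T(4,9)$ by prepending pinch moves, exploiting the Möbius band that $T(4,9)$ bounds to keep the true value of $\gamma_4$ strictly below the value predicted by Conjecture~\ref{NonorientableAnalogueOfMilnorsConjecture}. Suppose $T(p,q)$ can be reduced to $T(4,9)$ by a sequence of $k$ pinch moves. Concatenating the $k$ genus-one pinch cobordisms with the Möbius band bounded by $T(4,9)$ in $B^4$ and pushing the interior into $B^4$ produces, exactly as in Definition~\ref{DefinitionOfTheSurfaceFpq}, a nonorientable surface $F$ with $\partial F = T(p,q)$ and $b_1(F)=k+1$; hence $\gamma_4(T(p,q))\le k+1$. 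If I can simultaneously arrange that the value predicted by Conjecture~\ref{NonorientableAnalogueOfMilnorsConjecture} for $T(p,q)$ is exactly $n:=k+2$, then each such $T(p,q)$ is a counterexample realizing the corollary for the given $n\ge 3$.

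Establishing the predicted value splits into an easy upper bound and a harder lower bound. For the upper bound, I continue the reduction past $T(4,9)$: since $T(4,9)=T(9,4)$ reduces canonically by $T(9,4)\to T(5,2)\to U$ via \eqref{Definition Of (r,s) From (p,q)}, the predicted value of $T(4,9)$ is $2$, and prepending the $k$ moves exhibits a pinch reduction of $T(p,q)$ to the unknot of length $k+2$; thus the predicted value is at most $k+2$. The lower bound---that no pinch reduction of $T(p,q)$ is shorter---is the crux, and it cannot be supplied by the Heegaard Floer bounds \eqref{BatsonsBoundOnGamma4} or \eqref{OSSBoundOnGamma4}, since for these knots $\gamma_4(T(p,q))\le k+1$ lies strictly below the predicted value we are after. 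I would instead derive it from the combinatorics of the pinch move itself, by showing that the simplest pinch moves of \eqref{Definition Of (r,s) From (p,q)} are distance-minimizing, so that the canonical reduction computes the predicted value; this is exactly the sort of control over pinch distance developed in Section~\ref{SectionOnUndoingPinchingMoves}.

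It remains to produce, for each $k\ge 1$, infinitely many torus knots that reduce to $T(4,9)$ in exactly $k$ pinch moves. These arise by inverting the congruences \eqref{EquationFormulasForTAndH} defining a pinch move, i.e.\ by the undoing analysis of Section~\ref{SectionOnUndoingPinchingMoves}. Solving $(r,s)=(9,4)$ in \eqref{Definition Of (r,s) From (p,q)} already yields the infinite family $T(13,6),\,T(23,10),\,T(31,14),\,T(41,18),\,T(49,22),\dots$ of torus knots pinching directly onto $T(4,9)$, which settles $n=3$; since the pinch map is single-valued, preimages of distinct torus knots are disjoint, so iterating the undoing procedure on the members of this family propagates infinitude up the tree and yields, for every larger $k$, infinitely many torus knots reducing to $T(4,9)$ in exactly $k$ steps. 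I expect the main obstacle to be the number-theoretic heart shared by both halves of the argument: inverting the pinch-move formula infinitely often while retaining enough control on pinch distance to guarantee that the prepended reduction is genuinely optimal, so that the predicted value is exactly $n$ rather than merely at most $n$.
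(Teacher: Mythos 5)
Your overall route is the same as the paper's (Section~\ref{SectionAboutCounterexamples}): grow torus knots upward from $T(4,9)$ by undoing pinch moves, cap the concatenated pinch cobordisms with Lobb's M\"obius band to get $\gamma_4\le n-1$, and use the undoing analysis of Section~\ref{SectionOnUndoingPinchingMoves} (Theorem~\ref{TheoremOnReversingPinchingMoves}) to produce infinitely many such knots at every level. Your explicit family $T(6,13),\,T(10,23),\,T(14,31),\dots$ of knots pinching directly onto $T(4,9)$ is exactly the paper's parametrized family with bottom data $p_0=0$, $q_1=5$, $\eps_1=\eps_2=-1$, $m_1=2$ and with $m_2$ and $\eps_2\eps_3$ left free, so the constructive half of your argument is correct and matches the paper.

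The place where your write-up goes astray is the step you single out as ``the crux'': that no pinch reduction of $T(p,q)$ is shorter than the prepended one. This is not an obstacle, and the ``distance-minimizing'' property you propose to establish is not even a well-posed goal, because there are no competing pinch reductions to compare against. By Lemma~\ref{LemmaOnTheProofOfBatsonsPinchingMoveFormula}, a pinch move applied to $T(p,q)$ has a single possible output $T(|p-2t|,|q-2h|)$ (any two placements of the band between neighboring strands are related by the symmetry of the torus knot), so every torus knot carries a \emph{unique} pinch-reduction sequence, and the ``minimum number of pinch moves'' in Conjecture~\ref{NonorientableAnalogueOfMilnorsConjecture} is simply the length of that sequence. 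Consequently, if $T(p,q)$ pinches to $T(p',q')$, then its predicted value is exactly one more than that of $T(p',q')$; induction up your tower gives predicted value exactly $k+2=n$, with no minimality argument and no Floer-theoretic input. This single-valuedness is something you yourself invoke later (to see that preimages of distinct knots are disjoint), so your proposal is internally inconsistent on this point: once the pinch map is a function, your ``main obstacle'' dissolves. The clean way to finish is to cite the converse half of Theorem~\ref{TheoremOnReversingPinchingMoves}, which states precisely that the knot $T(p_n,q_n)$ built from the data $\{n,p_0,q_1,\{\eps_k\},\{m_k\}\}$ first reduces to the unknot after $n$ pinch moves; with that substitution in place of your final paragraph, your argument is complete and coincides with the paper's proof.
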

The counterexamples from the previous corollary all rely on Lobb's counterexample of $T(4,9)$, but it is not unlikely that there are other counterexamples entirely, and identifying them and calculating their values of $\gamma_4$ remains a challenge for future work. Our present work aims to show that despite not being true in complete generality, Conjecture \ref{NonorientableAnalogueOfMilnorsConjecture} nevertheless holds true for many torus knots, including for each $n\in \mathbb N$, for infinite families of torus knots with $\gamma_4$ equal to $n$.   

The construction of the surface $F_{p,q}$ shows that $\gamma_4(T(p,q)) \le b_1(F_{p,q})$, providing an upper bound on $\gamma_4(T(p,q))$. For all torus knots $T(p,q)$ where the surface $F_{p,q}$ is a M\"obius band, Conjecture~\ref{ConjectureNonorientableMilnorConjecture} is obviously true, for in that case the upper bound equals 1. Such torus knots are characterized by the next proposition. 
\begin{proposition}\label{TorusKnotsThatBoundMobiusBands}
The surface $F_{p,q}$ is a M\"obius band if and only if $T(p,q)$ is up to passing to its mirror knot, given as $T(qm\pm 2, q)$ where $q \ge 3$ is odd and $m\ge 0$. Hence
$$\gamma_4(T(qm\pm 2, q)) = 1.$$
\end{proposition}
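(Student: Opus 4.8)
The plan is to observe that $F_{p,q}$ is a M\"obius band precisely when its first Betti number equals $1$, which by Definition~\ref{DefinitionOfTheSurfaceFpq} happens exactly when the minimal number $n$ of pinch moves reducing $T(p,q)$ to an unknot equals $1$. Since $p,q>1$ makes $T(p,q)$ nontrivial, we always have $n\ge 1$, so the entire statement reduces to determining for which pairs $(p,q)$ a single pinch move carries $T(p,q)$ to an unknot. I would therefore work entirely with Batson's pinch move formula \eqref{Definition Of (r,s) From (p,q)}--\eqref{EquationFormulasForTAndH} (justified in Lemma~\ref{LemmaOnTheProofOfBatsonsPinchingMoveFormula}), which sends $T(p,q)$ to $T(r,s)$ with $r=|p-2t|$ and $s=|q-2h|$. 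Because a pinch move preserves coprimality of the parameters, $T(r,s)$ is an unknot if and only if $\min(r,s)\le 1$.

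Next I would analyze the two coordinates using the congruences defining $t$ and $h$. Since $q$ is odd, $s=|q-2h|$ is odd and hence never $0$; it equals $1$ exactly when $h=(q\pm 1)/2$. Feeding $h\equiv p^{-1}\pmod q$ into $(q+1)/2\equiv 2^{-1}$ and $(q-1)/2\equiv -2^{-1}\pmod q$ translates the condition $s=1$ into $p\equiv \pm 2 \pmod q$, i.e.\ $p=qm\pm 2$. This immediately yields the ``if'' direction: for $T(qm\pm 2,q)$ with $q\ge 3$ odd, the displayed value of $h$ produces $s=1$, so $T(r,1)$ is an unknot and $F_{p,q}$ is a M\"obius band.

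For the converse I would show these are the only possibilities by also examining $r=|p-2t|$, splitting on the parity of $p$. When $p$ is even, $r$ is even, so $\min(r,s)\le 1$ can only come from $r=0$; but $r=0$ forces $s=1$ by coprimality, returning us to the case already analyzed. When $p$ is odd (where the normalizing convention $p>q$ applies), $r$ is odd and $r=1$ corresponds to $t=(p\pm 1)/2$; substituting $t\equiv -q^{-1}\pmod p$ gives $q\equiv \pm 2\pmod p$, and with $1<q<p$ this forces $q=p-2$, which is itself the instance $p=q+2\equiv 2\pmod q$. Thus every way of reaching an unknot in a single pinch move is captured, up to mirroring and swapping the roles of $p$ and $q$, by $p=qm\pm 2$ with $q\ge 3$ odd and $m\ge 0$.

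Finally, having identified the family, the value $\gamma_4=1$ follows by sandwiching: the M\"obius band $F_{p,q}$ gives the upper bound $\gamma_4(T(p,q))\le b_1(F_{p,q})=1$, while $\gamma_4\ge 1$ because a nontrivial knot cannot bound a disk in $B^4$. I expect the main obstacle to be the bookkeeping in the converse direction---keeping careful track of the parities and of the normalizing conventions on $(p,q)$ (oddness of $q$, the choice $p>q$ in the odd--odd case, and passage to the mirror), so that the apparently separate $r=1$ solutions are correctly recognized as lying inside the single congruence class $p\equiv \pm 2\pmod q$ rather than generating spurious extra families.
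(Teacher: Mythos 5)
Your proof is correct and follows essentially the same route as the paper: both arguments reduce the proposition to characterizing exactly when a single pinch move produces an unknot, and both settle that question by the arithmetic of Batson's formula developed in Section~\ref{SectionOnUndoingPinchingMoves}. The only cosmetic difference is that you solve the congruences $h\equiv p^{-1}\ (\text{mod }q)$ and $t\equiv -q^{-1}\ (\text{mod }p)$ directly (splitting on parity to dispose of the residual cases $r\le 1$), whereas the paper routes the same computation through its lemmas --- parity preservation, Lemma~\ref{MagnitudeOfRandS}, and the identity $rq-sp=2\eps$ of Lemma~\ref{AuxiliaryLemmaOnTheRelationBetweenPQRAndS} --- to pin down the resulting unknot as $T(p_0,1)$ and deduce $p=p_0q-2\eps_1$.
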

%
%
%
For other torus knots, we make use of lower bounds on $\gamma_4$ to pinpoint the invariant's value. Going forward, our choice of lower bound for $\gamma_4(T(p,q))$ will be the OSS bound \eqref{OSSBoundOnGamma4}, leading to this double inequality:
\begin{equation} \label{EquationDoubleInequalityForGamma4}
|\upsilon (T(p,q)) - \textstyle \frac{1}{2}\sigma(T(p,q))| \le \gamma_4(T(p,q)) \le b_1(F_{p,q}).
\end{equation}
The purpose of this paper is to compute the OSS lower bound for all torus knots. The starting point for this computation is a series of pinch moves that convert $T(p,q)$ to the unknot. Thus, relying on notation from \eqref{PinchMoveParameters}, write 
\begin{equation} \label{EquationSequenceOfPinchingMovesFirst}
T(p,q) = T(p_n,q_n) \stackrel{\eps_n}{\longrightarrow} T(p_{n-1},q_{n-1}) \stackrel{\eps_{n-1}}{\longrightarrow}\dots \stackrel{\eps_2}{\longrightarrow} T(p_1,q_1) \stackrel{\eps_1}{\longrightarrow}T(p_0,1),
\end{equation}   
for integers $n, p_n,q_n,\dots, p_1,q_1\ge 1$, $p_0\ge 0$ and signs $\eps_1,\dots, \eps_n\in \{\pm 1\}$.  
We state our results by distinguishing the two cases of $p$ being odd or even.  
\begin{theorem} \label{MainResultForP0Odd}
Let $p,q>1$ be a pair of relatively prime odd integers with $p>q$, and let $n$, $\{p_k\}_{k=0}^n$, $\{q_k\}_{k=1}^n$ and $\{\eps_k\}_{k=1}^n$ be as in \eqref{EquationSequenceOfPinchingMovesFirst}. For $k=1,\dots, n-1$, let 
$$m_k = (p_{k+1}+\eps_{k}\eps_{k+1}p_{k-1})/p_k.$$
Then each $m_k$ is an even integer. If $n=1$, let $\mathcal I=\emptyset$, and if $n\ge 2$ let  
$$\mathcal I = \{k\in \{1,\dots, n-1\}\,|\, m_k\equiv 0\,(\text{mod }4)\},$$ 
and write $\mathcal I=\{k_{1}, k_{2}, \dots, k_{\ell}\}$ with $k_{1}<k_{2}<\dots<k_{\ell}$, if $\mathcal I \ne \emptyset$. Then 
$$\textstyle (\upsilon -\frac{1}{2}\sigma)(T(p,q)) = \left\{
\begin{array}{rl}
n - \sum _{k_i\in \mathcal I} (-1)^{i-1} (n-k_{i}) & \quad ; \quad q_1-\eps_1 \equiv 0\,(\text{mod }4), \cr & \cr
\sum _{k_i\in \mathcal I} (-1)^{i-1} (n-k_{i}) & \quad ; \quad q_1-\eps_1 \equiv 2\,(\text{mod }4).
\end{array}
\right.$$
The sum appearing on the right-hand sides above equals zero if $\mathcal I = \emptyset$. 
\end{theorem}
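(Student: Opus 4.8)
The plan is to exploit the fact that $\phi := \upsilon - \frac{1}{2}\sigma$ vanishes on the unknot and to propagate its value backwards along the sequence of pinch moves in \eqref{EquationSequenceOfPinchingMovesFirst}. Writing $\delta_k = \phi(T(p_k,q_k)) - \phi(T(p_{k-1},q_{k-1}))$ for the change in $\phi$ across the $k$-th pinch move, the telescoping identity gives $(\upsilon - \frac{1}{2}\sigma)(T(p,q)) = \sum_{k=1}^n \delta_k$, since $T(p_0,1)$ is unknotted and $\phi$ vanishes there. The whole theorem then reduces to two tasks: (i) determine each $\delta_k$ from the local pinch-move data, and (ii) resum the resulting sequence into the claimed alternating-sum closed form.

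For (i) I would treat the signature and upsilon parts separately. The change $\sigma(T(p_k,q_k)) - \sigma(T(p_{k-1},q_{k-1}))$ I would pin down via a recursion for the signature of torus knots, either Murasugi's classical recursion or a direct lattice-point count on the rectangle $\{1,\dots,p_k-1\}\times\{1,\dots,q_k-1\}$, expressed through the substitution $(p_{k-1},q_{k-1}) = (|p_k - 2t|, |q_k - 2h|)$ of \eqref{Definition Of (r,s) From (p,q)}; this is precisely the computation that should yield the advertised new closed formulas for the signature. The change in $\upsilon = \Upsilon(1)$ I would control using a recursive description of $\Upsilon_{T(p,q)}$ (in the spirit of Feller--Krcatovich) together with the OSS cobordism inequality \eqref{OSSBoundOnGamma4}, which already forces $|\delta_k|\le 1$ because a single pinch move is a nonorientable band, i.e. a cobordism of first Betti number $1$. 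The goal of this step is to show $\delta_k \in \{0,1\}$ and to identify exactly when $\delta_k = 1$ in terms of the sign $\eps_k$ and a residue mod $4$.

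The three-term structure of the sequence is what ties everything together, so before (ii) I would establish the recurrence $p_{k+1} = m_k p_k - \eps_k\eps_{k+1}p_{k-1}$ hidden in the definition of $m_k$. Combining the two defining congruences in \eqref{EquationFormulasForTAndH} for consecutive moves shows $m_k$ is an integer; since $p$ and $q$ are odd, every $p_k$ and $q_k$ inherits odd parity, so $p_{k+1}+\eps_k\eps_{k+1}p_{k-1}$ is even while $p_k$ is odd, forcing $m_k$ to be an even integer as claimed. The residue $m_k \bmod 4$ should then record whether $\eps_{k+1}$ reproduces or reverses the local behavior of the invariant seen at step $k$, which is exactly what makes the indicator of ``$\delta_k = 1$'' flip precisely at the indices $k \in \mathcal I$.

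With $\delta_k\in\{0,1\}$ and its flips located at $\mathcal I$, step (ii) is bookkeeping: the initial value of the indicator is set by $q_1 - \eps_1 \bmod 4$ (the two cases of the theorem), and each element $k_i$ of $\mathcal I$ toggles it, so the set of indices on which $\delta_k=1$ is a union of the blocks delimited by the $k_i$. Summing their lengths collapses, via the elementary identity $(n-k_1)-(n-k_2)+\cdots = (k_2-k_1)+(k_4-k_3)+\cdots$, into $\sum_i(-1)^{i-1}(n-k_i)$, or into its complement $n-\sum_i(-1)^{i-1}(n-k_i)$, according to the starting value. I expect the main obstacle to be step (i) for the upsilon term: reconciling the pinch-move reduction $(|p-2t|,|q-2h|)$ with the reduction underlying the known $\Upsilon$-recursions, and extracting the exact value of $\delta_k$ (rather than merely the bound $|\delta_k|\le 1$) as a clean function of $\eps_k$ and $m_k \bmod 4$. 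The signature half and the final resummation are comparatively mechanical.
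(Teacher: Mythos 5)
Your reformulation of the theorem is correct: setting $\phi=\upsilon-\frac12\sigma$ and $\delta_k=\phi(T(p_k,q_k))-\phi(T(p_{k-1},q_{k-1}))$, the statement is indeed equivalent to the assertion that $\delta_k\in\{0,1\}$, that its initial value is set by $q_1-\eps_1\pmod 4$, and that it toggles exactly when $m_k\equiv 0\pmod 4$; your resummation of this toggle pattern into the alternating sum, and your parity argument that $m_k$ is even, are both fine. The problem is that everything you have written reduces the theorem to the determination of the $\delta_k$, and for that step you offer no workable method --- you yourself flag it as ``the main obstacle,'' and it is not a technical loose end but the entire content of the theorem. The difficulty is structural: the recursion for $\upsilon$ coming from Feller--Krcatovich (Lemma \ref{LemmaRecursionFormulaForUpsilon}, equation \eqref{EquationRecursionFormulaForUpsilon}) and the Gordon--Litherland--Murasugi recursion \eqref{EquationRecursiveFormulaForSignature} for $\sigma$ are Euclidean-algorithm-type reductions in $(a,b)$, and they do not interact in any controllable way with the pinch-move substitution \eqref{Definition Of (r,s) From (p,q)} relating $(p_k,q_k)$ to $(p_{k-1},q_{k-1})$. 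The authors state explicitly (Section \ref{SectionOnComputingTheUpsilonInvariant}, after Lemma \ref{LemmaRecursionFormulaForUpsilon}) that they were unable to compute $\upsilon(T(p_k,q_k))$ from $\upsilon(T(p_{k-1},q_{k-1}))$ --- i.e.\ unable to run exactly the step-by-step scheme you propose --- and this failure is why they introduce the doubly indexed sequence $\rho_{k,n}$ of \eqref{DefinitionOfRhoKCommaN}. Their proof then proceeds by a three-stage reduction (Propositions \ref{PropositionForInitialReductionOfUpsilonMinusHalfOfTheSignature}, \ref{PropositionSecondReductionStepForTheCaseOfP0BeingOdd}, \ref{PropositionThirdReductionStepForTheCaseOfP0BeingOdd}) whose recursion lives on auxiliary torus knots $T(\rho_{k,n}+\rho_{k+1,n},\rho_{k,n}-\rho_{k+1,n})$ with $n$ fixed, not on the pinch sequence itself, and the alternating sum arises from sign flips $\phi\mapsto-\phi+(n-k)$ in that auxiliary recursion; your toggle rule is the integrated shadow of this, but it is an output of the proof, not an input one can verify locally.

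A secondary but genuine error: the claim that the OSS bound \eqref{OSSBoundOnGamma4} ``already forces $|\delta_k|\le 1$'' does not follow. Inequality \eqref{OSSBoundOnGamma4} bounds $|\phi(K)|$ by $\gamma_4(K)$ for a single knot; to apply it to $\delta_k=\phi\bigl(T(p_k,q_k)\,\#\,{-T(p_{k-1},q_{k-1})}\bigr)$ you must convert the $b_1=1$ pinch cobordism into a nonorientable surface in $B^4$ bounding the connected sum, and the standard tubing construction yields a surface with $b_1=2$, hence only $|\delta_k|\le 2$. A sharper cobordism-type inequality would have to carry the normal Euler number correction, and the Euler number of a pinch cobordism is precisely what is large and nonzero for torus knots (this is the engine behind Batson's bound \eqref{BatsonsBoundOnGamma4}), so it does not cancel for free. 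This gap is less serious only because even a correct bound $|\delta_k|\le 1$ would leave the real task --- the exact evaluation of $\delta_k$ --- untouched.
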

\begin{theorem}  \label{MainResultForP0Even}
Let $p,q>1$ be a pair of relatively prime integers of which $q$ is odd and $p$ is even, and let $n$, $\{p_k\}_{k=0}^n$, $\{q_k\}_{k=1}^n$ and $\{\eps_k\}_{k=1}^n$ be as in \eqref{EquationSequenceOfPinchingMovesFirst}. If $n=1$, then 
$$\textstyle (\upsilon -\frac{1}{2}\sigma)(T(p,q)) = \left\{
\begin{array}{cl}
1 & \quad ; \quad \eps_1=1, \cr
0 & \quad ; \quad \eps_1=-1.
\end{array}
\right.$$
If $n=2$ let $\mathcal J=\emptyset$, and for $n\ge 3$ define the set $\mathcal J$ as   
$$\mathcal J = \{k\in \{2,\dots, n-1\}\,|\, \eps_{k}\eps_{k+1} = -1\}$$ 
and write $\mathcal J=\{k_{1}, k_{2}, \dots, k_{\ell}\}$ with $k_{1}<k_{2}<\dots<k_{\ell}$, if $\mathcal J \ne \emptyset$. Then 
$$\textstyle (\upsilon -\frac{1}{2}\sigma)(T(p,q)) = \left\{
\begin{array}{rll}
n - \sum _{k_j\in \mathcal J} (-1)^{j-1} (n-k_{j}) & \quad ; \quad \eps_1=1, &\eps_2=1, \cr
1 +\sum _{k_j\in \mathcal J} (-1)^{j-1} (n-k_{j}) & \quad ; \quad \eps_1=1, &\eps_2=-1, \cr
(n-1) - \sum _{k_j\in \mathcal J} (-1)^{j-1} (n-k_{j}) & \quad ; \quad \eps_1=-1, &\eps_2=1, \cr
\sum _{k_j\in \mathcal J} (-1)^{j-1} (n-k_{j}) & \quad ; \quad \eps_1=-1, &\eps_2=-1. \cr
\end{array}
\right.$$
The sum appearing on the right-hand sides above equals zero if $\mathcal J = \emptyset$. 
\end{theorem}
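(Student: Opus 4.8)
The plan is to compute $(\upsilon-\frac12\sigma)(T(p,q))$ by following the invariant along the pinch-move sequence \eqref{EquationSequenceOfPinchingMovesFirst}, reducing the global formula to the analysis of a single pinch move together with a combinatorial resummation. Writing $\Phi(K)=\upsilon(K)-\frac12\sigma(K)$ and $\Phi_k=\Phi(T(p_k,q_k))$, one has $\Phi(T(p,q))=\Phi_n$ and $\Phi_0=\Phi(T(p_0,1))=0$ since $T(p_0,1)$ is the unknot. A single pinch move is one nonorientable band, so $T(p_k,q_k)\#\overline{T(p_{k-1},q_{k-1})}$ bounds a M\"obius band in $B^4$; the OSS bound \eqref{OSSBoundOnGamma4} then gives $|\Phi_k-\Phi_{k-1}|\le 1$. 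The whole theorem therefore comes down to (i) determining each increment $\Phi_k-\Phi_{k-1}$ exactly, and (ii) summing.

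I would compute the two pieces $\sigma$ and $\upsilon$ separately along the sequence. For the signature, a pinch move of sign $\eps_k$ attaches a band of a definite sign, and I would derive a closed recursion for $\sigma(T(p_k,q_k))-\sigma(T(p_{k-1},q_{k-1}))$; telescoping then yields the closed signature formula advertised in the abstract, and it is here that the relevant parities first enter — the class of $q_1-\eps_1$ modulo $4$ when $p$ is odd, and the sign products $\eps_k\eps_{k+1}$ when $p$ is even. The $\upsilon$-term is more delicate: because $\upsilon=\Upsilon(1)$ sits at the center of symmetry $t=1$ of the piecewise-linear graph of $\Upsilon_{T(p,q)}$, the natural object to propagate is not the value alone but the pair consisting of $\Upsilon_{T(p_k,q_k)}(1)$ together with the one-sided slope of $\Upsilon_{T(p_k,q_k)}$ at $t=1$. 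Using the explicit description of $\Upsilon$ for torus knots, I would show that across a pinch move the value increments by the current slope while the slope itself flips exactly when $m_k\equiv 0\,(\text{mod }4)$ (respectively, for $p$ even, when $\eps_k\eps_{k+1}=-1$). This is precisely what makes the per-step change of $\Phi$ depend on the history of earlier steps rather than on step $k$ alone.

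Granting the increment analysis, assembling the closed formula is a bookkeeping computation. In the odd case each increment $\Phi_k-\Phi_{k-1}$ lies in $\{0,1\}$ and \emph{toggles} between the two values precisely at the indices collected in $\mathcal I$, with the value on the first step fixed by $q_1-\eps_1\bmod 4$; writing $j_k$ for the number of elements of $\mathcal I$ strictly less than $k$, the increment at step $k$ equals $1$ exactly when $j_k$ is even, so $\Phi_n$ counts the indices $k\in\{1,\dots,n\}$ with $j_k$ even. A direct manipulation, or an induction on $|\mathcal I|$ peeling off the largest index $k_\ell$, rewrites this count as $n-\sum_{k_i\in\mathcal I}(-1)^{i-1}(n-k_i)$, matching the first case of Theorem~\ref{MainResultForP0Odd}; the complementary starting value gives the second case. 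The even-$p$ statement follows in the same way once the contribution of the first one or two steps is isolated according to the pair $(\eps_1,\eps_2)$ — this produces the four base values $n$, $n-1$, $1$, $0$ — after which the remaining increments toggle at the indices of $\mathcal J$.

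The hardest part will be the exact per-step behavior of $\upsilon$, namely pinning down both the value and the one-sided slope of $\Upsilon_{T(p,q)}$ at $t=1$ and tracking how the reparametrization $(p,q)\mapsto(|p-2t|,|q-2h|)$ of \eqref{Definition Of (r,s) From (p,q)} moves the breakpoints of the graph; proving that the slope flips precisely under the congruence $m_k\equiv 0\,(\text{mod }4)$ is the technical crux on which the entire alternating-sum structure rests. A secondary obstacle is verifying that the parity conditions arising independently from the signature recursion and from the $\Upsilon$-slope interlock to produce a single toggle, and that the small cases $n=1,2$ correctly seed the induction in both theorems.
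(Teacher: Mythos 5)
Your reduction of the theorem to a per-step increment analysis is sound as bookkeeping: if one knew each difference $\Phi_k-\Phi_{k-1}$ exactly, the toggle-and-resum argument you sketch does recover the four alternating-sum formulas (in the case $\eps_1=\eps_2=1$, for instance, the increment at step $k$ equals $1$ precisely when the number of sign-change indices among $\{2,\dots,k-1\}$ is even, and summing this count gives $n-\sum_{k_j\in\mathcal J}(-1)^{j-1}(n-k_j)$). The genuine gap is that you never compute the increments, and the mechanism you propose for doing so is unsubstantiated and fails on examples. You claim that across a pinch move ``the value increments by the current slope'' of $\Upsilon$ at $t=1$. Test this on Batson's family $T(p_k,q_k)=T(2k+2,2k+1)$ (all $m_k=2$, all $\eps_k=1$): by Lemma \ref{LemmaRecursionFormulaForUpsilon} one has $\upsilon(T(2k+2,2k+1))=-k(k+1)$, so the $\upsilon$-increment at step $k$ is $-2k$, whereas the one-sided slope of $\Upsilon_{T(2k+1,2k+2)}$ at $t=1$ vanishes for every $k$, since in \eqref{FellerKrcatovichRecursiveFormulas} the coefficient $n-1-2i$ is zero for $n=2k+1$ odd and $i=k$. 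So the increment is not the slope, and nothing in your outline identifies what it actually is, nor why the slope (or any state you track) should flip exactly when $m_k\equiv 0\ (\mathrm{mod}\ 4)$ or $\eps_k\eps_{k+1}=-1$. The signature half has the same defect: by Corollary \ref{CorollaryClosedSignatureFormulas} the differences $\sigma(T(p_k,q_k))-\sigma(T(p_{k-1},q_{k-1}))$ are history-dependent (their sign pattern depends on the parity of the number of indices $j<k$ with $\eps_j\eps_{j+1}=-1$), so no recursion in data local to step $k$ can exist, and you do not supply the state variable that would legitimize telescoping. Note also that your assertion that the increments lie in $\{0,1\}$ rather than $\{-1,0,1\}$ is itself a consequence of the theorem, not of the OSS bound \eqref{OSSBoundOnGamma4}.

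This difficulty is not incidental: the paper says explicitly, just after Lemma \ref{LemmaRecursionFormulaForUpsilon}, that the authors could not relate $\upsilon(T(p_k,q_k))$ to $\upsilon(T(p_{k-1},q_{k-1}))$ across a single pinch move, because the recursion \eqref{EquationRecursionFormulaForUpsilon} is structurally incompatible with the pinch-move relation \eqref{Definition Of (r,s) From (p,q)}. The actual proof circumvents the problem by introducing the doubly indexed sequence $\rho_{k,n}$ of \eqref{DefinitionOfRhoKCommaN} and running a three-stage reduction (Proposition \ref{PropositionForInitialReductionOfUpsilonMinusHalfOfTheSignature}, then Propositions \ref{PropositionSecondReductionStepForTheCaseOfP0BeingEven} and \ref{PropositionSecondReductionStepForTheCaseOfP0BeingZero}, then Proposition \ref{PropositionThirdReductionStepForTheCaseOfP0BeingEven}), in which the Gordon--Litherland--Murasugi signature recursion and the $\upsilon$ recursion can be applied in parallel; the induction runs over the index $k$ of $\rho_{k,n}$, not over the pinch-move index. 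Your proposal reproduces the combinatorial surface of Theorem \ref{MainResultForP0Even} but leaves its analytic core unproved.
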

It will be shown in Section \ref{SectionWithProofsOfMainTheorems} (Remarks \ref{FirstRemarkAboutTheValueOfTheSumFromMainTheorem} and \ref{SecondRemarkAboutTheValueOfTheSumFromMainTheorem}) that the sums appearing in Theorems \ref{MainResultForP0Odd} and \ref{MainResultForP0Even}, attain values between $0$ and $n-1$, leading to this result.   
\begin{corollary}
Let $T(p,q)$ be any torus knot with $p, q$ nonnegative, then  
$$ \upsilon (T(p,q)) \ge \textstyle \frac{1}{2}\sigma (T(p,q)).$$
\end{corollary}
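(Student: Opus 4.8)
The plan is to read off the sign of $(\upsilon - \frac{1}{2}\sigma)(T(p,q))$ directly from the closed formulas in Theorems \ref{MainResultForP0Odd} and \ref{MainResultForP0Even}, after first reducing an arbitrary torus knot with nonnegative parameters to the normalized form those theorems require. Since $T(p,q)$ is a knot precisely when $\gcd(p,q)=1$, and since $T(p,1)$, $T(1,q)$ and $T(0,1)$ are all the unknot — for which $\upsilon = \sigma = 0$, so the inequality is the trivial $0 \ge 0$ — it suffices to treat $p,q>1$ coprime. Using the identity $T(p,q)=T(q,p)$, which leaves both $\upsilon$ and $\sigma$ unchanged, I can always arrange the hypotheses of the two theorems: coprimality forces at most one parameter to be even, so if exactly one of $p,q$ is even I take that one to be $p$ (Theorem \ref{MainResultForP0Even}), and if both are odd I take $p>q$ (Theorem \ref{MainResultForP0Odd}). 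One of these cases therefore always applies.

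The heart of the argument is the bound $0 \le S \le n-1$ on the alternating sums $S = \sum_{i=1}^{\ell}(-1)^{i-1}(n-k_{i})$, where $\mathcal I$ or $\mathcal J$ supplies the indices $k_1 < \dots < k_\ell$ drawn from $\{1,\dots,n-1\}$; this is exactly the content of Remarks \ref{FirstRemarkAboutTheValueOfTheSumFromMainTheorem} and \ref{SecondRemarkAboutTheValueOfTheSumFromMainTheorem}. I would obtain it by writing $a_i = n-k_i$, so that $n-1 \ge a_1 > a_2 > \cdots > a_\ell \ge 1$ is a strictly decreasing sequence of positive integers. Pairing consecutive terms as $(a_1-a_2)+(a_3-a_4)+\cdots$ shows $S \ge 0$, since each pair is positive and any leftover term is positive; the regrouping $a_1 - (a_2-a_3) - (a_4-a_5) - \cdots \le a_1 \le n-1$ gives the upper bound. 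The empty sum (when $\mathcal I$ or $\mathcal J$ is empty) equals $0$, consistent with both bounds.

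Finally I would check each branch of the two theorems against these inequalities. In every case the expression for $(\upsilon-\frac{1}{2}\sigma)(T(p,q))$ is one of $S$, $\,1+S$, $\,(n-1)-S$, or $n-S$: the first two are $\ge 0$ because $S \ge 0$, and the last two are $\ge 0$ because $S \le n-1$ (indeed $n-S \ge 1$). The $n=1$ sub-case of Theorem \ref{MainResultForP0Even} evaluates to $1$ or $0$, both nonnegative. Hence $(\upsilon-\frac{1}{2}\sigma)(T(p,q)) \ge 0$ in all cases, which is the assertion $\upsilon(T(p,q)) \ge \frac{1}{2}\sigma(T(p,q))$.

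I expect no serious obstacle once the two theorems and the sum bound are in hand, as the corollary reduces to a finite case check. The only point demanding genuine care is the normalization step: one must confirm that interchanging $p$ and $q$ leaves $\upsilon-\frac{1}{2}\sigma$ unchanged, and that restricting to $p,q \ge 0$ is essential — the mirror knots (with a negative parameter) satisfy the reverse inequality, since both $\upsilon$ and $\sigma$ change sign under reflection, so the hypothesis of nonnegative parameters cannot be dropped.
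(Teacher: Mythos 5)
Your proposal is correct and follows essentially the same route as the paper: the paper likewise derives the corollary directly from Theorems \ref{MainResultForP0Odd} and \ref{MainResultForP0Even}, with the key input being the bounds $0 \le \sum_{i}(-1)^{i-1}(n-k_i) \le n-1$ established in Remarks \ref{FirstRemarkAboutTheValueOfTheSumFromMainTheorem} and \ref{SecondRemarkAboutTheValueOfTheSumFromMainTheorem}, which your pairing argument reproves correctly. Your explicit normalization step ($T(p,q)=T(q,p)$, unknot cases) and the mirror-knot remark are sensible additions that the paper leaves implicit, but they do not change the substance of the argument.
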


The next theorem and its corollary, both of which are a direct consequence of Theorems \ref{MainResultForP0Odd}, \ref{MainResultForP0Even} and the OSS bound \eqref{OSSBoundOnGamma4}, add infinitely many new families of torus knots to the list of knots for which Conjecture~\ref{ConjectureNonorientableMilnorConjecture} is true, including for every $n\in \mathbb N$, infinite families of torus knots with $\gamma_4$ equal to $n$. 
\begin{theorem} \label{TheoremOnPartialSolutionOfNonorientableMilnorConjecture}
Let $p,q>1$ be a pair of relatively prime integers of which $q$ is odd. Let $n$, $\{p_k\}_{k=0}^n$, $\{q_k\}_{k=1}^n$ and  $\{\eps_k\}_{k=1}^n$ be as in \eqref{EquationSequenceOfPinchingMovesFirst}. For $k=1,\dots, n-1$ set $m_k = (p_{k+1}+\eps_{k}\eps_{k+1}p_{k-1})/p_k$ as in Theorem \ref{MainResultForP0Odd}, and recall that $m_k$ is an even integer. 
\begin{itemize}
\item[(a)] If $p$ is odd and $p>q$ then 
$$\upsilon (T(p,q)) -\textstyle \frac{1}{2}\sigma (T(p,q)) = n$$
if and only if $q_1 \equiv \eps_1 \,(\text{mod }4)$ and $m_k\equiv 2\,(\text{mod }4)$ for all $k=1,\dots, n-1$. For any such torus knot $T(p,q)$ one obtains $\gamma_4(T(p,q)) = n$.
\vskip2mm
\item[(b)] If $p$ is even then 
$$\upsilon (T(p,q)) -\textstyle \frac{1}{2}\sigma (T(p,q)) = n$$
if and only if $\eps_k=1$ for all $k=1,\dots, n$. For any such torus knot $T(p,q)$ one obtains $\gamma_4(T(p,q)) = n$.
\end{itemize}
\end{theorem}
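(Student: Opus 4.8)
The plan is to treat this theorem as a direct corollary of the explicit formulas in Theorems~\ref{MainResultForP0Odd} and~\ref{MainResultForP0Even}: I would read off from those formulas precisely when the quantity $(\upsilon-\frac12\sigma)(T(p,q))$ attains its maximal possible value $n$, and then combine the resulting characterization with the OSS bound~\eqref{OSSBoundOnGamma4} and the upper bound $b_1(F_{p,q})=n$ to pin down $\gamma_4$. The whole argument reduces to an analysis of the alternating sums appearing on the right-hand sides of the two main theorems.

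The key observation is an elementary fact about those sums. In both theorems the relevant sum has the shape $\sum_{i=1}^{\ell}(-1)^{i-1}(n-k_i)$ with indices $k_1<k_2<\dots<k_\ell$ drawn from $\{1,\dots,n-1\}$ (case (a)) or from $\{2,\dots,n-1\}$ (case (b)). Setting $a_i=n-k_i$, this is an alternating sum $a_1-a_2+a_3-\cdots$ of a strictly decreasing sequence of positive integers. Grouping consecutive terms shows that such a sum is nonnegative, is bounded above by $a_1\le n-1$, and, crucially, is strictly positive as soon as $\ell\ge 1$. Hence the sum equals $0$ if and only if the index set ($\mathcal I$, respectively $\mathcal J$) is empty. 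This is exactly the content of Remarks~\ref{FirstRemarkAboutTheValueOfTheSumFromMainTheorem} and~\ref{SecondRemarkAboutTheValueOfTheSumFromMainTheorem}, which I would invoke here.

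With this in hand, part (a) follows by inspecting the two cases of Theorem~\ref{MainResultForP0Odd}. When $q_1-\eps_1\equiv 2\,(\mathrm{mod}\,4)$ the value is the sum itself, which is at most $n-1<n$, so this case never yields $n$. When $q_1\equiv\eps_1\,(\mathrm{mod}\,4)$ the value is $n$ minus the sum, so it equals $n$ exactly when the sum vanishes, i.e.\ exactly when $\mathcal I=\emptyset$; since each $m_k$ is even, $\mathcal I=\emptyset$ is equivalent to $m_k\equiv 2\,(\mathrm{mod}\,4)$ for all $k$, giving the stated equivalence. For part (b) I would run the same analysis through the four cases of Theorem~\ref{MainResultForP0Even}: only the case $\eps_1=\eps_2=1$ has constant term $n$, and there the value is $n$ iff the sum vanishes, iff $\mathcal J=\emptyset$; since $\mathcal J=\emptyset$ forces $\eps_k=\eps_{k+1}$ for every $k\in\{2,\dots,n-1\}$, together with $\eps_1=\eps_2=1$ this is equivalent to $\eps_k=1$ for all $k$. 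The remaining three cases have constant term at most $n-1$ and so cannot reach $n$; the degenerate cases $n=1,2$ (where $\mathcal J=\emptyset$ by convention) are verified by hand and conform to the same statement.

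Finally, for the value of $\gamma_4$: whenever the characterization holds we have $\upsilon-\frac12\sigma=n$, so the OSS bound~\eqref{OSSBoundOnGamma4} gives $\gamma_4(T(p,q))\ge n$. On the other hand the sequence~\eqref{EquationSequenceOfPinchingMovesFirst} of $n$ pinch moves, capped with a disk as in Definition~\ref{DefinitionOfTheSurfaceFpq}, produces a surface with $b_1=n$ and boundary $T(p,q)$, whence $\gamma_4(T(p,q))\le n$; combining the two inequalities yields $\gamma_4(T(p,q))=n$. I expect the only genuine obstacle to be the sharp control of the alternating sums, namely that they lie in $[0,n-1]$ and vanish precisely for empty index sets; but since this is already established in the cited remarks, the theorem is then careful bookkeeping across the cases, with the small-$n$ situations in part (b) the one place demanding separate verification.
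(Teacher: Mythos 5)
Your proposal is correct and follows essentially the same route as the paper's own proof: both arguments read the characterization off Theorems~\ref{MainResultForP0Odd} and~\ref{MainResultForP0Even} by showing the alternating sums are strictly positive when the index set is nonempty and bounded above by $n-1$ (respectively $n-2$, since $k_1\ge 2$ in case (b)), and then obtain $\gamma_4(T(p,q))=n$ by combining the OSS lower bound~\eqref{OSSBoundOnGamma4} with the upper bound $b_1(F_{p,q})=n$. The only point to state carefully is that in case (b) the exclusion of the subcase $\eps_1=1,\ \eps_2=-1$ needs the sharper bound $\sum_{k_j\in\mathcal J}(-1)^{j-1}(n-k_j)\le n-2$, which your observation that the indices lie in $\{2,\dots,n-1\}$ already supplies.
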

\begin{corollary} \label{CorollaryAboutValiditiyOfNonOrientableMilnorConjecture}
Let $T(p,q)$ be any torus knot satisfying the conditions from part (a) or part (b) from Theorem \ref{TheoremOnPartialSolutionOfNonorientableMilnorConjecture}. Then $T(p,q)$ has nonorientable 4-genus $\gamma_4$ as predicted by the Nonorientable Analogue of the Milnor Conjecture \ref{NonorientableAnalogueOfMilnorsConjecture}. Specifically, for any such torus knot $T(p,q)$ one obtains 
$$\gamma_4(T(p,q)) = b_1(F_{p,q})$$
with $F_{p,q}$ the nonorientable surface from Definition \ref{DefinitionOfTheSurfaceFpq}. Additionally, for any $m\in \mathbb N$ it follows that 
$$\gamma_4(\#^mT(p,q)) = m\cdot  \gamma_4(T(p,q)).$$
\end{corollary}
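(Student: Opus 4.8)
The plan is to squeeze $\gamma_4(T(p,q))$ between the OSS lower bound and the explicit upper bound coming from the surface $F_{p,q}$, and then to upgrade the resulting equality to connected sums by exploiting the additivity of the relevant invariants. Since the substantive computation has already been carried out in Theorem \ref{TheoremOnPartialSolutionOfNonorientableMilnorConjecture}, the corollary amounts to assembling that input with the double inequality \eqref{EquationDoubleInequalityForGamma4}.

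First I would treat the single-knot equality. By hypothesis $T(p,q)$ satisfies the conditions of part (a) or part (b) of Theorem \ref{TheoremOnPartialSolutionOfNonorientableMilnorConjecture}, so $\upsilon(T(p,q)) - \tfrac{1}{2}\sigma(T(p,q)) = n$, where $n$ is the length of the pinch sequence \eqref{EquationSequenceOfPinchingMovesFirst}. As $n \ge 0$, this gives $|\upsilon - \tfrac{1}{2}\sigma|(T(p,q)) = n$, and the OSS bound \eqref{OSSBoundOnGamma4} yields $n \le \gamma_4(T(p,q))$. On the other hand, the surface $F_{p,q}$ of Definition \ref{DefinitionOfTheSurfaceFpq} is built from exactly $n$ pinch moves, so $b_1(F_{p,q}) = n$, and the upper bound in \eqref{EquationDoubleInequalityForGamma4} gives $\gamma_4(T(p,q)) \le n$. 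These two inequalities force $\gamma_4(T(p,q)) = n = b_1(F_{p,q})$, which is precisely the value predicted by Conjecture \ref{ConjectureNonorientableMilnorConjecture}.

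Next I would establish the connected-sum formula by proving the two inequalities separately. For the upper bound I would invoke subadditivity of $\gamma_4$ under connected sum: boundary-connect-summing the nonorientable surfaces realizing $\gamma_4$ of the summands produces a nonorientable surface in $B^4$ bounding the connected sum whose first Betti number is the sum of the two first Betti numbers (using $b_1 = 1-\chi$ and $\chi(\Sigma_1 \natural \Sigma_2) = \chi(\Sigma_1)+\chi(\Sigma_2)-1$). Iterating gives $\gamma_4(\#^m T(p,q)) \le m\,\gamma_4(T(p,q)) = mn$. For the lower bound I would use that $\sigma$ is additive under connected sum and that $\Upsilon$, hence $\upsilon$, is additive, so that $\upsilon - \tfrac{1}{2}\sigma$ is a homomorphism on the concordance group; thus $(\upsilon - \tfrac{1}{2}\sigma)(\#^m T(p,q)) = mn$, and the OSS bound applied to $\#^m T(p,q)$ gives $mn \le \gamma_4(\#^m T(p,q))$. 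The two bounds coincide, yielding $\gamma_4(\#^m T(p,q)) = mn = m\,\gamma_4(T(p,q))$.

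There is no genuinely hard step here, as the difficulty was absorbed into Theorem \ref{TheoremOnPartialSolutionOfNonorientableMilnorConjecture}; the points that merit a line of care are (i) confirming that the integer $n$ computing $b_1(F_{p,q})$ in Definition \ref{DefinitionOfTheSurfaceFpq} is the same $n$ appearing in \eqref{EquationSequenceOfPinchingMovesFirst}, i.e. that the canonical pinch sequence realizing $F_{p,q}$ is exactly the one used to evaluate $\upsilon - \tfrac{1}{2}\sigma$, and (ii) citing the subadditivity of $\gamma_4$ and the additivity of $\upsilon$ and $\sigma$, all standard. The only conceptual content is that the OSS lower bound meets the surface upper bound, which is exactly the coincidence Theorem \ref{TheoremOnPartialSolutionOfNonorientableMilnorConjecture} was arranged to guarantee.
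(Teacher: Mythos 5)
Your proposal is correct and is exactly the argument the paper intends: the corollary is stated there as a direct consequence of Theorem \ref{TheoremOnPartialSolutionOfNonorientableMilnorConjecture}, the OSS bound \eqref{OSSBoundOnGamma4}, and the upper bound $\gamma_4(T(p,q)) \le b_1(F_{p,q}) = n$, with the connected-sum statement following from the fact that $\upsilon - \tfrac{1}{2}\sigma$ is a concordance homomorphism together with subadditivity of $\gamma_4$ under boundary connected sum. Your point (i) is also fine, since the pinch sequence is canonical and Theorem \ref{TheoremOnReversingPinchingMoves} identifies the $n$ of \eqref{EquationSequenceOfPinchingMovesFirst} with the minimal number of pinch moves defining $F_{p,q}$.
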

In Proposition \ref{PropositionCharacterizingTheCaseOfUpsilonMinusSigmaOverTwoEqualsN-1} we provide a complete list of torus knots $T(p,q)$ with  $(\upsilon-\frac{1}{2}\sigma)(T(p,q)) = n-1$ and $b_1(F_{p,q}) = n$. All such knots obey the double inequality $n-1 \le \gamma_4(T(p,q)) \le n$. 

\begin{example} \label{ExampleOfConcreteFamiliesOfTorusKnotsPassingTheNonorientableMilnorConjecture}
Many families of torus knots satisfying the conditions of Theorem \ref{TheoremOnPartialSolutionOfNonorientableMilnorConjecture} (and hence the conclusions of Corollary \ref{CorollaryAboutValiditiyOfNonOrientableMilnorConjecture}) can be explicitly described. The simplest among them are the knots
%
$$T(p_k,q_k)  =  T( p_0+k(p_0(q_1-1)-2\eps)\, , \,1+k(q_1-1)),$$
with $\eps \in \{\pm 1\}$, $q_1\ge 3$ odd and $p_0\ge 1$. These choices lead to $m_k=2$ for all $k\ge 1$. Thus, if either $p_0\ge 2$ is even and $\eps=1$, or $p_0$ is odd and $q_1$ is congruent to $\eps$ modulo 4, then $\gamma_4(T(p_n,q_n)) = n$, showing that $T(p_n,q_n)$ satisfies Conjecture~\ref{ConjectureNonorientableMilnorConjecture}. 

Simplifying our choices further by picking $p_0=2$ and $q_1=3$ leads to 
$$T(p_k,q_k) = T(2(k+1) , 2(k+1) - 1), $$
which incidentally is the one-parameter family of knots used by Batson \cite{Batson:2014} to first prove that $\gamma_4$ is an unbounded function. Thus, his examples appear as one of the simplest subfamilies of our examples for which Conjecture~\ref{ConjectureNonorientableMilnorConjecture}  holds. \end{example}



A side product of our computations of the OSS lower bound for $\gamma_4$ are explicit formulas for $\upsilon$ and $\sigma$ for all torus knots. 

\begin{theorem} \label{TheoremAboutTheValueOfUpsilonForAllTorusKnots}
Let $p,q>1$ be relatively prime integers with $q$ odd, and either with $p$ even or with $p>q$. Suppose that $T(p,q)$ reduces to the unknot $T(p_0,1)$ after $n$ pinch moves. Then 
$$\upsilon(T(p,q)) = \frac{1}{2} n+\frac{1}{4} ( p_0 - pq).$$
In particular, since $\upsilon(T(p,q))\in \mathbb Z$, it follows that $n\equiv \frac{1}{2}(p_0-pq)\, (\text{mod }2)$.
\end{theorem}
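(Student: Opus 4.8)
The plan is to combine the two case-by-case computations of $(\upsilon - \frac{1}{2}\sigma)(T(p,q))$ from Theorems 1.5 and 1.6 with an independent closed formula for the signature $\sigma(T(p,q))$, and to exploit the recursive structure of the pinch-move sequence \eqref{EquationSequenceOfPinchingMovesFirst} to telescope everything into the claimed expression $\frac{1}{2}n + \frac{1}{4}(p_0 - pq)$. Since $\upsilon = (\upsilon - \frac{1}{2}\sigma) + \frac{1}{2}\sigma$, the entire content reduces to showing that the explicit alternating sums in Theorems 1.5 and 1.6, when added to $\frac{1}{2}\sigma$, collapse to the single clean quantity above regardless of which case (parity of $p$, values of the $\eps_k$ and $m_k$) we are in.

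The first step I would carry out is to establish a recursion for $\sigma$ under a pinch move. A pinch move $T(p_k,q_k) \stackrel{\eps_k}{\to} T(p_{k-1},q_{k-1})$ is a genus-changing cobordism lying in $T^2\times[-\delta,\delta]$, and the signature of a torus knot is a well-understood (if combinatorially intricate) function of its parameters; I would track how $\frac{1}{2}\sigma$ changes across a single $\eps$-move, aiming to show that each move contributes a controlled increment. The key product I expect to fall out is that $\frac{1}{2}\sigma(T(p_k,q_k)) - \frac{1}{2}\sigma(T(p_{k-1},q_{k-1}))$ equals $\frac{1}{4}(p_kq_k - p_{k-1}q_{k-1})$ up to a correction term that precisely cancels the corresponding term in the $(\upsilon-\frac{1}{2}\sigma)$ formula. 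Summing over $k=1,\dots,n$ then telescopes $\frac{1}{4}\sum(p_kq_k - p_{k-1}q_{k-1})$ to $\frac{1}{4}(pq - p_0 q_0) = \frac{1}{4}(pq - p_0)$, since $q_0 = 1$.

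The second step is the bookkeeping that merges this with the $(\upsilon - \frac{1}{2}\sigma)$ data. Here I would treat the two parities of $p$ separately, matching the hypotheses of Theorems 1.5 and 1.6, and verify that adding $\frac{1}{2}\sigma$ to each of the (up to four) cases produces the same answer $\frac{1}{2}n - \frac{1}{4}(pq - p_0)$. The alternating sums $\sum_{k_i\in\mathcal I}(-1)^{i-1}(n-k_i)$ and their analogues for $\mathcal J$ should be exactly the terms needed to reconcile the per-case differences in the signature contribution, so that the $\eps_k$- and $m_k$-dependence disappears in the total. The final arithmetic remark, that $n \equiv \frac{1}{2}(p_0 - pq) \pmod 2$, is then immediate: since $\upsilon(T(p,q))$ is an integer and equals $\frac{1}{2}n + \frac{1}{4}(p_0 - pq) = \frac{1}{2}\bigl(n + \frac{1}{2}(p_0-pq)\bigr)$, the quantity $n + \frac{1}{2}(p_0 - pq)$ must be even, giving the stated congruence.

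The main obstacle I anticipate is the signature recursion in the first step: obtaining a clean formula for how $\sigma$ changes under a single pinch move, uniformly across the sign $\eps_k$ and the residue of $m_k$ modulo $4$ (or of $q_1 - \eps_1$), is delicate because $\sigma$ for torus knots is governed by counting lattice points or jumps of the Tristram--Levine signature function, and these counts interact nontrivially with the parameter reduction $(r,s) = (|p-2t|,|q-2h|)$. I expect this is precisely why the paper advertises ``new closed formulas for the signature of torus knots'' as a side product: the recursion must be derived carefully enough that, case by case, the signature increment and the $(\upsilon-\frac{1}{2}\sigma)$ increment conspire to leave only $\frac{1}{2} + \frac{1}{4}(p_kq_k - p_{k-1}q_{k-1})$ per move, which is what makes the telescoping work.
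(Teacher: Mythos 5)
Your proposal has a genuine circularity problem when measured against the logical structure of this paper. You take as inputs (i) the case-by-case formulas for $(\upsilon-\frac{1}{2}\sigma)(T(p,q))$ of Theorems \ref{MainResultForP0Odd} and \ref{MainResultForP0Even}, and (ii) an ``independent closed formula'' for $\sigma(T(p,q))$. Neither input is available before Theorem \ref{TheoremAboutTheValueOfUpsilonForAllTorusKnots} is proved. In the paper, Theorems \ref{MainResultForP0Odd} and \ref{MainResultForP0Even} are proved in Sections \ref{SectionForSignatureComputations}--\ref{SectionWithProofsOfMainTheorems} by combining signature recursions with the already-computed values of $\upsilon$: the proofs of Propositions \ref{PropositionSecondReductionStepForTheCaseOfP0BeingEven}, \ref{PropositionSecondReductionStepForTheCaseOfP0BeingOdd} and \ref{PropositionThirdReductionStepForTheCaseOfP0BeingEven} repeatedly substitute the closed $\upsilon$-values from Theorem \ref{TheoremWithTheUltimateUpsilonCalcuationsProved}, whose first equation \emph{is} Theorem \ref{TheoremAboutTheValueOfUpsilonForAllTorusKnots} (e.g.\ the cancellation step $2\upsilon(T(r_n-s_n,2s_n)) = n-1+s_n^2-r_ns_n$). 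Likewise, the closed signature formulas (Corollary \ref{CorollaryClosedSignatureFormulas}) are \emph{derived from} Theorems \ref{MainResultForP0Odd}, \ref{MainResultForP0Even} and \ref{TheoremAboutTheValueOfUpsilonForAllTorusKnots} by subtraction; they are a side product, not a prior ingredient. So writing $\upsilon = (\upsilon-\frac{1}{2}\sigma) + \frac{1}{2}\sigma$ and plugging in these two results assumes the theorem you are trying to prove.

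The deeper reason your route is hard to rescue is that $(\upsilon-\frac{1}{2}\sigma)$ does not satisfy a self-contained recursion under the Euclidean-type reductions used here: the Feller--Krcatovich recursion for $\upsilon$ and the Gordon--Litherland--Murasugi recursion for $\sigma$ behave differently (the latter flips sign in half its cases), so any attempt to track their difference, or to establish your hoped-for per-pinch-move signature increment $\frac{1}{4}(p_kq_k-p_{k-1}q_{k-1})$ plus controlled corrections, forces you to compute $\upsilon$ and $\sigma$ separately through the whole reduction --- at which point the detour through $\sigma$ is unnecessary. The paper's actual proof of Theorem \ref{TheoremAboutTheValueOfUpsilonForAllTorusKnots} never touches the signature at all: it is a pure $\upsilon$ computation using the recursion of Lemma \ref{LemmaRecursionFormulaForUpsilon}, the doubly indexed sequence $\rho_{k,n}$ of \eqref{DefinitionOfRhoKCommaN}, and a three-stage telescoping reduction (Propositions \ref{PropositionReducingGeneralPNCaseToMultipleOfP0}, \ref{PropositionPropertiesOfUpsilonInTheFirstStep}, \ref{PropositionComputationForGeneralRhosWithAnySign}), in which the complicated case-dependence cancels identically and only $\frac{1}{2}n + \frac{1}{4}(p_0-pq)$ survives. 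Your final congruence observation is correct but was never the issue; the missing content is a signature-free (or otherwise independent) derivation of the main formula.
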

The next statement is a direct consequence of Theorems \ref{MainResultForP0Odd}, \ref{MainResultForP0Even}, and \ref{TheoremAboutTheValueOfUpsilonForAllTorusKnots}.

\begin{corollary} \label{CorollaryClosedSignatureFormulas}
Let $p,q>1$ be relatively prime integers with $q$ odd and either with $p$ even or $p>q$. Let $n$, $\{p_k\}_{k=0}^n$, $\{q_k\}_{k=1}^n$ and $\{\eps_k\}_{k=1}^n$ be as in \eqref{EquationSequenceOfPinchingMoves} and set $m_k = (p_{k+1}+\eps_{k}\eps_{k+1}p_{k-1})/p_k$ for $k=1,\dots, n-1$, as in Theorem \ref{MainResultForP0Odd}. 
\begin{itemize}
\item[(i)] 
Suppose $p$ is odd. If $n=1$ set $\mathcal I=\emptyset$, and for $n\ge 2$ let $\mathcal I=\{k_1,\dots, k_\ell\}$ be the set from Theorem \ref{MainResultForP0Odd} of all indices $k$ for which $m_k\equiv 0\,(\text{mod }4)$, listed in increasing order. Then 
$$\sigma(T(p,q)) = \left\{
\begin{array}{rl}
\frac{1}{2} (p_0 - pq) -  \left[n - 2\sum _{k_i\in \mathcal I} (-1)^{i-1} (n-k_i)\right] &  ;  q_1-\eps_1 \equiv 0\,(\text{mod }4),  \cr & \cr
\frac{1}{2}(p_0 -pq) +\left[n - 2\sum _{k_i\in \mathcal I} (-1)^{i-1} (n-k_i)\right] &  ;   q_1-\eps_1 \equiv2\,(\text{mod }4),
\end{array}
\right. $$
where it is understood that the empty sum (case of $\mathcal I=\emptyset$) equals 0. 
\vskip3mm
\item[(ii)] Suppose $p$ is even. If $n=1$ then $\sigma(T(p,q)) = -\eps_1+\frac{1}{2}(p_0-pq).$ If $n=2$ let $\mathcal J=\emptyset$ and for $n\ge 3$ let $\mathcal J=\{k_1,\dots, k_\ell\}$ be the set from Theorem \ref{MainResultForP0Even} of all indices $k$ for which $\eps_{k}\eps_{k+1} = -1$, listed in increasing order. Then 
$$\sigma(T(p,q)) = \left\{
\begin{array}{rl}
\frac{1}{2} (p_0 - pq) -  \eps_1 \left[n - 2\sum _{k_j\in \mathcal J} (-1)^{j-1} (n-k_j)\right] & ;  \eps_1\eps_2 =1,\cr  & \cr
-2\eps_1+ \frac{1}{2} (p_0 - pq) +\eps_1  \left[n - 2\sum _{k_j\in \mathcal J} (-1)^{j-1} (n-k_j)\right] &  ; \eps_1\eps_2=-1,
\end{array}
\right. $$
where it is understood that the empty sum (case of $\mathcal J=\emptyset$) equals 0. 
\end{itemize}
\end{corollary}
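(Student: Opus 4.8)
The plan is to treat this purely as an algebraic consequence of the three cited results, with no new geometry required. The single identity driving everything is the tautology $(\upsilon - \tfrac12\sigma)(T(p,q)) = \upsilon(T(p,q)) - \tfrac12\sigma(T(p,q))$, which I rearrange to isolate the signature:
\begin{equation*}
\sigma(T(p,q)) = 2\,\upsilon(T(p,q)) - 2\left(\upsilon - \tfrac12\sigma\right)(T(p,q)).
\end{equation*}
I would first substitute the closed formula $\upsilon(T(p,q)) = \tfrac12 n + \tfrac14(p_0 - pq)$ from Theorem \ref{TheoremAboutTheValueOfUpsilonForAllTorusKnots}, reducing the target to
\begin{equation*}
\sigma(T(p,q)) = n + \tfrac12(p_0 - pq) - 2\left(\upsilon - \tfrac12\sigma\right)(T(p,q)),
\end{equation*}
valid for every torus knot covered by the hypotheses. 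Everything then comes down to inserting the value of $\upsilon - \tfrac12\sigma$ supplied by Theorems \ref{MainResultForP0Odd} and \ref{MainResultForP0Even} and collecting terms.

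For part (i), with $p$ odd, I would plug in the two branches of Theorem \ref{MainResultForP0Odd}. Writing $S = \sum_{k_i \in \mathcal I}(-1)^{i-1}(n-k_i)$, the branch $q_1 - \eps_1 \equiv 0 \pmod 4$ gives $\upsilon - \tfrac12\sigma = n - S$, so $\sigma = n + \tfrac12(p_0-pq) - 2(n-S) = \tfrac12(p_0-pq) - [n - 2S]$; the branch $q_1 - \eps_1 \equiv 2 \pmod 4$ gives $\upsilon - \tfrac12\sigma = S$, so $\sigma = \tfrac12(p_0-pq) + [n - 2S]$. These are exactly the two listed expressions.

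For part (ii), with $p$ even, the $n=1$ case follows at once from the two-line formula in Theorem \ref{MainResultForP0Even}: substituting $\upsilon - \tfrac12\sigma = 1$ when $\eps_1 = 1$ and $\upsilon - \tfrac12\sigma = 0$ when $\eps_1 = -1$ yields $\sigma = -\eps_1 + \tfrac12(p_0-pq)$ in both instances. For $n \ge 2$, writing $S = \sum_{k_j\in\mathcal J}(-1)^{j-1}(n-k_j)$, I would run through the four sign combinations of $(\eps_1,\eps_2)$ from Theorem \ref{MainResultForP0Even} and simplify each; for instance $(\eps_1,\eps_2)=(-1,1)$ gives $\upsilon-\tfrac12\sigma = (n-1) - S$ and hence $\sigma = (2-n) + \tfrac12(p_0-pq) + 2S$, which matches $-2\eps_1 + \tfrac12(p_0-pq) + \eps_1[n-2S]$ at $\eps_1 = -1$. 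The two cases with $\eps_1\eps_2 = 1$ collapse into the single expression $\tfrac12(p_0-pq) - \eps_1[n-2S]$, and the two with $\eps_1\eps_2 = -1$ into $-2\eps_1 + \tfrac12(p_0-pq) + \eps_1[n-2S]$.

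The content of the argument is entirely bookkeeping, so the only real obstacle is organizational: confirming that the sign $\eps_1$ factored out in front of the bracket in part (ii) correctly reproduces all four $(\eps_1,\eps_2)$ branches, and that the mod-$4$ conditions on $q_1 - \eps_1$ in part (i) align with the correct branch of Theorem \ref{MainResultForP0Odd}. I would verify each of the (two, plus one, plus four) cases individually and check that the empty-sum convention—$\mathcal I$ or $\mathcal J$ empty, or $n$ small—remains consistent across the two theorems being combined.
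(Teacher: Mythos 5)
Your proposal is correct and is exactly the argument the paper intends: the corollary is stated there as a direct consequence of Theorems \ref{MainResultForP0Odd}, \ref{MainResultForP0Even}, and \ref{TheoremAboutTheValueOfUpsilonForAllTorusKnots}, obtained precisely by writing $\sigma = 2\upsilon - 2(\upsilon - \tfrac{1}{2}\sigma)$ and substituting the closed formulas. Your case-by-case verification (including the collapse of the four $(\eps_1,\eps_2)$ branches into the two $\eps_1\eps_2$ branches) checks out.
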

\subsection{Organization}  Section \ref{SectionOnUndoingPinchingMoves} begins with the proof of Batson's formula for the parameters produced in a pinch move (Equation~\ref{Definition Of (r,s) From (p,q)}) and proves several subsequent relationships among these parameters, culminating in Theorem \ref{TheoremOnReversingPinchingMoves}. Section \ref{SectionOnUndoingPinchingMoves} also provides a proof of Proposition \ref{TorusKnotsThatBoundMobiusBands}. 

In Section \ref{SectionOnComputingTheUpsilonInvariant} we calculate $\upsilon(T(p,q))$ for all torus knots $T(p,q)$ and provide a proof of Theorem \ref{TheoremAboutTheValueOfUpsilonForAllTorusKnots}. 

In Section \ref{SectionForSignatureComputations}, which is largely technical and computation heavy,  we obtain a formula for $(\upsilon-\frac{1}{2}\sigma)(T(p,q))$ for all torus knots $T(p,q)$. These formulas are applied in Section \ref{SectionWithProofsOfMainTheorems} to provide proofs of Theorems \ref{MainResultForP0Odd}, \ref{MainResultForP0Even}, and \ref{TheoremOnPartialSolutionOfNonorientableMilnorConjecture}.  

\vskip5mm
{\bf Acknowledgements}  We wish to thank Josh Batson and Andrew Lobb for helpful email correspondence. The second author also thanks Robert Lipshitz for discerning questions that led to further insights. S. Jabuka was partially supported by a grant from the Simons Foundation, Award ID 524394.

\section{Relationships among the pinch move parameters} \label{SectionOnUndoingPinchingMoves}
The goal of this section is to describe several of the relationships among the parameters present in pinch moves $T(p,q) \stackrel{\eps}{\longrightarrow} T(r,s)$ as in \eqref{PinchMoveParameters}. We begin by proving the formula for $r$ and $s$ in terms of $p$ and $q$, given in Equation~\ref{Definition Of (r,s) From (p,q)}. We then proceed to prove several useful inter-relationships among the values of $p, q, r, s,$ and $\eps$, culminating with Theorem~\ref{TheoremOnReversingPinchingMoves}.

\begin{lemma} \label{LemmaOnTheProofOfBatsonsPinchingMoveFormula}
Begin with a diagram of the torus knot $T(p, q)$ on the flat torus. Apply a pinch move to $T(p,q)$. The resulting torus knot (up to orientation) is $T(p-2t, q-2h)$ where $t$ and $h$ are the integers uniquely determined by the requirements 
\begin{align*}
t & \equiv -q^{-1}\,(\text{mod }p)\qquad\text{ and } \qquad   t\in \{0,\dots, p-1\},\cr
h & \equiv \phantom{-} p^{-1}\,(\text{mod }q) \qquad \text{ and } \qquad h\in \{0,\dots, q-1\}.
\end{align*}

\end{lemma}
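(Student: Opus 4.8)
The plan is to work directly with the standard diagram of $T(p,q)$ on the flat torus, realized as a collection of $\gcd(p,q)=1$ parallel strands of slope $p/q$ on the unit square $[0,1]^2$ with opposite sides identified. A pinch move attaches a small band between two neighboring parallel strands and surgers it; the effect is to locally reconnect the strands so that the resulting curve still lies on the torus and is therefore again a torus knot $T(r,s)$. Since the new curve lies on $T^2$, its isotopy class is determined by its homology class in $H_1(T^2)\cong \mathbb Z^2$, so the entire problem reduces to computing the homology class of the post-surgery curve.

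First I would fix coordinates: let $\mu,\lambda$ be the standard meridian/longitude generators, so $T(p,q)$ represents $p\mu+q\lambda$ (or $q\mu+p\lambda$, depending on convention — I would pin this down against Figure~\ref{FigurePinchingMove}). The band is placed between two adjacent strands; I would locate precisely which two strands are adjacent in the square picture. Two strands of a $(p,q)$-curve that are geometrically neighboring on the flat torus are separated by a specific lattice step, and identifying that step is the crux: the band connects the strand passing through a point to the strand that is its nearest parallel translate. The key computation is that the separation between neighboring strands, measured in the $\mu$ and $\lambda$ directions, is governed exactly by the modular inverses $q^{-1} \bmod p$ and $p^{-1} \bmod q$. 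Concretely, I expect that the band joins a strand to the one displaced by $(\,\pm 2t,\pm 2h\,)$ in homology after resolving, where $t$ and $h$ are the unique residues in $\{0,\dots,p-1\}$ and $\{0,\dots,q-1\}$ realizing those inverses; the homology class of the new curve is then $(p-2t)\mu+(q-2h)\lambda$, and passing to $T(|p-2t|,|q-2h|)$ accounts for the orientation ambiguity and the absolute values.

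The key steps, in order, are: (1) set up the flat-torus diagram and record the homology class of $T(p,q)$; (2) determine which pair of strands is neighboring and compute the lattice offset between them in terms of $t$ and $h$; (3) compute the homology class of the surgered curve, showing it equals $(p-2t)\mu+(q-2h)\lambda$ up to sign; (4) verify that $t\equiv -q^{-1}$ and $h\equiv p^{-1}$ are forced by requiring the offset to correspond to genuinely adjacent strands and that the residue ranges make $t,h$ unique; and (5) invoke that a curve on $T^2$ with homology class $(a,b)$, $\gcd(|a|,|b|)=1$, is the torus knot $T(|a|,|b|)$, reconciling signs via the passage to the mirror/orientation reversal.

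The main obstacle I anticipate is Step (2): correctly identifying \emph{which} strands are adjacent and extracting the precise offset in terms of the modular inverses. The bookkeeping of the cyclic ordering of the $p$ strand-endpoints along an edge of the square — equivalently, understanding the permutation $x\mapsto x+q^{-1}\pmod p$ that records how strands are permuted as one traverses the longitude — is where the inverses $q^{-1}\bmod p$ and $p^{-1}\bmod q$ enter, and getting the signs and the factor of $2$ exactly right (the band affects both adjacent strands symmetrically, hence $2t$ and $2h$ rather than $t$ and $h$) is the delicate point. A clean way to discharge this is to track a single intersection point of the knot with each of the two coordinate circles and count, modulo $p$ and modulo $q$ respectively, how the pinch reconnection shifts the strand label; the resulting shift is exactly $2t$ and $2h$, after which Steps (3)–(5) are essentially formal.
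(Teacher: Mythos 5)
Your proposal is correct and follows essentially the same route as the paper: both reduce the problem to computing the homology class of the surgered curve via signed intersection counts with the meridian and longitude, with the inverses $t \equiv -q^{-1}\,(\text{mod }p)$ and $h \equiv p^{-1}\,(\text{mod }q)$ arising as the number of steps along the knot (through the edge-label sequence advancing by $-q \pmod{p}$, resp.\ $p \pmod{q}$) before the traversal reaches the neighboring strand and turns around. The \lq\lq clean way\rq\rq\ you describe in your final paragraph --- tracking intersections with the two coordinate circles and counting how many flip sign after the reconnection ($t$ and $h$ of them, giving signed counts $p-2t$ and $q-2h$) --- is precisely the paper's argument.
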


\begin{proof}
Consider the torus knot  $T(p,q)$ drawn on the flat torus as in Figure~\ref{bandmove}(a). In the figure, without loss of generality, we depict a torus knot with $p< q$. We remark that if one travels along the curve $T(p,q)$ in direction of the given orientation in Figure~\ref{bandmove}(a), the knot meets the points on the horizontal edge in a sequence of labels \!\!\!\!\! $\mod{q}$ that increase by $p\pmod{q}$ at each step. Similarly, the knot meets the boundary points on the vertical edge in a sequence of labels \!\!\!\!$\mod{p}$ that increase by $-q \pmod{p}$ at each step. This observation will be used later.

Perform a pinch move to $T(p,q)$ along the indicated band in Figure~\ref{bandmove}(a). The curve resulting from the pinch move is depicted in Figure~\ref{bandmove}(b). We choose an orientation for the resulting curve as shown in the figure. (Note that this choice of orientation determines the signs of the resulting knot's parameters. Namely, if the torus knot with the chosen orientation is $T(r,s)$, then the torus knot corresponding to choosing the opposite orientation will be $T(-r, -s)$.) 

To identify this torus knot that results from the band move, we must count the signed intersection of this new curve with a meridian and with a longitude of the torus, which we take to be the vertical edge and horizontal edge of the rectangle in Figure~\ref{bandmove}(b), respectively.

First, we count the signed intersection of the curve with the horizontal edge of the flat torus. Let us begin at Point $P$ in Figure~\ref{bandmove}(b) and proceed along the curve {\em against} the direction of the chosen orientation. By our earlier remark, if one proceeds in this way, the intersections with the horizontal edge of the flat torus occur at points with labels $p\pmod{q}$, $2p \pmod{q}$, $3p \pmod{q}$, and so on. Since $p$ and $q$ are relatively prime, one must eventually reach the intersection point labeled 1 on the horizontal edge. This will happen in $h$ steps where $h = p^{-1}\pmod{q}$ and $0\le  h<q$ since $hp = 1\pmod{q}$.  All of these intersections up to this point are negatively oriented. After the curve meets the intersection point labeled by $1$ on the horizontal edge, the curve turns around (see Figure~\ref{bandmove}(b)), and all remaining intersection points with the horizontal edge are positively oriented. Therefore, the curve has $h$  negatively oriented intersection points and $q - h$ positively oriented intersection points with the horizontal edge. So the curve's signed intersection with the horizontal edge is $(q - h) - h = q - 2h$.

Next, we count the signed intersection of the curve with the vertical edge of the flat torus. Let's again start at Point $P$ in Figure~\ref{bandmove}(b) and proceed along the curve in the same manner as before, against the direction of chosen orientation. By our earlier remark, if one proceeds in this way, the intersections with the vertical edge of the flat torus occur at points with labels $-q\pmod{p}$, $-2q \pmod{p}$, $-3q \pmod{p}$, and so on, until reaching the intersection point labeled $1$ on the vertical edge. This will happen in $t$ steps where $t = -q^{-1}\pmod{p}$ and $0\le t<p$ since $-tq = 1\pmod{p}$.  All of the intersection points in this sequence are negatively oriented. After reaching the point labeled with $1$ on the vertical edge, the curve turns around (see Figure~\ref{bandmove}(b)), and all remaining intersection points with the vertical edge of the flat torus are positively oriented. So then, there are $t$ negatively oriented intersection points and $p - t$ positively oriented intersection points with the vertical edge. Thus, the curve's signed intersection with the vertical edge is $(p - t) - t = p - 2t$. 

From this, we conclude that up to orientation, the curve resulting from the pinch move on the torus knot $T(p,q)$ is the the torus knot $T(p - 2t,q - 2h)$, where $t \equiv-q^{-1} \pmod{p}$ with $0 \le  t < p$ and $h \equiv p^{-1} \pmod{q}$ with $0 \le h < q$.  
\end{proof}

In the following result, we find alternate characterizations for the values of $t$ and $h$ defined in Lemma~\ref{LemmaOnTheProofOfBatsonsPinchingMoveFormula}. 
\begin{lemma} \label{AuxiliaryLemmaOnHowToFindTAndH}
For $p,q>1$ relatively prime integers, the associated values of $t$ and $h$ from Lemma~\ref{LemmaOnTheProofOfBatsonsPinchingMoveFormula} are characterized as the unique integers $t$ and $h$ satisfying the equation $ph-qt=1$ with $h\in\{1,\dots, q-1\}$. 
\end{lemma}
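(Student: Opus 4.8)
The plan is to translate the two modular characterizations of $t$ and $h$ from Lemma~\ref{LemmaOnTheProofOfBatsonsPinchingMoveFormula} into a single linear Diophantine identity, and then pin down the unknown quotient by a range argument. Recall that $t$ and $h$ are defined by $-tq\equiv 1\pmod p$ with $t\in\{0,\dots,p-1\}$ and $ph\equiv 1\pmod q$ with $h\in\{0,\dots,q-1\}$. First I would note that, since $p\not\equiv 0\pmod q$, its inverse is nonzero, so in fact $h\in\{1,\dots,q-1\}$.

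The key step is to start from the congruence $ph\equiv 1\pmod q$, which says $q\mid ph-1$, and introduce the integer $k$ defined by $ph-1=qk$, so that $ph-qk=1$. Reducing this identity modulo $p$ gives $-qk\equiv 1\pmod p$, hence $k\equiv -q^{-1}\equiv t\pmod p$. It then remains to show that $k$ itself lies in the range $\{0,\dots,p-1\}$, for then the congruence $k\equiv t\pmod p$ together with $t\in\{0,\dots,p-1\}$ forces $k=t$, and the identity $ph-qk=1$ becomes exactly $ph-qt=1$.

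To control the range of $k=(ph-1)/q$ I would use the bounds on $h$. From $h\geq 1$ and $p>1$ one gets $ph-1\geq p-1>0$, so $k>0$ and hence $k\geq 0$; from $h\leq q-1$ one gets $ph-1\leq p(q-1)-1=pq-p-1$, so $k\leq (pq-p-1)/q=p-(p+1)/q<p$, giving $k\leq p-1$. Thus $k\in\{0,\dots,p-1\}$, completing the identification $k=t$. For the uniqueness half of the statement, I would argue conversely: any integers $h'\in\{1,\dots,q-1\}$ and $t'$ with $ph'-qt'=1$ satisfy $ph'\equiv 1\pmod q$, so $h'\equiv p^{-1}\pmod q$, and by the uniqueness of the residue in $\{0,\dots,q-1\}$ we get $h'=h$; then $qt'=ph'-1=ph-1=qt$ yields $t'=t$.

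I expect no serious obstacle here; the statement is essentially bookkeeping. The only point requiring genuine care is the range estimate that forces $k\in\{0,\dots,p-1\}$, since this is what upgrades the congruence $k\equiv t\pmod p$ to the exact equality $k=t$, and hence converts the two separate modular definitions into the clean single equation $ph-qt=1$. Everything else follows from reducing the identity modulo $p$ and modulo $q$ and invoking the uniqueness of residues in the prescribed intervals.
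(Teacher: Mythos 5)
Your proof is correct and takes essentially the same approach as the paper's: both arguments turn on the integer $(ph-1)/q$, the range estimate forcing it into $\{0,\dots,p-1\}$, and reduction of the identity $ph-qt=1$ modulo $p$ and modulo $q$. The only differences are organizational --- the paper runs the range argument in the (equation $\Rightarrow$ congruences) direction, whereas you run it in the (congruences $\Rightarrow$ equation) direction via your auxiliary integer $k$ --- together with your explicit (and welcome) remark that $h\neq 0$, which the paper leaves implicit.
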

\begin{proof}
 Let $t,h \in \mathbb Z$ satisfy the equation $ph-qt=1$ where $h\in \{1,\dots, q-1\}$. We want to show that $t$ and $h$ satisfy the requirements of Lemma~\ref{LemmaOnTheProofOfBatsonsPinchingMoveFormula}. First we verify that $t\in \{0,\dots, p-1\}.$ Observe that 
$$t=\frac{ph-1}{q} = \frac{ph}{q}- \frac{1}{q} \leq \frac{p(q-1)}{q}- \frac{1}{q} <p-\frac{1}{q}.$$
Since $t$ is an integer and strictly less than $p$, then $t\le p-1$. To see that $t\ge 1$, note that the negation of this inequality, namely $t\le 0$, leads to $ph\le 1$ which is not possible since $p>1$ and $h\ge 1$.

Lastly, the mod $q$ and mod $p$ reductions of the equation $ph-qt=1$ show that $h\equiv p^{-1}\,(\text{mod }q)$ and $t\equiv -q^{-1}\,(\text{mod }p)$, implying that $t$ and $h$ satisfy the requirements of Lemma~\ref{LemmaOnTheProofOfBatsonsPinchingMoveFormula}.

Conversely, suppose that $t$ and $h$ satisfy the requirements in Lemma~\ref{LemmaOnTheProofOfBatsonsPinchingMoveFormula}. Namely, $t$ and $h$ are the unique integers satisfying $t \equiv -q^{-1}\,(\text{mod }p)$, with $t\in \{0,\dots, p-1\},$ and $h \equiv \phantom{-} p^{-1}\,(\text{mod }q)$, with $h\in \{0,\dots, q-1\}$. We need to show that $ph-qt=1$. Observe that the value $\frac{ph - 1}{q}$ satisfies the requirements for $t$, and since the value of $t$ is unique, $t = \frac{ph-1}{q}$. Hence $ph - qt = 1$, as desired. 
\end{proof}
\begin{figure}
\centering
\labellist
\small\hair 2pt
\pinlabel $0$ at 88 -30
\pinlabel $1$ at 155 -30
\pinlabel $q-1$ at 750 -30
\pinlabel $p-1$ at 820 50
\pinlabel $0$ at -20 380
\pinlabel $1$ at -20 315
\pinlabel $2$ at -20 250
\pinlabel $p$ at 545 505
\pinlabel $q-1$ at 745 505
\endlabellist
\begin{subfigure}[b]{0.4\textwidth}
        \includegraphics[width=\textwidth]{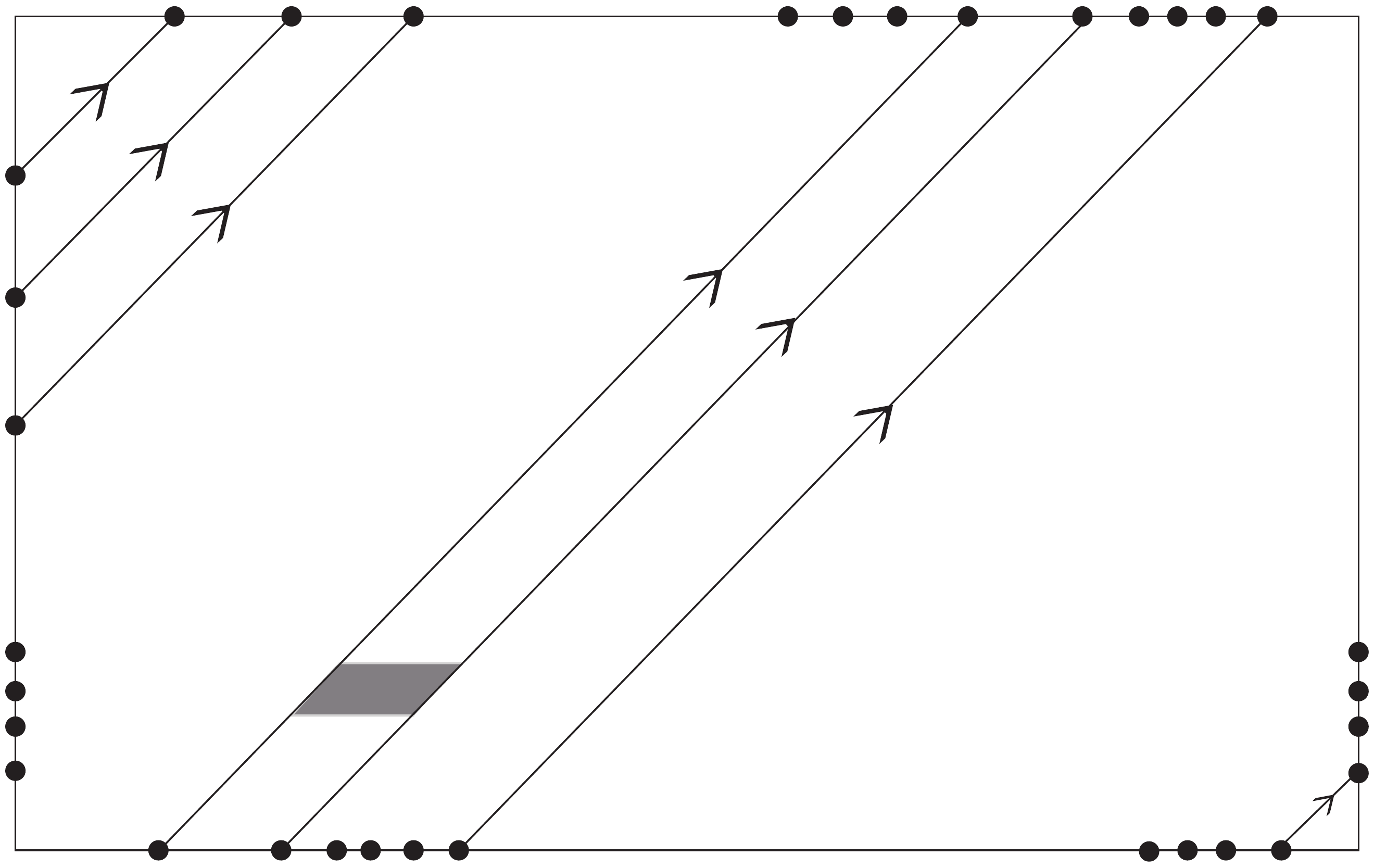}
        \caption{}
\end{subfigure}
\quad \quad \quad 
\labellist
\small\hair 2pt
\pinlabel $0$ at 88 -30
\pinlabel $1$ at 155 -30
\pinlabel $q-1$ at 750 -30
\pinlabel $p-1$ at 820 50
\pinlabel $0$ at -20 380
\pinlabel $1$ at -20 315
\pinlabel $2$ at -20 250
\pinlabel $p$ at 545 505
\pinlabel $q-1$ at 745 505
\pinlabel $\textcolor{black}{P}$ at 230 205
\endlabellist
\begin{subfigure}[b]{0.4\textwidth}
        \includegraphics[width=\textwidth]{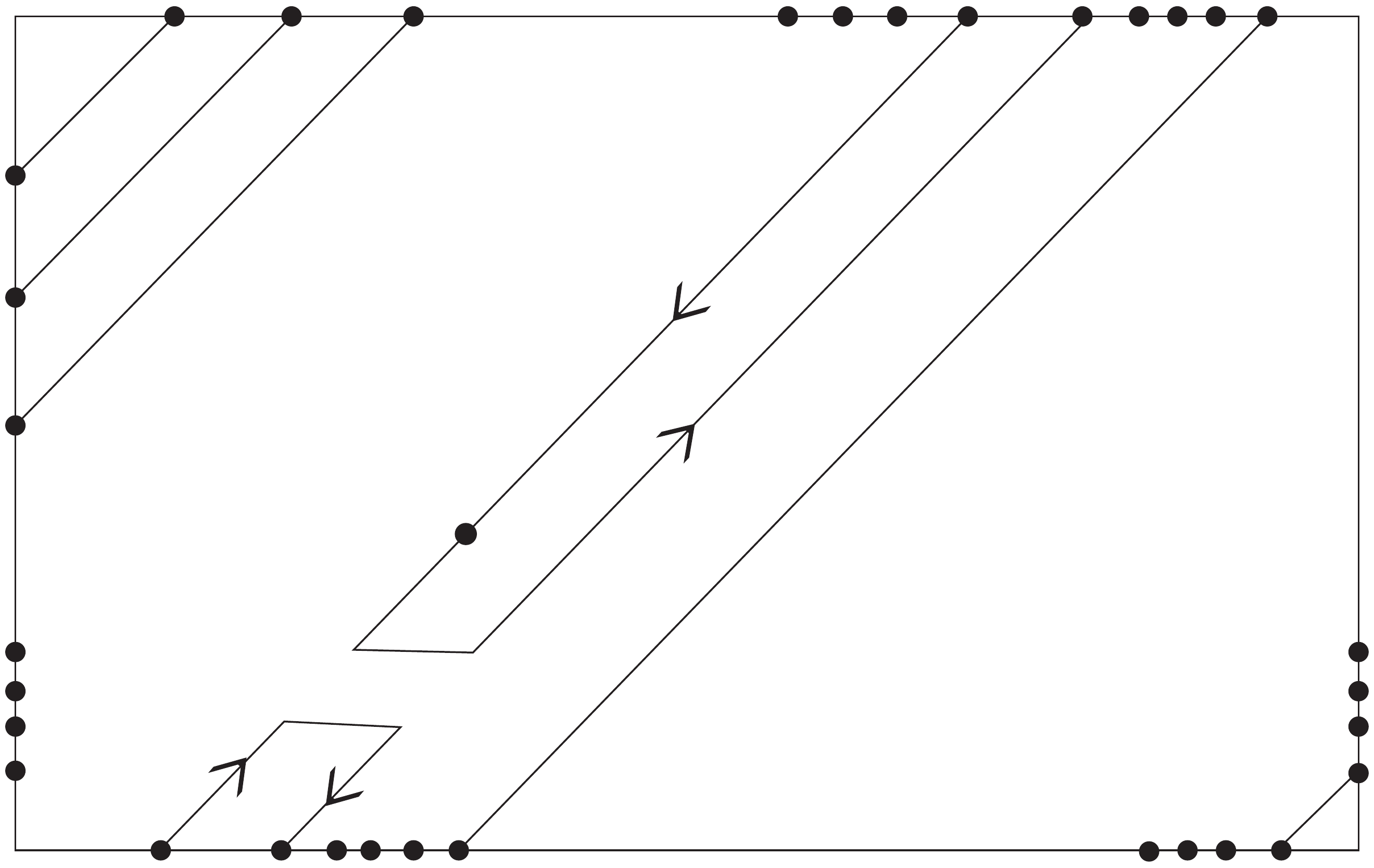}
         \caption{}
\end{subfigure}
\vskip3mm
\caption{Beginning with the knot $T(p,q)$, we perform one band move denoted by the gray band in (a). The resulting curve, denoted in (b), is again a torus knot $T(r,s)$ for some $r$ and $s$. The values of $r$ and $s$ are determined in Lemma~\ref{LemmaOnTheProofOfBatsonsPinchingMoveFormula}.}\label{bandmove} 
\end{figure}

In the next lemma, we show that the signs of $p - 2t$ and $q - 2h$ never differ, which justifies our use of the terms {\em positive pinch move} and {\em negative pinch move}.  
\begin{lemma} \label{LemmaAboutTheSameSignaOfRAndS}
Let $p,q>1$ be relatively prime integers. Without loss of generality, let $q$ be odd and let $p>q$ if $p$ is also odd. Let $t, h$ be obtained from $p,q$ as in Lemma~\ref{LemmaOnTheProofOfBatsonsPinchingMoveFormula}. Then 
$$(p-2t)(q-2h)\ge0,$$ 
with equality occurring if and only if $T(p,q) = T(2, \ell)$ for some odd integer $\ell$.
\end{lemma}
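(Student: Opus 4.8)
The plan is to collapse the congruence hypotheses into a single Diophantine identity relating $p-2t$ and $q-2h$, and then settle both assertions by an elementary sign analysis. First I would invoke Lemma~\ref{AuxiliaryLemmaOnHowToFindTAndH} to replace the modular conditions on $t$ and $h$ by the clean equation $ph - qt = 1$ together with the ranges $t \in \{1,\dots,p-1\}$ and $h \in \{1,\dots,q-1\}$ established there. Setting $a = p - 2t$ and $b = q - 2h$, a direct expansion yields the key relation
\[
qa - pb = q(p-2t) - p(q-2h) = 2(ph - qt) = 2 .
\]
This identity $qa - pb = 2$ is the engine of the entire argument, and finding this particular linear combination is really the only nonobvious step; everything after it is bookkeeping.

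Next I would use this identity to rule out $a$ and $b$ having strictly opposite signs. Since $a$ and $b$ are integers, if $a > 0$ and $b < 0$ then $a \ge 1$ and $-b \ge 1$, so
\[
qa - pb = qa + p(-b) \ge q + p > 2,
\]
contradicting the identity; symmetrically, if $a < 0$ and $b > 0$ then $qa - pb \le -q - p < 2$, again impossible. Here I only use $p,q \ge 2$ (indeed $q$ odd gives $q \ge 3$), so the inequalities are comfortably strict. Consequently $ab \ge 0$, which is exactly the claim $(p-2t)(q-2h) \ge 0$.

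For the equality statement I would exploit parity. Because $q$ is odd, $b = q - 2h$ is odd and hence nonzero, so $ab = 0$ can only occur when $a = 0$, i.e. $p = 2t$. Substituting $a = 0$ into the identity $qa - pb = 2$ gives $pb = -2$, and since $p \ge 2$ while $b$ is a nonzero odd integer, the only solution is $p = 2$, $b = -1$. Thus equality forces $p = 2$, which together with $q$ odd means $T(p,q) = T(2,\ell)$ for an odd integer $\ell$; conversely, when $p = 2$ one has $a = 0$ directly and the product vanishes. The step I expect to require the most care is this equality analysis: one must not lose the observation that $q$ odd makes $b$ odd (ruling out $b = 0$) and must verify that no larger even value of $p$ can satisfy $pb = -2$, so that $p = 2$ is genuinely the only case.
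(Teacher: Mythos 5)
Your proof is correct, but it runs along a different track than the paper's. Your engine is the single identity $q(p-2t)-p(q-2h)=2(ph-qt)=2$, which is in fact exactly the identity the paper establishes separately (and later) as Lemma~\ref{AuxiliaryLemmaOnTheRelationBetweenPQRAndS}; the paper's own proof of this lemma never writes it down. Instead, the paper argues asymmetrically: assuming $p-2t>0$, it multiplies by $h$, substitutes $ph=1+qt$ from Lemma~\ref{AuxiliaryLemmaOnHowToFindTAndH} to get $q-2h>-1/t$, and then uses integrality together with the oddness of $q-2h$ to conclude $q-2h>0$, mirroring the argument for $p-2t<0$. Your symmetric magnitude argument (opposite strict signs force $|qa-pb|\ge p+q>2$) avoids the fraction $-1/t$ and needs no parity at all for the inequality; parity enters your proof only in the equality analysis, where both proofs essentially coincide (the paper gets $1=\tfrac{p}{2}(2h-q)$ forcing $p=2$, you get $pb=-2$ with $b$ odd forcing $p=2$, and in the converse direction both reduce to checking $t=1$ when $p=2$, which your appeal to the range $t\in\{1,\dots,p-1\}$ from Lemma~\ref{AuxiliaryLemmaOnHowToFindTAndH} handles). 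What your route buys is economy and symmetry — it effectively unifies this lemma with Lemma~\ref{AuxiliaryLemmaOnTheRelationBetweenPQRAndS} into one Diophantine observation; what the paper's route buys is the slightly sharper directional statement (each sign of $p-2t$ individually forces the same sign of $q-2h$) stated in a form that feeds directly into the proof of Lemma~\ref{MagnitudeOfRandS}.
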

\begin{proof}
Observe that the parities of $p-2t$ and $q - 2h$ coincide with the parities of $p$ and $q$, respectively. Therefore, if $(p-2t)(q-2h) = 0$, it must be that $pq$ is even, as well. And so, by our convention, $p$ must be even and $q$ must be odd. Therefore, $p-2t=0$, and so $t = \frac{p}{2}$. Using the equation in Lemma~\ref{AuxiliaryLemmaOnHowToFindTAndH}, substituting, and simplifying, we have
\begin{align*} 
1& = ph - q\bigg(\frac{p}{2}\bigg) \cr
1& = \frac{p}{2} (2h - q) 
\end{align*}
It follows that $p = 2$ and $q - 2h = -1$. In conclusion, if $(p-2t)(q-2h) = 0$, then $T(p,q) = T(2, \ell)$ for some odd integer $\ell$.

Conversely, if $T(p,q) = T(2, \ell)$ for some odd integer $\ell$, then a brief computation shows that $t=1$ and $h=\frac{\ell-1}{2} +1$, leading to $p-2t=2-2= 0$ and $q-2h = \ell - (\ell-1) -2 = -1$, and so $(p-2t)(q-2h) = 0$, as desired.

Next, suppose that $p-2t >0$. We want to show that $q - 2h >0$, as well. Starting with $p-2t >0$ and using the fact (from Lemma~\ref{AuxiliaryLemmaOnHowToFindTAndH}) that $ph = 1 + qt$ we have:
\begin{align*} 
ph - 2th &>0  \cr
1+qt - 2th &> 0 \cr
q - 2h &> \frac{-1}{t}
\end{align*}
Since $t \ge 1$ and since $q - 2h$ must be odd, it follows that $q-2h>0$, as desired.

In the other direction, suppose that $p-2t<0$. Mirroring the argument above, it follows that $q - 2h < 0$, as well, completing the proof of the lemma. 
\end{proof}

\begin{lemma} \label{AuxiliaryLemmaOnTheRelationBetweenPQRAndS}
Suppose $T(r,s)$ is a torus knot obtained from the torus knot $T(p,q)$ by an $\eps$-move. Then $rq-sp = 2\eps$. 
\end{lemma}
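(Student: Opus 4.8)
The plan is to strip the absolute values in the definitions $r = |p - 2t|$ and $s = |q - 2h|$ using the sign $\eps$, and then collapse the desired identity onto the relation $ph - qt = 1$ furnished by Lemma~\ref{AuxiliaryLemmaOnHowToFindTAndH}.

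First I would record the sign information. By definition $\eps = \text{Sign}(p - 2t)$, and Lemma~\ref{LemmaAboutTheSameSignaOfRAndS} guarantees that $(p - 2t)(q - 2h)\ge 0$; hence, away from the degenerate case, the factors $p - 2t$ and $q - 2h$ are both nonzero and carry the \emph{same} sign $\eps$. Consequently $r = \eps(p - 2t)$ and $s = \eps(q - 2h)$, with $\eps^2 = 1$. The crucial point here is that Lemma~\ref{LemmaAboutTheSameSignaOfRAndS} lets me remove both absolute values with a single sign, rather than tracking them independently.

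Second I would substitute these expressions into $rq - sp$ and expand. The two $pq$-terms cancel, leaving $rq - sp = \eps[(pq - 2tq) - (pq - 2hp)] = 2\eps(hp - tq)$. Invoking $ph - qt = 1$ from Lemma~\ref{AuxiliaryLemmaOnHowToFindTAndH} then immediately yields $rq - sp = 2\eps$, as claimed. Finally I would dispose of the degenerate case: by Lemma~\ref{LemmaAboutTheSameSignaOfRAndS}, equality $(p-2t)(q-2h)=0$ occurs precisely for $T(2,\ell)$ with $\ell$ odd, where $p - 2t = 0$ and $q - 2h = -1$, so $r = 0$, $s = 1$; here $\eps$ is read off from the nonzero sign of $q - 2h$, and one checks directly that $rq - sp = 0\cdot q - 1\cdot 2 = -2 = 2\eps$.

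I do not anticipate a genuine obstacle: the entire content is the sign bookkeeping that removes the absolute values uniformly, after which the claim reduces to the Bézout-type identity $ph - qt = 1$ of Lemma~\ref{AuxiliaryLemmaOnHowToFindTAndH}. The only step that warrants care is confirming that $\eps$ is the common sign of \emph{both} factors $p-2t$ and $q-2h$, which is exactly the role played by Lemma~\ref{LemmaAboutTheSameSignaOfRAndS}.
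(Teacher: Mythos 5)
Your proof is correct and follows essentially the same route as the paper: write $r=\eps(p-2t)$, $s=\eps(q-2h)$, expand $rq-sp$, and invoke $ph-qt=1$ from Lemma~\ref{AuxiliaryLemmaOnHowToFindTAndH}. In fact you are slightly more careful than the paper's three-line computation, since you explicitly justify the common sign $\eps$ via Lemma~\ref{LemmaAboutTheSameSignaOfRAndS} and check the degenerate case $T(2,\ell)$ separately, both of which the paper leaves implicit.
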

\begin{proof}
 From Lemma \ref{AuxiliaryLemmaOnHowToFindTAndH}, we know that there exist $t$ and $h$ such that $r=\eps(p-2t)$ and $s=\eps(q-2h)$. Multiplying these two equations by $q$ and $p$ respectively, subtracting them, and applying Lemma~\ref{AuxiliaryLemmaOnHowToFindTAndH} leads to 
$$rq-sp = (p-2t)q\eps - (q-2h)p\eps =  2\eps(ph-qt) = 2\eps.$$
\end{proof}

\begin{lemma}\label{MagnitudeOfRandS}
Let $p,q>1$ be relatively prime integers and let the torus knot $T(r,s)$ have been obtained from the torus knot $T(p,q)$ by an $\eps$-move, with $r,s$ as in Lemma~\ref{LemmaOnTheProofOfBatsonsPinchingMoveFormula}. 
\begin{itemize}
\item[(a)] If $p>q$ and $\eps=1$ then $r>s$, while if $p>q$ and $\eps=-1$ then $r\ge s$.
\item[(b)] If $q>p$ and $\eps=1$ then $s \ge r$, while if $q>p$ and $\eps=-1$ then $ s> r$.
\end{itemize} 
\end{lemma}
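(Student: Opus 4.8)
The plan is to express the comparison of $r$ and $s$ through a single integer and then pin down its sign case by case from one algebraic identity. Writing $\eps$ for the sign of $p-2t$ as in the definition of an $\eps$-move, Lemma~\ref{LemmaAboutTheSameSignaOfRAndS} guarantees that $p-2t$ and $q-2h$ share a sign, so that $r=\eps(p-2t)\ge 0$ and $s=\eps(q-2h)\ge 0$. Hence $s-r=\eps\big[(q-p)+2(t-h)\big]=\eps X$, where I set $X=2(t-h)-(p-q)$. Thus the four assertions amount to determining the sign of $X$ (and whether it can vanish) according to the signs of $p-q$ and of $\eps$.

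The key step is to compute $pX$ using the relation $ph-qt=1$ from Lemma~\ref{AuxiliaryLemmaOnHowToFindTAndH}. Substituting $ph=qt+1$ into $pX=2pt-2ph-p^2+pq$ and simplifying yields the identity
$$pX=(p-q)(2t-p)-2.$$
This is the crux of the argument: it turns the comparison into a sign analysis of the product $(p-q)(2t-p)$, whose two factors have signs I can read off directly. Indeed $2t-p=-(p-2t)$ has sign $-\eps$, so $2t-p\le-1$ when $\eps=1$ and $2t-p\ge 1$ when $\eps=-1$; meanwhile $p-q\ge 1$ in part~(a), while $p-q\le-1$ in part~(b).

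With these signs in hand I would run the four cases uniformly. Whenever the two factors have opposite signs, the product is $\le-1$, so $pX\le-3<0$ and $X<0$ strictly; whenever they have the same sign, the product is $\ge 1$, so $pX\ge-1$, and since $X$ is an integer and $p\ge 2$ this forces $X\ge 0$. Reading off $s-r=\eps X$ then gives exactly the stated conclusions: in part~(a) the case $\eps=1$ has opposite-signed factors (hence $r>s$ strictly) and $\eps=-1$ has like-signed factors (hence $r\ge s$); in part~(b) the roles of the factors reverse, giving $s\ge r$ for $\eps=1$ and $s>r$ for $\eps=-1$.

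The main obstacle, and the only genuinely delicate point, is the rounding step that distinguishes the strict from the non-strict inequalities: it relies on integrality of $X$ together with $p\ge 2$, and equality $r=s$ occurs precisely when $(p-q)(2t-p)=2$, which is possible only in the two like-sign cases, consistent with the non-strict conclusions. One should also dispatch the degenerate situation $p-2t=0$ (the knots $T(2,\ell)$ of Lemma~\ref{LemmaAboutTheSameSignaOfRAndS}, where $\eps$ is not defined): there $r=0<1=s$, so the conclusion of part~(b) holds outright.
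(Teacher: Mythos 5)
Your proof is correct, and while it rests on the same raw ingredients as the paper's proof --- the relation $ph-qt=1$ from Lemma~\ref{AuxiliaryLemmaOnHowToFindTAndH}, the sign-sharing of Lemma~\ref{LemmaAboutTheSameSignaOfRAndS}, and integer rounding to separate the strict from the non-strict inequalities --- it organizes them along a genuinely different route. The paper proceeds case by case: for $p>q$, $\eps=1$ it writes $r=p-2t=\frac{p}{q}(q-2h)+\frac{2}{q}$ and estimates this directly against $s$; for $p>q$, $\eps=-1$ it shows $r>s-1$ and rounds; and it dismisses part (b) as ``analogous.'' You instead encode all four cases in the single identity $pX=(p-q)(2t-p)-2$, where $s-r=\eps X$, and read every conclusion off one uniform sign analysis. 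Your identity is, in effect, the relation $rq-sp=2\eps$ of Lemma~\ref{AuxiliaryLemmaOnTheRelationBetweenPQRAndS} in disguise (substitute $\eps r=p-2t$ and $\eps s=q-2h$), so your argument quietly reproves that lemma along the way. What the unified form buys: part (b) is actually proved rather than waved off; the borderline case is located exactly ($r=s$ if and only if $(p-q)(2t-p)=2$, possible only in the like-sign cases, consistent with the remark following the lemma in the paper); and the degenerate knots $T(2,\ell)$ with $p-2t=0$, for which $\eps$ is not even defined, are dispatched explicitly --- a point the paper's proof never confronts only because the degeneracy can occur solely in the omitted case $q>p$. The paper's case-by-case version is perhaps easier to verify line by line, but yours is shorter, symmetric in the four cases, and complete.
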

\begin{proof}
Let $t,h\in \mathbb Z$ be the unique integers from Lemma \ref{AuxiliaryLemmaOnHowToFindTAndH} for which $ph-qt=1$ and with $h\in \{1,\dots, q-1\}$.

Consider the case of $p>q$ and assume firstly that $\eps = 1$. This means that $p-2t\geq 0$ and $q-2h\geq0$. Then
$$r=p-2t=p-2 \bigg(\frac{ph-1}{q}\bigg) = \frac{p}{q}(q-2h) + \frac{2}{q} \geq q-2h+\frac{2}{q} > q-2h =s. $$ 
Secondly, suppose $\eps = -1$. Hence $p-2t \leq 0$. A short calculation shows that if $q =2,$ then $t=\frac{p-1}{2} + 1$ and therefore $p - 2t = 1>0$, which is a contradiction. So therefore we conclude in this case that $q>2$. By Lemma~\ref{LemmaAboutTheSameSignaOfRAndS}, it follows that $q - 2h$ is nonzero. Since $\eps = -1$, it follows that $q-2h < 0$. So then
$$r=2t-p = 2\bigg(\frac{ph-1}{q}\bigg)-p = \frac{p}{q}(2h - q)- \frac{2}{q} > \frac{p}{q}(2h-q) -1 > (2h-q)-1 = s-1.$$
Since $r>s-1$ and both sides are integers, it follows that $r\ge s$. 

The case of $q>p$ follows analogously; we omit the details. 
%
%
%

\end{proof}
\begin{remark}
The equality $r=s$ in the preceding lemma can only occur when $r=1=s$ (since $r$ and $s$ are still relatively prime integers) which, as we shall see shortly, 
can only happen if $\{p,q\} = \{t, t\pm 2\}$ for some odd integer $t\ge 3$. 
\end{remark}

The above series of lemmas concentrated on the result of {\em one} pinch move. Now we turn to a bigger challenge of starting with a torus knot $T(p,q)$ and performing a series of pinch moves until the knot first becomes unknotted. We will encode the results of this process as follows: 
\begin{align}\label{ChainOfPinchMoves}
T(p,q) = T(p_n,q_n) \stackrel{\eps_n}{\longrightarrow} T(p_{n-1},q_{n-1}) \stackrel{\eps_{n-1}}{\longrightarrow} \dots \stackrel{\eps_2}{\longrightarrow} T(p_{1},q_{1}) \stackrel{\eps_1}{\longrightarrow} T(p_{0},q_0) = U.
\end{align}
%

Our final goal in this section is to describe all torus knots $T(p,q)$ for which $n$ pinch moves will produce an unknot. As a warm-up exercise toward this goal, consider the case when $T(p,q) = T(p_1,q_1)$ reduces to the unknot $T(p_0,q_0)= U$ via a single $\eps = \eps_1$ pinch move. (Such knots must necessarily bound M\"obius bands in $B^4$.) 

We use the convention here that $p, q>1$ are relatively prime integers such that $q$ is odd and if $p$ is also odd, then $p>q$. Since $T(p_0,q_0)$ is an unknot, one of $p_0$ or $q_0$ must equal $1$. Because parity of parameters is preserved by a pinch move, $q_0$ must be odd, and if $p$ is also odd, then $p_0\geq q_0$ by Lemma~\ref{MagnitudeOfRandS}. Therefore $q_0=1$. Using this and Lemma \ref{AuxiliaryLemmaOnTheRelationBetweenPQRAndS}, we have
$$2\eps_1 = p_0q_1-q_0p_1 = p_0q_1-p_1.$$ 
Thus $p_1=p_0q_1-2\eps_1$, with $q_1\ge 3$, an odd integer. Observe that if $p_0 = 0$ or $1$, then it will follow that $\eps_1 = -1$ since $p_1$ is positive. 

In summary, if  some torus knot $T(p_1,q_1)$ reduces to an unknot $T(p_0,1)$ by an $\eps_1$-move, then $q_1$ is some odd integer with $q_1\ge 3$, and $p_1=p_0q_1-2\eps_1$. Moreover, these parameters satisfy the constraint that if $p_0 = 0$ or $1$, then it will follow that $\eps_1 = -1$.

Conversely, choose an odd integer $q_1\geq 3$, and choose any $p_0\geq 0$ and choose $\eps_1 \in \{\pm 1\}$ subject to the constraint that if $p_0 = 0$ or $1$, then $\eps_1 = -1$. Then the torus knot $T(p_1, q_1)$ where $p_1=p_0q_1-2\eps_1$ reduces to an unknot via an $\eps_1$-pinch move. Namely, set $h_1=\frac{1}{2}(q_1-\eps_1)$ and $t_1=\frac{1}{2}(p_1-\eps_1p_0)$, and note that $h_1, t_1\in \mathbb Z$. Then 
$$p_1h_1-q_1t_1 = \frac{1}{2} p_1 ( q_1-\eps_1) - \frac{1}{2}q_1(p_1-\eps_1p_0) = \frac{\eps_1}{2}(p_0q_1-p_1) =1.$$
Since $q_1\ge 3$, it follows that $1\le h_1 \le q_1-1$, and so Lemma \ref{AuxiliaryLemmaOnHowToFindTAndH} guarantees that $1\le t_1 \le p_1-1$, as well. This same lemma also shows that $T(p_1,q_1)$ reduces via an $\eps_1$ move to
$$T(\eps_1(p_1-2t_1), \eps_1(q_1-2h_1)) = T(\eps_1(p_1-(p_1-\eps_1p_0)),\eps_1(q_1-(q_1-\eps_1))) = T(p_0,1).$$ 

Summarizing this discussion, we proved that a nontrivial torus knot $T(p,q)$ (with $q$ odd and $p> q$ if $p$ is also odd) reduces to an unknot via a single pinch move if and only if $p=p_0q-2\eps_1$ for some odd $q\geq 3$ and for some $\eps_1\in\{\pm1\}$ and for some $p_0\geq 0$, where if $\eps_1 = 1$, then $p_0\geq 2$. Observe that this proves Proposition~\ref{TorusKnotsThatBoundMobiusBands} from the introduction. The next theorem generalizes this result. 
\begin{theorem}\label{TheoremOnReversingPinchingMoves}
Let $p,q>1$ be relatively prime integers with $q$ odd and with $p>q$ if $p$ is also odd. Suppose that $T(p,q)$ first reduces to the unknot after $n$ pinch moves. We keep track of the torus knots generated in this process and the sign of each pinch move as in \eqref{ChainOfPinchMoves} above:
$$T(p,q) = T(p_n,q_n) \stackrel{\eps_n}{\longrightarrow} T(p_{n-1},q_{n-1}) \stackrel{\eps_{n-1}}{\longrightarrow} \dots \stackrel{\eps_2}{\longrightarrow} T(p_{1},q_{1}) \stackrel{\eps_1}{\longrightarrow} T(p_{0},q_{0})=U$$
Then these integers generated in the process of pinch moves satisfy the following five identities and constraints: 
\begin{itemize}
\item[(a)] $q_0 = 1$. 
\item[(b)] $p_0\geq 0$, and if $p_0 = 0$ or $1$, then $\eps_1 = -1$. 
\item[(c)] The value of $q_1$ is odd and $q_1 \geq 3$. 
\item[(d)] $p_1 = p_0q_1 - 2\eps_1$. 
\item[(e)] There exist positive even integers $m_1, m_2, \ldots, m_{n-1}$ such that for all $k\in \{2, 3, \ldots, n\}$, 
\begin{align}\label{EquationSequenceOfPinchingMoves}
p_{k} & = m_{k-1}p_{k-1} - \eps_{k-1}\eps_k p_{k-2}, \cr
q_{k} & = m_{k-1}q_{k-1}-\eps_{k-1}\eps_k q_{k-2}.
\end{align}
\end{itemize}
Conversely, suppose that the pair $(p,q)$ is generated as follows. Choose values $p_0$ and $q_1$ and signs $\eps_1, \ldots, \eps_n$ satisfying constraints (b) and (c), and choose any set of positive even integers $m_1, \ldots, m_{n-1}$. These choices together with the identities (a), (d), and (e) determine pairs of relatively prime integers $(p_k,q_k)$ for $k\in \{0,1,\ldots, n\}$. Furthermore, a pinch move applied to $T(p_k,q_k)$ yields $T(p_{k-1},q_{k-1})$, and the pinch move has sign $\eps_{k}$. In particular, the torus knot $T(p,q) = T(p_n, q_n)$ will first reduce to the unknot after $n$ pinch moves.

In short, the correspondence between $(p,q)$ and the data set $\{ n, p_0, q_1, \{\eps_k\}_{k=1}^n, \{m_k\}_{k=1}^{n-1}\}$ subject to contraints (a)--(e) above, is a bijection. 
\end{theorem}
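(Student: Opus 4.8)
The plan is to run both directions off a single observation: by Lemma~\ref{AuxiliaryLemmaOnTheRelationBetweenPQRAndS}, each $\eps$-move $T(p,q)\stackrel{\eps}{\longrightarrow}T(r,s)$ satisfies $rq-sp=2\eps$, so consecutive parameter pairs in \eqref{ChainOfPinchMoves} obey the determinant identity
$$D_k := p_{k-1}q_k - q_{k-1}p_k = 2\eps_k, \qquad k=1,\dots,n.$$
This identity will drive everything.

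For the forward direction, note first that (a)--(d) involve only the last move $T(p_1,q_1)\stackrel{\eps_1}{\longrightarrow}T(p_0,q_0)=U$; since this is a single pinch move reducing a torus knot to the unknot, they are precisely the content of the $n=1$ warm-up established immediately before the theorem, and nothing new is needed. For (e), fix $k\in\{1,\dots,n-1\}$ and consider the vector $w=(p_{k+1}+\eps_k\eps_{k+1}p_{k-1},\,q_{k+1}+\eps_k\eps_{k+1}q_{k-1})$. Computing the determinant of $w$ against $(p_k,q_k)$ and substituting $D_{k+1}=2\eps_{k+1}$ and $D_k=2\eps_k$ yields $0$, so $w$ is a scalar multiple of $(p_k,q_k)$; as $\gcd(p_k,q_k)=1$, we get $w=m_k(p_k,q_k)$ for a unique $m_k\in\mathbb Z$, which is exactly \eqref{EquationSequenceOfPinchingMoves} in both coordinates. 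Reducing the $q$-line $q_{k+1}+\eps_k\eps_{k+1}q_{k-1}=m_kq_k$ modulo $2$ (all $q_j$ are odd, parity being preserved under pinch moves) shows $m_k$ is even; and since $m_kq_k=q_{k+1}+\eps_k\eps_{k+1}q_{k-1}$ with $0<q_{k-1}<q_k<q_{k+1}$ (each pinch move strictly decreases the odd parameter, as $0\le s<q$), the right-hand side is positive in both sign cases, so $m_k>0$.

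For the converse, I would define $(p_k,q_k)$ from the data via (a), (d), (e) and verify each assertion by induction. The identity $D_k=2\eps_k$ propagates: $D_1=p_0q_1-p_1=2\eps_1$ by (d), and the recurrence gives $D_{k+1}=\eps_k\eps_{k+1}D_k$, hence $D_k=2\eps_k$ throughout; relative primality of $(p_k,q_k)$ is then automatic, since any common factor divides $D_k=\pm 2$ while $q_k$ is odd. Because every $m_k\ge 2$, the recurrence together with $q_0=1<3\le q_1$ and the constraints (b)--(d) giving $p_1\ge 2$ forces $q_{k-1}<q_k$ and $p_{k-1}<p_k$, so each $T(p_k,q_k)$ with $k\ge 1$ has $p_k\ge 2$, $q_k\ge 3$ and is knotted; thus the unknot is first reached at step $0$, i.e. after exactly $n$ moves. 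To confirm each step is a genuine $\eps_k$-pinch, set $h_k=\tfrac12(q_k-\eps_kq_{k-1})$ and $t_k=\tfrac12(p_k-\eps_kp_{k-1})$ (integers by parity), check $p_kh_k-q_kt_k=\tfrac{\eps_k}{2}D_k=1$ and $1\le h_k\le q_k-1$, and invoke Lemma~\ref{AuxiliaryLemmaOnHowToFindTAndH} to conclude $\eps_k(p_k-2t_k)=p_{k-1}$ and $\eps_k(q_k-2h_k)=q_{k-1}$, exactly the claimed move.

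Finally, the bijection is assembled by observing that the two constructions are mutually inverse on the level of parameters, the only subtle point being that the forward assignment $(p,q)\mapsto\{n,p_0,q_1,\{\eps_k\},\{m_k\}\}$ is well defined; this holds because Lemma~\ref{LemmaOnTheProofOfBatsonsPinchingMoveFormula} produces a single knot $T(|p-2t|,|q-2h|)$ from $T(p,q)$, so the reduction sequence, and hence all of the data, is determined by $(p,q)$. I expect the main obstacle to be not any isolated computation but the consistent bookkeeping of the sign and parity normalizations (keeping $q$ odd and $p>q$ in the odd-odd case, and tracking $\eps_k$ through $D_k$), particularly in the boundary cases $p_0\in\{0,1\}$, where the constraint $\eps_1=-1$ from (b) is exactly what keeps $p_1$ positive and preserves the strict monotonicity used above.
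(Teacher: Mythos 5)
Your proof is correct, and while your converse direction coincides with the paper's (the same $h_k=\tfrac12(q_k-\eps_kq_{k-1})$, $t_k=\tfrac12(p_k-\eps_kp_{k-1})$, the same B\'ezout check, and the same appeal to Lemma~\ref{AuxiliaryLemmaOnHowToFindTAndH}), your forward argument for part (e) takes a genuinely different and shorter route. The paper proves \eqref{EquationSequenceOfPinchingMoves} by induction on $n$: the base case reduces the identity of Lemma~\ref{AuxiliaryLemmaOnTheRelationBetweenPQRAndS} modulo $q_1$, and the inductive step introduces the auxiliary sequence $r_j=\tfrac12\eps_1(p_0q_j-p_j)$, establishes $r_jq_{j-1}-r_{j-1}q_j=\eps_1\eps_j$, and again works modulo $q_k$ to extract first the $q$-recursion and then, by substitution, the $p$-recursion. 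You instead observe that the vector $(p_{k+1}+\eps_k\eps_{k+1}p_{k-1},\,q_{k+1}+\eps_k\eps_{k+1}q_{k-1})$ has vanishing determinant against the primitive vector $(p_k,q_k)$ --- a two-line computation from $D_k=2\eps_k$ and $D_{k+1}=2\eps_{k+1}$ --- so it equals $m_k(p_k,q_k)$ for an integer $m_k$, giving both coordinates of the recursion simultaneously; parity of the odd $q_j$'s and the strict decrease $q_{k-1}<q_k<q_{k+1}$ then force $m_k$ even and positive. This avoids the induction, the modular arithmetic, and the auxiliary sequence entirely, and it makes explicit two points the paper leaves tacit in the converse (the propagation $D_{k+1}=\eps_k\eps_{k+1}D_k$, and relative primality of $(p_k,q_k)$ from $D_k=\pm2$ with $q_k$ odd). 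What the paper's longer route buys is that its auxiliary sequence is not throwaway: the $r_j$ of its proof is exactly the sequence $\rho_{1,n}$ of \eqref{DefinitionOfRhoKCommaN}, which the upsilon and signature computations of Sections~\ref{SectionOnComputingTheUpsilonInvariant} and~\ref{SectionForSignatureComputations} rely on heavily, so the paper is setting up machinery it needs anyway.
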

%
%
%
%
\begin{proof}
Let us start with a torus knot $T(p,q)$, where $p$ and $q$ satisfy the given conventions, and suppose that $T(p,q)$ first reduces to the unknot after $n$ pinch moves:
$$T(p,q) = T(p_n,q_n) \stackrel{\eps_n}{\longrightarrow} T(p_{n-1},q_{n-1}) \stackrel{\eps_{n-1}}{\longrightarrow} \dots \stackrel{\eps_2}{\longrightarrow} T(p_{1},q_{1}) \stackrel{\eps_1}{\longrightarrow} T(p_{0},q_{0})=U$$

The fact that $q_0=1$, $p_0\ge 0$, and $p_1 = p_0q_1-2\eps_1$ follows as in the \lq\lq warm-up exercise\rq\rq above. In addition, since parity is preserved by pinch moves, $q_1$ is odd, and since $T(p_1, q_1)$ is not the unknot, $q_1\geq 3$.

It only remains to show that if $n\ge 2$, the integers $p_k$ and $q_k$ satisfy the recursive relationships in~\eqref{EquationSequenceOfPinchingMoves}. We proceed by induction on $n$. 

For the base case ($n = 2$), suppose we have a pinch move sequence $T(p_2,q_2)\stackrel{\eps_2}{\longrightarrow}  T(p_1,q_1) \stackrel{\eps_1}{\longrightarrow} T(p_0, q_0) = U$. Substituting the known values for $q_0$ and $p_1$, we have:
$$T(p_2,q_2)\stackrel{\eps_2}{\longrightarrow}  T(p_0q_1 - 2\eps_1,q_1) \stackrel{\eps_1}{\longrightarrow} T(p_0, 1).$$
From  Lemma~\ref{AuxiliaryLemmaOnTheRelationBetweenPQRAndS}, we obtain 
$$(p_0q_1- 2\eps_1)q_2-q_1p_2 = 2 \eps_2$$
Modulo $q_1$ this yields $-2\eps_1q_2 \equiv 2\eps_2 (\text{mod }q_1)$, and since $q_1$ is odd, it follows that $q_2 \equiv - \eps_1\eps_2 (\text{mod }q_1)$. Write $q_2 = - \eps_1\eps_2+ m_1q_1$ for some integer $m_1$. Since both $q_1$ and $q_2$ are odd, $m_1$ is even, and since $q_2>1$, it follows that $m_1 \ge 2$.  Substituting this newfound equation for $q_2$ into the above expression we have
\begin{align*}
(p_0q_1-  2\eps_1)(-\eps_1\eps_2+m_1q_1)-q_1 p_2 & = 2\eps_2\cr
-\eps_1\eps_2p_0q_1+m_1p_0q_1^2+2\eps_2-  2\eps_1m_1q_1 -q_1p_2 & = 2\eps_2 \cr 
q_1(-\eps_1\eps_2p_0 +m_1p_0q_1 - 2\eps_1m_1-p_2) & = 0\cr
-\eps_1\eps_2p_0 +m_1p_0q_1 - 2\eps_1m_1 & = p_2.
\end{align*}
This formula for $p_2$ can be rewritten as follows:
\begin{align*} 
p_2 & =-\eps_1\eps_2p_0 +m_1p_0q_1 - 2\eps_1m_1  \cr
& = m_1(p_0q_1-2\eps_1) -\eps_1 \eps_2p_0 \cr
& =  {m_1 p_1 -\eps_1\eps_2p_0}.
\end{align*}
In conclusion, we find that there exists a positive even integer $m_1$ such that 
$$(p_2,q_2) = m_1\cdot (p_1,q_1) - \eps_1\eps_2(p_0,1),$$
which proves the recursive relationships in~\eqref{EquationSequenceOfPinchingMoves} for the base case $n = 2$. 

Now suppose that there is some integer $k$ with $2\leq k <n$ such that the recursive relationship~\eqref{EquationSequenceOfPinchingMoves} holds for all $j\leq k$. In particular, there exist positive even integers $m_1, m_2, \ldots, m_{k-1}$ such that for all $j \in \{2, 3, \ldots, k\}$, 
$$(p_{j},q_{j}) = m_{{j}-1}\cdot (p_{{j}-1},q_{j-1}) - \eps_{j-1}\eps_{j}(p_{j-2},q_{j-2}).$$
We want to show that such a relationship also holds for $j = k +1$. 

%

To make the inductive step run similarly to the base step, we introduce another object. For $j \in \{0, 1, 2, \ldots, k\}$, define $r_j\in \mathbb Z$ by 
\begin{equation} \label{DefinitionOfRn}
r_j =\frac{1}{2}\eps_1(p_0q_j-p_j). 
\end{equation}
Note that $r_j$ is an integer since $q_j$ is always odd, while $p_0$ and $p_j$ are of the same parity. Since $r_j$ is a linear combination of $p_j$ and $q_j$, and each of the latter two satisfies the same recursive relation \eqref{EquationSequenceOfPinchingMoves} for $j \in\{2, 3, \ldots, k\}$, it follows that $r_j$ also satisfies this relation, namely 
$$r_j = m_{j-1}r_{j-1}- \eps_{j-1}\eps_j r_{j-2}, \qquad \qquad j=2, \dots, k.$$
By way of example, $r_0 = 0$, $r_1 = 1$ and $r_2 = m_1$. For any $j=1,\dots, k$ we obtain    
\begin{align} \label{AuxiliaryRelationBetweenQnAndRn}
r_jq_{j-1} - r_{j-1}q_j  & = \textstyle \frac{\eps_1}{2}(p_0q_j-p_j)q_{j-1} - \frac{\eps_1}{2} (p_0q_{j-1} - p_{j-1})q_j  \cr 
& = \textstyle \frac{\eps_1}{2} ( p_{j-1}q_j - q_{j-1}p_j )  \cr
& = \eps_1\eps_j.
\end{align}
In the above, we relied on the result of Lemma \ref{AuxiliaryLemmaOnTheRelationBetweenPQRAndS} by which $p_{j-1}q_j-q_{j-1}p_j=2\eps_j$ provided $T(p_j,q_j)$ reduces to $T(p_{j-1},q_{j-1})$ via an $\eps_j$-move.  Relying on this same lemma once more, in the case that $j = k$, we have: 
$$ p_{k} q_{k+1} - q_{k} p_{k+1} = 2\eps_{k+1}. $$
Writing $p_{k} = p_0q_{k} - 2\eps_1 r_{k}$ gives
$$ (p_0q_{k}-2\eps_1 r_{k}) q_{k+1} - q_{k} p_{k+1} = 2\eps_{k+1}.$$
Reducing this equation modulo $q_{k}$ yields 
\begin{align*}
-2 \eps_1 r_{k}q_{k+1} & \equiv 2\eps_{k+1}\, (\text{mod }q_{k}) \qquad (\text{ multipy by } q_{k-1})\cr 
-2\eps_1 q_{k-1} r_{k}q_{k+1} & \equiv 2\eps_{k+1}q_{k-1} \, (\text{mod }q_{k}) \qquad (\text{ use \eqref{AuxiliaryRelationBetweenQnAndRn}}) \cr 
-2\eps_1(r_{k-1}q_{k}+\eps_1\eps_{k} ) q_{k+1} & \equiv 2\eps_{k+1}q_{k-1} \, (\text{mod }q_{k}) \cr
-2\eps_{k} q_{k+1} & \equiv 2\eps_{k+1}q_{k-1} \, (\text{mod }q_{k}) \qquad (q_{k} \text{ is odd, so divide by -2})\cr
q_{k+1} & \equiv -\eps_{k}\eps_{k+1}q_{k-1} \, (\text{mod }q_{k}) \cr
q_{k+1} & =m_{k}q_{k} -\eps_{k}\eps_{k+1}q_{k-1} \qquad \text{(for some integer $m_{k}$)}.  
\end{align*}
Since $q_{k-1}$, $q_{k}$ and $q_{k+1}$ all have the same parity, it follows that $m_{k}$ must be even. Moreover, the double inequality $q_{k-1}<q_{k}<q_{k+1}$ forces $m_{k}\ge 2$. Hence $q_{k+1}$ satisfies the recursive relationship \eqref{EquationSequenceOfPinchingMoves}, and it only remains to show that $p_{k+1}$ does, as well. We return to the equation $ p_{k} q_{k+1} - q_{k} p_{k+1} = 2\eps_{k+1}$ and substitute our newly found formula for $q_{k+1}$. (In the third line below, we rely on \eqref{AuxiliaryRelationBetweenQnAndRn}.)
\begin{align*}
(p_0q_k-2\eps_1r_k)(m_{k}q_k-\eps_{k}\eps_{k+1} q_{k-1})-q_kp_{k+1} & = 2\eps_{k+1}\cr
q_k(p_0m_{k}q_k -\eps_{k}\eps_{k+1}p_0q_{k-1} -2\eps_1r_k m_{k}  -p_{k+1}) & = 2\eps_{k+1}-2\eps_1\eps_{k}\eps_{k+1}r_kq_{k-1} \cr
q_k(p_0m_{k}q_k -\eps_{k}\eps_{k+1}p_0q_{k-1} -2\eps_1r_k m_{k}  -p_{k+1}) & = 2\eps_{k+1}-2\eps_1\eps_{k}\eps_{k+1}(r_{k-1}q_{k}+\eps_1\eps_{k}) \cr
q_k(p_0m_{k}q_k -\eps_{k}\eps_{k+1}p_0q_{k-1} -2\eps_1r_k m_{k}  -p_{k+1}) & = -2\eps_1\eps_{k}\eps_{k+1}r_{k-1}q_{k}. 
\end{align*}
Dividing both sides by $q_k$ leads to 
\begin{align*}
p_{k+1} & = p_0m_{k}q_k -\eps_{k}\eps_{k+1}p_0q_{k-1} -2\eps_1r_k m_{k}+2\eps_1\eps_{k}\eps_{k+1}r_{k-1}   \cr
& = m_k(p_0q_k -2\eps_1r_k) - \eps_k\eps_{k+1}(p_0q_{k-1} - 2\eps_1r_{k-1})\cr
& = m_k p_k - \eps_k \eps_{k+1} p_{k-1}
\end{align*}
completing the induction. \\

Now we turn to prove the second half of the theorem. Choose integers $p_0\ge 0$, $q_1\ge 3$ odd, and a choose signs $\eps_1, \ldots, \eps_n$. Moreover, choose positive even integers $m_k$, for $k=1, \dots, n-1$. Set $q_0=1$, $p_1=p_0q_1-2\eps_1$ and use \eqref{EquationSequenceOfPinchingMoves} to create the sequence of integer pairs $(p_k,q_k)$ for $k=0,\dots, n$.

We need to show that $T(p_{k-1},q_{k-1})$ arises from $T(p_k,q_k)$ via an $\eps_k$--move. For $n=1$ this was already shown in the \lq\lq warm-up exercise\rq\rq  preceding this proof. We can do something very similar in general. For $n\ge 2$ and $1<k\le n$, define the integers $h_k$ and $t_k$ as
$$h_k = \frac{1}{2} (q_k-\eps_kq_{k-1}) \qquad \text{ and } \qquad t_k =  \frac{1}{2} (p_k-\eps_kp_{k-1}).$$ 
Then
$$p_kh_k-q_kt_k = \frac{1}{2}p_k(q_k-\eps_kq_{k-1}) - \frac{1}{2}q_k(p_k-\eps_kp_{k-1}) = \frac{\eps_k}{2} (q_kp_{k-1} - p_kq_{k-1}) = 1.  $$
Since $q_k>q_{k-1}$ it follows that $h_k\in \{1,\dots, q_k-1\}$ and so Lemma \ref{AuxiliaryLemmaOnHowToFindTAndH} implies that an $\eps_k$--move turns the torus knot $T(p_k,q_k)$ into the torus knot 
$$T(\eps_k(p_k-2t_k), \eps_k(q_k-2h_k)) = T(p_{k-1}, q_{k-1}), $$
as desired. Observe that by our choice of $q_1\geq 3$, it follows that none of the torus knots are unknotted except for $T(p_0, q_0) = T(p_0, 1)$. Therefore $T(p,q) = T(p_n, q_n)$ is first unknotted after $n$ pinch moves. This completes the proof of the theorem. 
\end{proof}
\section{Computing $\upsilon$ for torus knots} \label{SectionOnComputingTheUpsilonInvariant}
\subsection{Definition of $\mathbf{\upsilon}$, and a recursion formula}
Let $\mathcal C$ denote the {\em smooth concordance group} of oriented knots in $S^3$. Its elements are equivalence classes $[K]$ indexed by oriented knots $K$ in $S^3$, with two knots $K_0$ and $K_1$ being equivalent if there exists a smooth embedding $\varphi :S^1\times [0,1]\to S^3\times [0,1]$, such that $\varphi|_{S^1\times \{i\}}=K_i$ for $i=0,1$. The operation on $\mathcal C$ is given by connect summing knots, the identity element is the equivalence class of the unknot.  

Using a modified differential on the knot Floer chain complex, Ozsv\'ath, Stipsicz and Szab\'o \cite{OSSUpsilon:2017} defined a function $\Upsilon :\mathcal C\times [0,2]\to \mathbb R$, with the property that its restriction to a slice $\mathcal C\times \{t\}$ of the domain (for any $t\in [0,2]$), is a group homomorphism $\Upsilon (t):\mathcal C\to \mathbb R$. For a knot $K$, the value of $\Upsilon _K(t)$ is the analogue of what the Heegaard Floer knot invariant $\tau(K)$ \cite{OSTau:2003} is for the original differential on the knot Floer chain complex. 

The concordance homomorphism $\upsilon :\mathcal C\to \mathbb Z$ introduced by Ozsv\'ath, Stipsicz and Szab\'o \cite{OSS:2017}, is defined as 
$$\upsilon (K) = \Upsilon _K(1). $$
The fact that its image lies in $\mathbb Z$ follows easily from the general property of $\Upsilon$ by which $\Upsilon _K(\frac{m}{n}) \in \frac{1}{n}\mathbb Z$, see \cite{OSSUpsilon:2017}. The invariant $\upsilon$ is simply referred to as the {\em upsilon invariant of $K$}, and is distinguished from $\Upsilon _K(t)$ by referring to the latter as the {\em large upsilon invariant of $K$}. 

Feller and Krcatovich proved the following formulas for $\Upsilon$ for torus knots (Propositions 2.1 and 2.2 in \cite{FellerKrcatovich:2017}):
\begin{align} \label{FellerKrcatovichRecursiveFormulas}
\Upsilon_{T(n,n+1)}(t) & \textstyle = -i(i+1) -\frac{1}{2}n(n-1-2i)t,   \cr
\Upsilon _{T(a,b)}(t) & = \Upsilon_{T(a,b-a)}(t) + \Upsilon_{T(a,a+1)}(t).
\end{align}  
The first formula is valid for any $n\in \mathbb N$ and any choice of $t\in [\frac{2i}{n},\frac{2i+1}{n}]$, while the second formula holds for relatively prime integers $a, b$ with $0<a<b$. A combination of the two easily leads to our first result on $\upsilon$. 
\begin{lemma} \label{LemmaRecursionFormulaForUpsilon}
Let $0<a<b$ be relatively prime integers, then 
\begin{equation} \label{EquationRecursionFormulaForUpsilon}
\upsilon (T(a,b)) = \left\{
\begin{array}{ll}
\upsilon (T(a,b-a)) - \frac{1}{4}a^2 & \quad ; \quad a \text{ is even}, \cr 
& \cr
\upsilon (T(a,b-a)) - \frac{1}{4}(a^2-1) & \quad ; \quad a \text{ is odd}. \cr 
\end{array}
\right. 
\end{equation}
We shall refer to this formula as the \lq\lq Recursive Formula for $\upsilon$.\rq\rq
\end{lemma}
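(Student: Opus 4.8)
The plan is to derive the Recursive Formula for $\upsilon$ directly from the Feller--Krcatovich formulas \eqref{FellerKrcatovichRecursiveFormulas} by specializing the continuous parameter $t$ to the value $t=1$, since $\upsilon(K) = \Upsilon_K(1)$ by definition. The second Feller--Krcatovich formula already gives the additive splitting $\Upsilon_{T(a,b)}(t) = \Upsilon_{T(a,b-a)}(t) + \Upsilon_{T(a,a+1)}(t)$ for relatively prime $0 < a < b$, so evaluating at $t=1$ immediately yields
\begin{equation*}
\upsilon(T(a,b)) = \upsilon(T(a,b-a)) + \Upsilon_{T(a,a+1)}(1).
\end{equation*}
Thus the entire content of the lemma reduces to computing the single quantity $\Upsilon_{T(a,a+1)}(1)$ and showing it equals $-\tfrac14 a^2$ when $a$ is even and $-\tfrac14(a^2-1)$ when $a$ is odd.

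To evaluate $\Upsilon_{T(a,a+1)}(1)$, I would use the first Feller--Krcatovich formula, $\Upsilon_{T(n,n+1)}(t) = -i(i+1) - \tfrac12 n(n-1-2i)t$, valid for $t \in [\tfrac{2i}{n}, \tfrac{2i+1}{n}]$, with $n = a$. The key step is to identify the correct integer $i$ so that $t=1$ lies in the prescribed interval $[\tfrac{2i}{a}, \tfrac{2i+1}{a}]$, i.e. $2i \le a \le 2i+1$. This is exactly where the parity of $a$ enters, and I expect this to be the main (though modest) obstacle: one must match the floor/ceiling arithmetic to the two parity cases and confirm that $t=1$ genuinely lands in the closed interval rather than at a breakpoint causing ambiguity. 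When $a = 2i$ is even, the condition $2i \le a \le 2i+1$ forces $i = a/2$, and substituting gives
\begin{equation*}
\Upsilon_{T(a,a+1)}(1) = -\tfrac{a}{2}\Bigl(\tfrac{a}{2}+1\Bigr) - \tfrac12 a\Bigl(a-1-a\Bigr) = -\tfrac{a^2}{4} - \tfrac{a}{2} + \tfrac{a}{2} = -\tfrac14 a^2.
\end{equation*}
When $a = 2i+1$ is odd, the condition forces $i = (a-1)/2$, and a parallel substitution with $n-1-2i = a - 1 - (a-1) = 0$ collapses the linear term, leaving $\Upsilon_{T(a,a+1)}(1) = -\tfrac{a-1}{2}\cdot\tfrac{a+1}{2} = -\tfrac14(a^2-1)$.

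Assembling the two cases completes the proof: substituting each value of $\Upsilon_{T(a,a+1)}(1)$ back into the additive splitting reproduces exactly the two branches of \eqref{EquationRecursionFormulaForUpsilon}. I should take care that when $a$ is even the value $t=1$ sits at the right endpoint $\tfrac{2i+1}{a} = \tfrac{a+1}{a}$ — wait, that is $>1$, so $t=1$ is interior — and when $a$ is odd it sits at the left endpoint $\tfrac{2i}{a} = \tfrac{a-1}{a} < 1$, again interior; in either case $t=1$ lies strictly inside the valid interval, so there is no boundary subtlety and a single clean substitution suffices for each parity. The whole argument is a short computation, with the only genuine care being the parity-dependent choice of $i$.
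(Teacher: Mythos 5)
Your proposal is correct and follows essentially the same route as the paper's own proof: the paper likewise evaluates the two Feller--Krcatovich formulas at $t=1$, choosing $i=\tfrac{a}{2}$ in the even case and $i=\tfrac{a-1}{2}$ in the odd case, and arrives at the same values $\Upsilon_{T(a,a+1)}(1)=-\tfrac14 a^2$ and $-\tfrac14(a^2-1)$ before adding them to $\upsilon(T(a,b-a))$. One small slip in your endpoint check: with these choices $t=1$ is in fact an \emph{endpoint} of $[\tfrac{2i}{a},\tfrac{2i+1}{a}]$ (the left endpoint $\tfrac{2i}{a}=1$ when $a$ is even, the right endpoint $\tfrac{2i+1}{a}=1$ when $a$ is odd), not an interior point — but this is harmless, since the formula is valid on the closed interval.
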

\begin{proof}
If $a$ is even, the first formula in \eqref{FellerKrcatovichRecursiveFormulas} with the choices of $n=a$, $i=\frac{a}{2}$ and $t=1$, yields
$$\textstyle \upsilon(T(a,a+1)) = \Upsilon_{T(a,a+1)}(1) = -\frac{a}{2}(\frac{a}{2}+1) -\frac{1}{2}a(a-1-a) = - \frac{a^2}{4}.$$
If $a$ is odd, the same formula but with the choices of $n=a$, $i=\frac{a-1}{2}$ and $t=1$, yields
$$\textstyle \upsilon(T(a,a+1)) = \Upsilon_{T(a,a+1)}(1) = -\frac{a-1}{2}(\frac{a-1}{2}+1) -\frac{1}{2}a(a-1-(a-1)) = - \frac{a^2-1}{4}.$$
Using the second formula in \eqref{FellerKrcatovichRecursiveFormulas} with $t=1$ leads to 
$$\upsilon (T(a,b)) = \Upsilon_{T(a,b)}(1) =  \Upsilon_{T(a,b-a)}(1) + \Upsilon_{T(a,a+1)}(1) = \upsilon(T(a,b-a)) + \upsilon (T(a,a+1)).$$
Together, these computations prove the claimed recursive formula. 
\end{proof}
The recursion formula for $\upsilon$ from the preceding lemma is the main tool for computing $\upsilon$ for torus knots $T(p,q)$. One may suspect that given a torus knot $T(p,q)$, and having formed the sequence of torus knots $T(p_k,q_k)$ using pinch moves as in \eqref{EquationSequenceOfPinchingMoves}, one may be able to compute $\upsilon( T(p_k,q_k))$ from knowing $\upsilon (T(p_{k-1},q_{k-1}))$. We have not been able to do this, as the recursion formula for $\upsilon $ from Lemma \ref{LemmaRecursionFormulaForUpsilon} is sufficiently  dissimilar from the relation \eqref{Definition Of (r,s) From (p,q)} (with $(p,q) = (p_k,q_k)$ and $(r,s) = (p_{k-1},q_{k-1})$) which relates $(p_k,q_k)$ to $(p_{k-1}, q_{k-1})$. Instead, we resort to defining a new, doubly indexed sequence of pairs of relatively prime integers, that in a way interpolates between \eqref{EquationRecursionFormulaForUpsilon} and \eqref{Definition Of (r,s) From (p,q)}.  
\subsection{A doubly indexed sequence associated to $\mathbf{(p,q)}$}
Let $p,q>1$ be relatively prime integers, and let $n$, $p_0$, $q_1$, $\{m_k\}_{k=1}^{n-1}$ and $\{\eps_k\}_{k=1}^n$ be obtained from $(p,q)$ as in Theorem \ref{TheoremOnReversingPinchingMoves}. 
We then define a doubly indexed sequence $\rho_{k,n}$ of integers for $k\in \mathbb N$ and $n\ge k-1$, as follows:
\begin{align} \label{DefinitionOfRhoKCommaN}
\rho_{1,n} &  = \frac{1}{2}\eps_1 (p_0q_n-p_n), & \quad &  n\ge 0,\cr & \cr
\rho _{2,n} & = \eps_1\eps_2( q_1\rho_{1,n} -q_n ), & \quad & n\ge 1,  \cr & \cr
\rho_{k,n}  & = \eps_{k-1}\eps_k (m_{k-2}\rho_{k-1,n} - \rho_{k-2,n}) &\quad  &  k \ge 3 \text{ and } n\ge k-1.  
\end{align}
For simplicity of notation we shall use $r_n$ and  $s_n$ to denote $\rho_{k,n}$ for $k=1,2$ respectively, that is 
\begin{equation} \label{EquationDefinitionOfRnAndSn}
r_n= \rho_{1,n} = \frac{1}{2}\eps_1(p_0q_n-p_n), \qquad \qquad s_n = \rho_{2,n} = \eps_1\eps_2(q_1r_n-q_n).
\end{equation}
Our definition of $r_n$ here agrees with the use of $r_n$ in the proof of Theorem \ref{TheoremOnReversingPinchingMoves}. 
\begin{theorem} \label{TheoremSomePropertiesOfRhoKN}
For all $k\ge 1$ we obtain 
\begin{equation} \label{EquationInitialValuesOfRhoK}
\rho _{k,k-1} = 0, \qquad \rho _{k,k} = 1, \qquad \rho _{k,k+1} =  m_{k}
\end{equation}
Moreover, the inequalities $\rho_{k,n-1} < \rho_{k,n}$ and $\rho_{k,n} < \rho_{k-1,n}$ hold for all $k$ and $n$ for which the terms appearing in them are defined.   
\end{theorem}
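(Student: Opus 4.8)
The plan is to reduce everything to a single structural observation: although $\rho_{k,n}$ is defined by a recursion in its \emph{first} index $k$, for each fixed $k$ the sequence $n\mapsto\rho_{k,n}$ satisfies the \emph{same} prefactor-free three-term recursion in its \emph{second} index $n$ that $p_n,q_n,r_n$ obey, namely
$$\rho_{k,n} = m_{n-1}\rho_{k,n-1} - \eps_{n-1}\eps_n\rho_{k,n-2},\qquad n\ge k+1.$$
I would prove this first, by induction on $k$. The cases $k=1,2$ are immediate: $\rho_{1,n}=r_n$ satisfies this recursion by the computation in the proof of Theorem~\ref{TheoremOnReversingPinchingMoves}, and $\rho_{2,n}=\eps_1\eps_2(q_1 r_n-q_n)$ is a fixed linear combination of $r_n$ and $q_n$, each of which satisfies it. For the inductive step, \eqref{DefinitionOfRhoKCommaN} exhibits $\rho_{k,n}$ as a linear combination of $\rho_{k-1,n}$ and $\rho_{k-2,n}$ with coefficients independent of $n$; since those two sequences satisfy the $n$-recursion by hypothesis, so does $\rho_{k,n}$.

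Next I would establish the three initial values \eqref{EquationInitialValuesOfRhoK}, again by induction on $k$. After checking $k=1,2$ directly from \eqref{EquationDefinitionOfRnAndSn} together with $q_2=m_1q_1-\eps_1\eps_2$ and \eqref{AuxiliaryRelationBetweenQnAndRn}, the inductive step combines the defining $k$-recursion with the $n$-recursion from the previous paragraph to supply the one \lq\lq off-diagonal\rq\rq\ value it needs. Concretely, $\rho_{k,k-1}=\eps_{k-1}\eps_k(m_{k-2}\rho_{k-1,k-1}-\rho_{k-2,k-1})=\eps_{k-1}\eps_k(m_{k-2}-m_{k-2})=0$ uses only $\rho_{k-1,k-1}=1$ and $\rho_{k-2,k-1}=m_{k-2}$ from the hypothesis; for $\rho_{k,k}$ one first computes $\rho_{k-2,k}=m_{k-1}m_{k-2}-\eps_{k-1}\eps_k$ from the $n$-recursion, giving $\rho_{k,k}=\eps_{k-1}\eps_k\cdot\eps_{k-1}\eps_k=1$; and then $\rho_{k,k+1}=m_k$ drops out of the $n$-recursion as $\rho_{k,k+1}=m_k\rho_{k,k}-\eps_k\eps_{k+1}\rho_{k,k-1}=m_k$.

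The two monotonicity statements then both follow from a single elementary lemma about any integer sequence $x_n$ obeying $x_n=m_{n-1}x_{n-1}-\eps_{n-1}\eps_n x_{n-2}$ with $m_{n-1}\ge2$: if $0\le x_{n_0-1}<x_{n_0}$ then $x_n$ is strictly increasing (hence positive) for $n\ge n_0$, while if merely $0\le x_{n_0-1}\le x_{n_0}$ with $x_{n_0}>0$ then $x_n>0$ for all $n\ge n_0-1$. Its proof is a one-line case split on $\eps_{n-1}\eps_n=\pm1$: the $+1$ case uses $m_{n-1}-1\ge1$ and the inductive gap $x_{n-1}\ge x_{n-2}$, and the $-1$ case is automatic. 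Applying the strict form with $x_n=\rho_{k,n}$ and base $\rho_{k,k-1}=0<1=\rho_{k,k}$ yields $\rho_{k,n-1}<\rho_{k,n}$. For the second inequality I note that $D_n:=\rho_{k-1,n}-\rho_{k,n}$ also satisfies the $n$-recursion (by linearity and the first step), with $D_{k-1}=1$ and $D_k=m_{k-1}-1\ge1$; the non-strict form then gives $D_n>0$, i.e.\ $\rho_{k,n}<\rho_{k-1,n}$.

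The main obstacle to anticipate is precisely the mismatch between the two recursions: the defining recursion \eqref{DefinitionOfRhoKCommaN} carries the sign prefactor $\eps_{k-1}\eps_k$ and runs in $k$, so it is ill-suited to monotonicity arguments, whereas the $n$-recursion is prefactor-free and has transparent positivity. Getting the $n$-recursion in place is therefore the crux, and it is what makes both the initial-value induction and the inequalities routine. A secondary nuisance is the edge case $m_{k-1}=2$ in the second inequality, where $D_{k-1}=D_k$ and strict growth of $D_n$ can fail at the first step; this is exactly why I state the lemma in two forms and claim only positivity (which is all that is needed) for $D_n$.
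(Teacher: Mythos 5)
Your proof is correct and follows essentially the same route as the paper's: first establish the prefactor-free recursion in the second index by linearity, then compute the initial values by induction on $k$, and finally prove both inequalities by induction on $n$ with the same $\pm 1$ case split — the paper's second induction, which shows $\rho_{k-1,n}-\rho_{k,n}$ is non-decreasing, is exactly your non-strict lemma applied to $D_n$. The only cosmetic differences are your slightly more economical derivation of $\rho_{k,k+1}=m_k$ directly from the $n$-recursion and the packaging of the two monotonicity arguments into one lemma; just note that the non-strict lemma's conclusion should be asserted only for $n\ge n_0$ (positivity at $n_0-1$ does not follow if $x_{n_0-1}=0$), which is harmless in your application since $D_{k-1}=1>0$.
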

\begin{proof}
Since $p_n$ and $q_n$ satisfy the recursive relations 
$$p_n = m_{n-1}p_{n-1}-\eps_{n-1}\eps_n p_{n-2} \qquad \text{ and } \qquad q_n=m_{n-1}q_{n-1} - \eps_{n-1}\eps_n q_{n-2},$$ 
and since $\rho_{1,n}$ is a linear combination of $p_n$ and $q_n$, then $\rho_{1,n}$ satisfies the same recursive relation, that is $\rho_{1,n} = m_{n-1}\rho_{1,n-1}-\eps_{n-1}\eps_n\rho_{1,n-2}$ for all $n\ge 2$. Next, since $\rho_{2,n}$ is a linear combination of $q_n$ and $\rho_{1,n}$, then $\rho_{2,n}$ satisfies the same recursive relation also, that is $\rho_{2,n} = m_{n-1}\rho_{2,n-1}-\eps_{n-1}\eps_n\rho_{2,n-2}$ for all $n\ge 3$. Continuing this argument inductively shows that 
\begin{equation} \label{EquationTheSecondRecursionForRho}
\rho_{k,n} = m_{n-1}\rho_{k,n-1}-\eps_{n-1}\eps_n\rho_{k,n-2}, \qquad \text{ for all } n \ge k+1.
\end{equation}
The values for $r_n = \rho_{1,n}$ and $s_n= \rho_{2,n}$ for $n$ small can be explicitly computed, and are given by 
$$\begin{array}{lclcl}
r_0 =0          &  \qquad &               \cr
r_1 =1          &  \qquad & s_1 = 0       \cr
r_2 =m_1        &  \qquad & s_2 = 1     \cr
r_3 = m_1m_2-\eps_2\eps_3   &  \qquad & s_3= m_2  \cr
\end{array}$$
We shall prove the relations \eqref{EquationInitialValuesOfRhoK} by induction on $k$, to which the preceding formulas represent the base. Suppose that \eqref{EquationInitialValuesOfRhoK} has been shown to hold for all $k\le k_0$ for some $k_0\ge 2$. From \eqref{EquationTheSecondRecursionForRho} we obtain
\begin{align} \label{EquationOtherValuesOfRho}
\rho_{k_0-1, k_0+1} & = m_{k_0} \rho_{k_0-1,k_0} -\eps_{k_0}\eps_{k_0+1} \rho_{k_0-1,k_0-1} = m_{k_0} m_{k_0-1} - \eps_{k_0}\eps_{k_0+1},\cr 
\rho_{k_0-1, k_0+2} & = m_{k_0+1} \rho_{k_0-1,k_0+1} - \eps_{k_0+1}\eps_{k_0+2} \rho_{k_0-1,k_0} \cr
& = m_{k_0+1} m_{k_0} m_{k_0-1} - \eps_{k_0}\eps_{k_0+1}m_{k_0+1} - \eps_{k_0+1}\eps_{k_0+2}  m_{k_0-1}.\cr 
\end{align}
From these we find 
\begin{align*} 
\rho_{k_0+1,k_0} & = \eps_{k_0}\eps_{k_0+1}( m_{k_0-1}\rho_{k_0,k_0} - \rho_{k_0-1,k_0})\cr
& = \eps_{k_0}\eps_{k_0+1}(m_{k_0-1} - m_{k_0-1}) \cr
& = 0, \cr & \cr  
\rho_{k_0+1,k_0+1} & = \eps_{k_0}\eps_{k_0+1}(m_{k_0-1}\rho_{k_0,k_0+1} - \rho_{k_0-1,k_0+1}) \cr
& = \eps_{k_0}\eps_{k_0+1}(  m_{k_0-1}m_{k_0}  -(m_{k_0} m_{k_0-1} - \eps_{k_0}\eps_{k_0+1}) )  \cr
& = 1, \cr & \cr
\rho_{k_0+1,k_0+2} & = \eps_{k_0}\eps_{k_0+1}(m_{k_0-1}\rho_{k_0,k_0+2} - \rho_{k_0-1,k_0+2}) \cr 
& = \eps_{k_0}\eps_{k_0+1} [  m_{k_0-1}(m_{k_0+1}m_{k_0} -\eps_{k_0+1}\eps_{k_0+2})   \cr
& \qquad \qquad -(m_{k_0+1} m_{k_0} m_{k_0-1} - \eps_{k_0}\eps_{k_0+1} m_{k_0+1} -\eps_{k_0+1}\eps_{k_0+2}  m_{k_0-1}) ]\cr
& = m_{k_0+1}, 
\end{align*}
completing the proof of \eqref{EquationInitialValuesOfRhoK}. 

Next we turn to the inequality $\rho_{k,n-1} < \rho_{k,n}$ which we seek to prove for all $k\ge 1$ and all $n\ge k-1$. It is easy to see that the inequality holds true for $n=k, k+1$ given the values of $\rho$ from \eqref{EquationInitialValuesOfRhoK} and the fact that $m_k\ge 2$. We take this as the basis of induction on $n$, with $k$ fixed but arbitrary. To continue the inductive reasoning, assume that $\rho_{k,n-1} < \rho_{k,n}$ for all $k\le n \le n_0$ for some $n_0>k$. Then by relying on \eqref{EquationTheSecondRecursionForRho} we obtain $\rho_{k,n_0+1}  = m_{n_0} \rho_{k,n_0} - \eps_{n_0-1}\eps_{n_0}\rho_{k,n_0-1}$. If $\eps_{n_0-1}\eps_{n_0}=-1$ if follows that $\rho_{k,n_0+1} > m_{n_0} \rho_{k,n_0} >\rho_{k,n_0}$ proving the claim. If $\eps_{n_0-1}\eps_{n_0}=1$ then 
\begin{align*}
\rho_{k,n_0+1} & = m_{n_0} \rho_{k,n_0} - \rho_{k,n_0-1} \cr
& = (m_{n_0}-1)\rho_{k,n_0} + (\rho_{k,n_0}-\rho_{k,n_0-1}) \cr
& >  (m_{n_0}-1)\rho_{k,n_0} \cr
& \ge \rho_{k,n_0}
\end{align*} 
completing the inductive proof of $\rho_{k,n-1}<\rho_{k,n}$ for all $k\ge1$ and all $n\ge k-1$ . 

We now turn to the inequality $\rho_{k,n}< \rho_{k-1,n}$ valid for $k\ge 2$ and $n\ge k-1$. We shall prove this inequality by proving, inductively on $n$, the inequality 
$$\rho_{k-1,n} - \rho_{k,n}\ge \rho_{k-1,n-1} - \rho_{k,n-1}.$$
This inequality is easily seen to be valid for $n=k, k+1$ given the values of $\rho$ from \eqref{EquationInitialValuesOfRhoK} and \eqref{EquationOtherValuesOfRho}, thus proving the basis of induction. For the step of the  induction, assume that $\rho_{k-1,n} - \rho_{k,n}\ge \rho_{k-1,n-1} - \rho_{k,n-1}$ for all $n$ with  $k\le n\le n_0$ for some $n_0>k-1$. In particular then $\rho_{k-1,n} - \rho_{k,n}\ge \rho_{k-1,k-1} - \rho_{k,k-1} = 1 >0$ for all $n\le n_0$. The following argument relies on the recursive relation \eqref{EquationTheSecondRecursionForRho}.
\begin{align*}
\rho_{k-1,n_0+1} - \rho_{k,n_0+1}  & = [m_{n_0}\rho_{k-1,n_0} -\eps_{n_0}\eps_{n_0+1}  \rho_{k-1,n_0-1}] - [m_{n_0}\rho_{k,n_0} - \eps_{n_0}\eps_{n_0+1}  \rho_{k,n_0-1}]  \cr
& =m_{n_0}(\rho_{k-1,n_0} - \rho_{k,n_0})-\eps_{n_0}\eps_{n_0+1} ( \rho_{k-1,n_0-1}- \rho_{k,n_0-1}).
\end{align*}
If $\eps_{n_0}\eps_{n_0+1}=-1$ then the above show that 
$$\rho_{k-1,n_0+1} - \rho_{k,n_0+1} > m_{n_0}(\rho_{k-1,n_0} - \rho_{k,n_0}) > \rho_{k-1,n_0} - \rho_{k,n_0}.$$
If $\eps_{n_0}\eps_{n_0+1}=1$ then
\begin{align*}
\rho_{k-1,n_0+1} - \rho_{k,n_0+1} & =m_{n_0}(\rho_{k-1,n_0} - \rho_{k,n_0})-\eps_{n_0}\eps_{n_0+1} ( \rho_{k-1,n_0-1}- \rho_{k,n_0-1}) \cr
& = (m_{n_0}-1) (\rho_{k-1,n_0} - \rho_{k,n_0})+ [ (\rho_{k-1,n_0} - \rho_{k,n_0}  ) -(\rho_{k-1,n_0-1}  - \rho_{k,n_0-1}  ) ]  \cr
& \ge (m_{n_0}-1) (\rho_{k-1,n_0} - \rho_{k,n_0}) \cr
& \ge \rho_{k-1,n_0} - \rho_{k,n_0}.
\end{align*} 
If follows that $\rho_{k-1,n} - \rho_{k,n} \ge \rho_{k-1,k-1} - \rho_{k,k-1} = 1>0$ for all $k\ge 2$ and all $n\ge k-1$, and thus that $\rho_{k-1,n}>\rho_{k,n}$, as claimed.  
\end{proof}
\begin{corollary} \label{CorollaryRelativeSizeOfQnAndRn}
For all $n\ge 1$, $2r_n<q_n$. 
\end{corollary}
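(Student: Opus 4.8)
The plan is to prove $2r_n < q_n$ by relating the doubly indexed quantities $\rho_{k,n}$ back to the sequence $\rho_{1,n} = r_n$ and the torus knot parameter $q_n$, using the inequalities already established in Theorem~\ref{TheoremSomePropertiesOfRhoKN}. First I would observe that by the definition \eqref{EquationDefinitionOfRnAndSn} we have $s_n = \rho_{2,n} = \eps_1\eps_2(q_1 r_n - q_n)$, so that $q_n = q_1 r_n - \eps_1\eps_2 s_n$. Since Theorem~\ref{TheoremSomePropertiesOfRhoKN} gives the strict inequality $\rho_{2,n} < \rho_{1,n}$, i.e. $s_n < r_n$, and also $r_n = \rho_{1,n} > 0$ for $n \ge 1$ (both monotonicity facts from the theorem, together with $r_1 = 1$), I can try to bound $q_n$ from below in terms of $r_n$.

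The key computation is to rewrite $q_n - 2r_n$ in a form where positivity is manifest. Using $q_n = q_1 r_n - \eps_1\eps_2 s_n$ and recalling $q_1 \ge 3$ (constraint (c) in Theorem~\ref{TheoremOnReversingPinchingMoves}), I would estimate
\begin{align*}
q_n - 2r_n &= q_1 r_n - \eps_1\eps_2 s_n - 2r_n \\
&= (q_1 - 2)r_n - \eps_1\eps_2 s_n \\
&\ge r_n - \eps_1\eps_2 s_n,
\end{align*}
where the last step uses $q_1 - 2 \ge 1$ and $r_n > 0$. It then suffices to show $r_n - \eps_1\eps_2 s_n > 0$, i.e. $r_n > \eps_1\eps_2 s_n$. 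When $\eps_1\eps_2 = -1$ this is immediate since $r_n > 0 > -s_n$ would need checking but more directly $r_n + s_n$: I must be careful here. The cleaner route is: when $\eps_1\eps_2 = 1$ we need $r_n > s_n$, which is exactly $\rho_{1,n} > \rho_{2,n}$ from the theorem; when $\eps_1\eps_2 = -1$ we need $r_n > -s_n$, i.e. $r_n + s_n > 0$, which follows because $r_n > 0$ and $r_n > s_n$ give $s_n < r_n$ but not directly the sign of $s_n$, so I would instead fall back on the base values $r_1 = 1, s_1 = 0$ and push an induction on $n$ using the recursion \eqref{EquationTheSecondRecursionForRho}.

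The main obstacle I anticipate is controlling the sign combination $\eps_1\eps_2 s_n$, since $s_n$ need not be positive and the sign $\eps_1\eps_2$ can be either value. The safest strategy, which I would adopt to avoid case fragility, is to prove the sharper statement $q_n - 2r_n \ge \rho_{1,n} - \rho_{2,n} = r_n - s_n > 0$ directly, or even simpler, to prove $q_n - 2 r_n > 0$ by induction on $n$ using \eqref{EquationTheSecondRecursionForRho}: the quantity $q_n - 2r_n$ satisfies the same linear recursion as $q_n$ and $r_n$ (being a fixed linear combination of them), so the monotonicity argument from Theorem~\ref{TheoremSomePropertiesOfRhoKN} applies verbatim once the base cases $q_1 - 2r_1 = q_1 - 2 \ge 1 > 0$ and $q_2 - 2r_2 = (m_1 q_1 - \eps_1\eps_2) - 2m_1 = m_1(q_1 - 2) - \eps_1\eps_2 > 0$ are checked. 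This reduces the corollary to the already-proven machinery and sidesteps any delicate sign analysis of $s_n$.
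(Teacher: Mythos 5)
Your first line of attack is essentially the paper's proof, and the obstacle you hit there is illusory: the sign of $s_n$ is available from the very theorem you are already quoting. Indeed $s_n = \rho_{2,n}$, and Theorem~\ref{TheoremSomePropertiesOfRhoKN} gives $\rho_{2,1} = 0$ together with $\rho_{2,n-1} < \rho_{2,n}$, so $s_n \ge 0$ for all $n \ge 1$. With this in hand the paper concludes in two lines: if $\eps_1\eps_2 = -1$ then $q_n = q_1 r_n + s_n \ge q_1 r_n > 2r_n$, and if $\eps_1\eps_2 = 1$ then $q_n = (q_1-1)r_n + (r_n - s_n) > (q_1-1)r_n \ge 2r_n$, using only $s_n < r_n$, $r_n > 0$ and $q_1 \ge 3$. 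Had you observed $s_n \ge 0$, your first computation would have closed in both sign cases and coincided with the paper's argument.

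Your fallback strategy --- running the recursion \eqref{EquationTheSecondRecursionForRho} on $d_n = q_n - 2r_n$ --- is a genuinely different route and it does work, but not ``verbatim,'' and the base cases you propose to check are not the right ones. The recursion gives $d_{n+1} = m_n d_n - \eps_n\eps_{n+1} d_{n-1}$, and in the case $\eps_n\eps_{n+1} = 1$ positivity of $d_n$ and $d_{n-1}$ alone does not yield $d_{n+1} > 0$: nothing prevents $d_{n-1}$ from exceeding $m_n d_n$ if positivity is all you carry. What propagates is positivity \emph{together with} monotonicity, so the induction hypothesis must be $0 < d_{n-1} \le d_n$, and the base-case check must include $d_1 \le d_2$, not merely $d_1, d_2 > 0$. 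Note moreover that the strict inequality used in the proof of Theorem~\ref{TheoremSomePropertiesOfRhoKN} can genuinely fail here: for $q_1 = 3$, $m_1 = 2$, $\eps_1\eps_2 = 1$ one gets $d_1 = d_2 = 1$, so the induction must be run with non-strict inequalities. The repair is routine: $d_2 - d_1 = (m_1-1)(q_1-2) - \eps_1\eps_2 \ge 0$, and if $0 < d_{n-1} \le d_n$ then $d_{n+1} = (m_n - 1)d_n + (d_n - d_{n-1}) \ge d_n > 0$ when $\eps_n\eps_{n+1} = 1$, while $d_{n+1} = m_n d_n + d_{n-1} > d_n > 0$ when $\eps_n\eps_{n+1} = -1$. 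With these adjustments your recursion argument is complete; it buys uniformity (no case split on $\eps_1\eps_2$ and no appeal to the sign of $s_n$) at the cost of redoing an induction, whereas the paper's direct estimate is immediate from the facts already proved.
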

\begin{proof}
Theorem \ref{TheoremSomePropertiesOfRhoKN} tells us that $\rho_{k,n} < \rho_{k-1,n}$, which for $k=2$ gives $s_n<r_n$. Recall that $q_n = q_1r_n-\eps s_n$ where $\eps = \eps_1\eps_2$, that $q_1\ge 3$ is odd, and that $r_n>0$ for all $n\ge 1$. If $\eps = -1$ then 
$$ q_n  = q_1r_n + s_n  \ge q_1r_n > 2r_n.$$
If $\eps=1$ then 
\begin{align*}
q_n & = q_1r_n -  s_n  \ge (q_1-1) r_n +(r_n-s_n) > (q_1-1)r_n \ge 2r_n.
\end{align*} 
\end{proof}
\begin{remark} \label{RemarkAboutTheParitiesAndRelativeSizesOfRhoKN}
The use of the recursive formula for $\upsilon$ found in Lemma~\ref{LemmaRecursionFormulaForUpsilon} for computing $\upsilon(T(a,b))$ is sensitive to the relative size of $a$ and $b$. In Section \ref{SectionForSignatureComputations}, we shall use a similar recursive relation to compute the $\sigma(T(a,b))$ (Equation \eqref{EquationRecursiveFormulaForSignature}) which, aside from depending on the relative size of $a$ and $b$, is also sensitive to their parities. These two recursive relations shall be applied to calculate $\upsilon$ and $\sigma$ of the following torus knots:  
\begin{align*}
&T(aq_n-2\eps_1r_n, q_n) \quad (\text{with } a=0, 1, 2), \cr
&T(\rho_{k,n}+\rho_{k+1,n}, 2\rho_{k+1,n}), \cr
&T(\rho_{k,n}+\rho_{k+1,n}, \rho_{k,n} - \rho_{k+1,n}).
\end{align*}  
The relative sizes of these entries are controlled by the inequalities from Theorem \ref{TheoremSomePropertiesOfRhoKN} and from Corollary \ref{CorollaryRelativeSizeOfQnAndRn}. The parity of $\rho_{k,n}$ with $k$ fixed, only depends on the parity of $n$ according to \eqref{EquationTheSecondRecursionForRho} (since $m_{n-1}$ is always even). Since $\rho_{k,k-1}=0$ and $\rho_{k,k}=1$, it follows that the parity of $\rho_{k,n}$ agrees with that of $n-k$. In particular, $\rho_{k,n} \pm \rho_{k+1,n}$ is always odd. We shall rely on these properties for the remainder of this section and again in the next section, without further explicit mention.     
\end{remark}

\subsection{Computing upsilon for torus knots} \label{SubSectionComputingUpsilonForAllTorusKnots}
Before delving into detail rich computations, we pause for a bit to outline our strategy of computing $\upsilon(T(p,q))$. Our calculations run through three stages of successive simplification of the knots involved in the computations, starting with $T(p,q)$ and ending up with the unknot. At every stage our chief tool is the recursive formula \eqref{EquationRecursionFormulaForUpsilon} for $\upsilon$. To outline the three stages, let the doubly indexed sequence $\rho_{k,n}$ be associated to a pair of relatively prime integers $p,q>1$ as in the preceding section. 

In stage 1 we use \eqref{EquationRecursionFormulaForUpsilon} to compute the $\upsilon(T(p,q)) = \upsilon(T(p_n,q_n))$ in terms of
\begin{align} \label{EquationCasesForTheFirstReductionStepInTheIntroduction}
\upsilon(T(2q_n-2\eps_1r_n,q_n)) & \quad \text{ if $p_0\ge 2$ is even}, \cr
\upsilon(T(q_n-2\eps_1r_n,q_n)) & \quad \text{ if $p_0\ge 1$ is odd}, \cr
\upsilon(T(2r_n,q_n)) & \quad \text{ if $p_0=0$}. \cr
\end{align}  
This is accomplished in Proposition \ref{PropositionReducingGeneralPNCaseToMultipleOfP0} below. 

In stage 2 we reduce the computation of $\upsilon (T(aq_n-2\eps_1r_n,q_n))$ (with $a=0$, 1 or 2, depending on which case occurred in \eqref{EquationCasesForTheFirstReductionStepInTheIntroduction}) to that of 
\begin{align}
\upsilon(T(r_n+s_n,2s_n)) & \quad \text{ if $p$ is even}, \cr
\upsilon(T(r_n+s_n,r_n-s_n)) & \quad \text{ if $p$ is odd.}
\end{align}
In the computation of $\upsilon$ alone one may choose to reduce $\upsilon (T(aq_n-2\eps_1r_n,q_n))$ to either $\upsilon (T(r_n+s_n,2s_n))$ or $\upsilon (T(r_n+s_n,r_n-s_n))$, and be able to continue to stage 3. The noted parity cases in the formula above however need to be followed later when we get to signature computations. Stage 2 is taken in Proposition \ref{PropositionPropertiesOfUpsilonInTheFirstStep}. 

Lastly, stage 3, which can be found in Proposition \ref{PropositionAuxRecursiveRelationsForUpsilonForCombinationsOfRhos}, explains how to compute $\upsilon (T(\rho_{k,n}+\rho_{k+1,n}, 2\rho_{k+1,n}))$ in terms of $\upsilon (T(\rho_{k+1,n}+\rho_{k+2,n}, 2\rho_{k+2,n}))$ if $p$ is even, or how to compute $\upsilon (T(\rho_{k,n}+\rho_{k+1,n}, \rho_{k,n}-\rho_{k+1,n}))$ in terms of $\upsilon (T(\rho_{k+1,n}+\rho_{k+2,n}, \rho_{k+1,n}-\rho_{k+2,n}))$ if $p$ is odd. Since $r_n = \rho_{1,n}$ and $s_n = \rho_{2,n}$, one can bring the results from stage 2 to bear on stage 3. The recursive formula from Proposition \ref{PropositionAuxRecursiveRelationsForUpsilonForCombinationsOfRhos} terminates after finitely many steps since $\rho_{k,k} =1$ and $\rho_{k,k-1} = 0$ by Theorem \ref{TheoremSomePropertiesOfRhoKN}, and since $\upsilon$ of the unknot equals zero. 

These three steps can be assembled to find a closed formulas for $\upsilon(T(\rho_{k,n}+\rho_{k+1,n}, 2\rho_{k+1,n}))$ and $\upsilon (T(\rho_{k,n}+\rho_{k+1,n}, \rho_{k,n}-\rho_{k+1,n}))$ for any $k, n$ (Proposition \ref{PropositionComputationForGeneralRhosWithAnySign}), which in turn provide the basis for the closed formula for $\upsilon (T(p,q))$ stated in Theorem \ref{TheoremAboutTheValueOfUpsilonForAllTorusKnots}.

\begin{proposition} \label{PropositionReducingGeneralPNCaseToMultipleOfP0}
Let $p,q>1$ be relatively prime integers and assume that $T(p,q)$ becomes unknotted after $n$ pinch moves. Let $(p_n,q_n) = (p,q)$, let $p_0$, $\eps_1$ be defined as in Theorem \ref{TheoremOnReversingPinchingMoves}, and let $r_n$ be as in \eqref{EquationDefinitionOfRnAndSn}. Recall that $p_n = p_0q_n-2\eps_1r_n$ and that $\eps_1=-1$ if $p_0=0$. Then 
\begin{align*}
\upsilon(T(p_n,q_n)) =  \left\{
\begin{array}{ll}
\upsilon(T(2q_n-2\eps_1r_n,q_n)) - \frac{p_0-2}{4}[q_n^2-1] & ; \quad \text{ if } p_0\ge 2, \cr &  \cr 
\upsilon(T(q_n-2\eps_1r_n,q_n)) - \frac{p_0-1}{4}[q_n^2-1] & ;\quad \text{ if } p_0\ge 1, \cr &  \cr 
\upsilon(T(2r_n,q_n)) - \frac{p_0}{4}[q_n^2-1]& ; \quad \text{ if } p_0\ge 0 \text{ and } \eps_1=-1.
\end{array}\right.
\end{align*}
Said differently, one may compute $\upsilon (T(p_0q_n-2\eps_1r_n,q_n))$ for any $p_0\ge 0$  by only considering the 3 cases $p_0=0, 1, 2$. 
\end{proposition}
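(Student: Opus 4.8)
The plan is to use the Recursive Formula for $\upsilon$ from Lemma~\ref{LemmaRecursionFormulaForUpsilon} repeatedly, peeling off copies of the second argument $q_n$ from the first argument $p_n = p_0 q_n - 2\eps_1 r_n$. Since $2r_n < q_n$ by Corollary~\ref{CorollaryRelativeSizeOfQnAndRn}, the quantity $a q_n - 2\eps_1 r_n$ is a positive integer strictly between $(a-1)q_n$ and $(a+1)q_n$ for each relevant $a$, so at each step the torus knot $T(aq_n - 2\eps_1 r_n, q_n)$ is genuinely a torus knot with first parameter smaller than the second being subtracted, and the recursion $T(a,b) \rightsquigarrow T(a, b-a)$ is the wrong orientation. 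Thus I would instead apply the recursion in the form that subtracts the \emph{smaller} argument $q_n$ from the larger argument $a q_n - 2\eps_1 r_n$: writing $a' = a q_n - 2\eps_1 r_n$, we have $\upsilon(T(q_n, a')) = \upsilon(T(q_n, a' - q_n)) + \upsilon(T(q_n, q_n+1))$, where $a' - q_n = (a-1)q_n - 2\eps_1 r_n$. Each such step decrements the multiple of $q_n$ by one and contributes the constant $\upsilon(T(q_n,q_n+1))$, which by the computation inside the proof of Lemma~\ref{LemmaRecursionFormulaForUpsilon} equals $-\tfrac14 q_n^2$ if $q_n$ is even and $-\tfrac14(q_n^2-1)$ if $q_n$ is odd.

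First I would fix notation and record the key facts: $\gcd(a q_n - 2\eps_1 r_n, q_n) = \gcd(2\eps_1 r_n, q_n) = 1$ (so all intermediate torus knots are well-defined), that $q_n$ is odd throughout (since it has the parity of $n-n=0$ plus the base parity, or directly since $q_1$ is odd and the recursion preserves parity), and hence the per-step constant is uniformly $\upsilon(T(q_n,q_n+1)) = -\tfrac14(q_n^2-1)$. Then I would treat the three cases in turn. For $p_0 \ge 2$, I would reduce from $T(p_0 q_n - 2\eps_1 r_n, q_n)$ down to $T(2q_n - 2\eps_1 r_n, q_n)$ by applying the recursion $p_0 - 2$ times, accumulating the constant $(p_0-2)\cdot(-\tfrac14(q_n^2-1))$, which matches the stated $-\tfrac{p_0-2}{4}[q_n^2 - 1]$. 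For $p_0 \ge 1$ I would reduce to $T(q_n - 2\eps_1 r_n, q_n)$ in $p_0 - 1$ steps, and for the $p_0 = 0$ (hence $\eps_1 = -1$) case I would reduce $T(2r_n, q_n)$ — noting that when $p_0 = 0$ the first parameter is exactly $2r_n = -2\eps_1 r_n$ — down in $p_0$ steps, i.e.\ zero net reduction for the base but the formula is written to cover any $p_0 \ge 0$ ending at $T(2r_n, q_n)$.

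The main point requiring care, and the place I expect the real obstacle, is verifying that the recursion is applied in a setting where $T(a', q_n)$ genuinely has $a' > q_n$ at every intermediate step so that the Feller--Krcatovich/Lemma~\ref{LemmaRecursionFormulaForUpsilon} recursion (which subtracts the smaller index from the larger and adds the torus-knot term in the \emph{smaller} index) applies with the constant $\upsilon(T(q_n, q_n+1))$ rather than $\upsilon(T(a', a'+1))$. Concretely, after stripping $j$ copies of $q_n$ one reaches $T((p_0 - j)q_n - 2\eps_1 r_n, q_n)$, and I must confirm $(p_0 - j)q_n - 2\eps_1 r_n > q_n$ exactly while $p_0 - j \ge 2$ (using $2r_n < q_n$), so that $q_n$ remains the smaller of the two parameters and the subtracted/added term is consistently $T(q_n, q_n+1)$. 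This inequality, $(p_0-j)q_n - 2\eps_1 r_n > q_n \iff (p_0 - j - 1)q_n > 2\eps_1 r_n$, holds for $\eps_1 = 1$ precisely when $p_0 - j \ge 2$ (since then the left side is at least $q_n > 2r_n$) and holds automatically for $\eps_1 = -1$; this case split is exactly why the proposition singles out the termination values $a \in \{0,1,2\}$ and tags the last case with the hypothesis $\eps_1 = -1$. Once the inequality is checked, the remaining work is a routine accounting of $p_0 - 2$, $p_0 - 1$, or $p_0$ identical constant contributions, which I would present as a short induction on the number of recursion steps.
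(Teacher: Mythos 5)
Your proposal is correct and is essentially the paper's own proof: both arguments consist of repeatedly applying the recursion of Lemma~\ref{LemmaRecursionFormulaForUpsilon} with $q_n$ playing the role of the smaller (odd) parameter, so that each step peels one copy of $q_n$ off the larger parameter at constant cost $-\tfrac{1}{4}(q_n^2-1)$, and terminates after $p_0-2$, $p_0-1$, or $p_0$ steps according to the three cases. Your verification of the side conditions (coprimality of the intermediate parameters, oddness of $q_n$, and the inequality $(p_0-j-1)q_n > 2\eps_1 r_n$ governing when $q_n$ stays the smaller parameter, which is exactly what forces the case split at $\eps_1=\pm1$) is in fact more explicit than what the paper records.
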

\begin{proof}
Using \eqref{EquationRecursionFormulaForUpsilon} repeatedly leads to the desired result:
\begin{align*}
\upsilon (T(p_n, q_n )) & = \upsilon (T(p_0q_n-2\eps_1r_n, q_n )) \cr
& = \textstyle \upsilon (T((p_0-1)q_n-2\eps_1r_n, q_n )) - \frac{1}{4}[q_n^2-1] \cr
& = \textstyle \upsilon (T((p_0-2)q_n-2\eps_1r_n, q_n )) - \frac{2}{4}[q_n^2-1] \cr
& \qquad \vdots \cr
& = \textstyle \upsilon (T(2q_n-2\eps_1r_n, q_n )) - \frac{p_0-2}{4}[q_n^2-1] \cr
& = \textstyle \upsilon (T(q_n-2\eps_1r_n, q_n )) - \frac{p_0-1}{4}[q_n^2-1].
\end{align*}
If $\eps_1=-1$, then we additionally obtain 
\begin{align*}
\upsilon (T(p_n, q_n )) = \textstyle \upsilon (T(q_n+2r_n, q_n )) - \frac{p_0-1}{4}[q_n^2-1] = \upsilon (T(2r_n, q_n )) - \frac{p_0}{4}[q_n^2-1].
\end{align*}
\end{proof}
\begin{proposition} \label{PropositionPropertiesOfUpsilonInTheFirstStep}
Let $p,q>1$ be relatively prime integers and assume that $T(p,q)$ becomes unknotted after $n$ pinch moves. Let $(p_n,q_n) = (p,q)$, let $p_0$, $\eps_1$, $\eps_2$ be defined as in Theorem \ref{TheoremOnReversingPinchingMoves}, and let $r_n$, $s_n$ be as in \eqref{EquationDefinitionOfRnAndSn}. If $p_0\ge 2$ then  	 
\begin{align*}
\upsilon & (T(2q_n-2\eps_1r_n, q_n)) =  \cr
& = \left\{\begin{array}{l}
\textstyle \upsilon(T(r_n + s_n , r_n - s_n )) -\frac{1}{4}r_n^2(2q_1^2-2\eps_1 q_1-1) +\frac{1}{2}\eps_1 \eps_2 r_n s_n (2q_1-\eps_1) +\frac{3}{4}(1-s_n^2)\cr
\upsilon (T(r_n-s_n,2s_n)) - \textstyle \frac{1}{2}r_n^2q_1(q_1-\eps_1) +\frac{1}{2}\eps_1\eps_2 r_n s_n (2q_1-\eps_1+\eps_1\eps_2) +(1-s_n^2)  \cr 
\upsilon (T(r_n+s_n,2s_n))- \textstyle \frac{1}{2}r_n^2q_1(q_1-\eps_1) +\frac{1}{2}\eps_1\eps_2 r_n s_n (2q_1-\eps_1+\eps_1\eps_2) +1,
\end{array}
\right. 
\end{align*}
if $p_0\ge 1$ then  
\begin{align*}
\upsilon & (T(q_n-2\eps_1r_n, q_n)) = \cr
&= \left\{ \begin{array}{l}
\textstyle \upsilon(T(r_n + s_n , r_n-s_n))-\frac{1}{4}r_n^2(q_1^2-\eps_12q_1-1) + \frac{1}{2}\eps_1\eps_2 r_ns_n(q_1-\eps_1) +\frac{1}{2} (1-s_n^2), \cr
\textstyle \upsilon (T(r_n-s_n,2s_n)) \textstyle  -\frac{1}{4}r_n^2(q_1^2-2\eps_1 q_1) + \frac{1}{2}\eps_1\eps_2 r_ns_n(q_1-\eps_1+\eps_1\eps_2) +\frac{3}{4} (1-s_n^2), \cr
\textstyle \upsilon (T(r_n+s_n,2s_n)) \textstyle  -\frac{1}{4}r_n^2(q_1^2-2\eps_1 q_1) + \frac{1}{2}\eps_1\eps_2 r_ns_n(q_1-\eps_1+\eps_1\eps_2) +\frac{3}{4} + \frac{1}{4}s_n^2,
\end{array}\right. 
\end{align*}
and if $p_0\ge 0$ and $\eps_1=-1$ then 
\begin{align*}
\upsilon  (T(q_n , 2r_n)) & = \left\{ \begin{array}{l}
\textstyle \upsilon(T(r_n+s_n, r_n-s_n ))-\frac{1}{4}r_n^2(2q_1-1) - \frac{1}{2}\eps_2r_ns_n+\frac{1}{4}(1-s_n^2),\cr
\textstyle  \upsilon(T(r_n-s_n,2s_n)) -\frac{1}{2}q_1 r_n^2  -\frac{1}{2}r_ns_n(\eps_2-1) +\frac{1}{2}(1-s_n^2), \cr
\textstyle  \upsilon(T(r_n+s_n,2s_n)) -\frac{1}{2}q_1 r_n^2  -\frac{1}{2}r_ns_n(\eps_2-1) +\frac{1}{2}(1+s_n^2).
\end{array}\right.
\end{align*}
\end{proposition}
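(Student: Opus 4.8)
The plan is to derive all three displayed formulas by repeated application of the Recursive Formula for $\upsilon$ from Lemma~\ref{LemmaRecursionFormulaForUpsilon}, organized as a subtractive Euclidean descent on the torus-knot parameters. Throughout I would use the identity $q_n = q_1 r_n - \eps_1\eps_2 s_n$ coming from \eqref{EquationDefinitionOfRnAndSn}, the inequalities $0 < s_n < r_n$ and $2r_n < q_n$ furnished by Theorem~\ref{TheoremSomePropertiesOfRhoKN} and Corollary~\ref{CorollaryRelativeSizeOfQnAndRn}, and the parity facts from Remark~\ref{RemarkAboutTheParitiesAndRelativeSizesOfRhoKN} (that $q_n$ is odd and that $r_n \pm s_n$ is always odd). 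These are exactly what is needed to decide, at each stage of the descent, which coordinate is smaller and whether it is even or odd, so that the correct branch of \eqref{EquationRecursionFormulaForUpsilon} applies.

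The core computation is a single descent, which I would carry out in detail for the knot $T(q_n, 2r_n)$ of the third block. After using $T(a,b) = T(b,a)$ to place the smaller coordinate $2r_n$ first, the recursion subtracts the fixed even coordinate $2r_n$ from the second while it remains the smaller of the two, each step contributing $-r_n^2$. Writing $q_n = q_1 r_n + \eps_2 s_n$ (here $\eps_1=-1$), one checks that each intermediate second coordinate has the form $(q_1-2j)r_n + \eps_2 s_n$ with $q_1-2j$ odd; since $0<s_n<r_n$ this never equals the even number $2r_n$, so the descent crosses over with no boundary ambiguity after exactly $(q_1-1)/2$ subtractions, arriving at $T(2r_n, r_n+\eps_2 s_n)$. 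A single further reduction of the remaining odd coordinate then lands on the base knot $T(r_n+s_n, r_n-s_n)$, and summing the collected constants reproduces the first displayed line. The knot $T(q_n-2\eps_1 r_n, q_n)$ of the second block is entirely analogous: for $\eps_1=1$ one initial odd-coordinate step reaches $T(q_n-2r_n, 2r_n)$, followed by $(q_1-3)/2$ subtractions of $2r_n$ and a final odd step; for $\eps_1=-1$ one step reduces it to $T(q_n,2r_n)$, returning to the descent just described.

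To obtain the remaining two lines of each block without a second descent, I would record the three elementary relations among the base knots, all immediate from one application of the recursion formula together with $r_n>s_n$:
\begin{align*}
\upsilon(T(r_n+s_n, r_n-s_n)) &= \upsilon(T(r_n-s_n, 2s_n)) - \tfrac14\big((r_n-s_n)^2 - 1\big), \\
\upsilon(T(r_n+s_n, 2s_n)) &= \upsilon(T(r_n-s_n, 2s_n)) - s_n^2.
\end{align*}
Substituting these into the first line and simplifying yields the second and third lines, so only one genuine descent per input knot is required. Moreover the three input blocks are themselves linked by single recursion steps mirroring Proposition~\ref{PropositionReducingGeneralPNCaseToMultipleOfP0}: $T(2q_n-2\eps_1 r_n, q_n)$ reduces in one step to $T(q_n-2\eps_1 r_n, q_n)$, so the first block follows from the second by adding $-\tfrac14(q_n^2-1)$. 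Thus the whole proposition reduces to the descents above together with routine substitutions.

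The main obstacle will be the bookkeeping rather than any conceptual difficulty. One must pin down the exact number of subtraction steps as a function of $q_1$, confirm that the same count and closed form remain valid in the degenerate small cases (notably $q_1=3$, where the bulk phase of $2r_n$-subtractions is empty), correctly track the fixed parity of the subtracted coordinate through each phase, and carry the four sign combinations of $(\eps_1,\eps_2)$ to the end. The final algebraic consolidation of the accumulated constants of type $-\tfrac14\big((q_n-2\eps_1 r_n)^2-1\big)$, $-\tfrac{q_1-3}{2}r_n^2$, and $-\tfrac14\big((r_n-\eps_2 s_n)^2-1\big)$ into the stated polynomials in $r_n, s_n, q_1, \eps_1, \eps_2$ is where the bulk of the verification lies; I expect the most efficient check of this step to be confirming the internal consistency of the three lines via the base-knot relations displayed above.
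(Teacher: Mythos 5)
Your proposal is correct and follows essentially the same route as the paper's proof: repeated application of the recursive formula \eqref{EquationRecursionFormulaForUpsilon}, organized as a Euclidean descent in steps of $2r_n$ with the same cross-over to $T(r_n+\eps_2 s_n, r_n-\eps_2 s_n)$, the same auxiliary relations \eqref{EquationAuxiliaryFormulasForRnAndSn} to pass between the three lines of each block, and the same one-step recursions linking the three blocks. The only difference is the direction of bookkeeping (you build up from $T(q_n,2r_n)$, while the paper descends from $T(2q_n-2\eps_1 r_n,q_n)$ and recovers the other blocks by adding back $\tfrac14(q_n^2-1)$ or $\tfrac12(q_n^2-1)$), which is immaterial.
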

\begin{proof}
These formulas follow by direct computation relying on the recursive relation \eqref{EquationRecursionFormulaForUpsilon}. We start with the computation of $\upsilon (T(2q_n-2\eps_1r_n, q_n))$, breaking into cases, and the other two computations will follow quickly. 

Let us suppose that $\eps_1=1$. Recall that $q_n=q_1r_n-\eps_2s_n$ is odd.  
\begin{align*}
\upsilon (T(2q_n-&2r_n  ,q_n))  = \textstyle \upsilon(T(q_n-2r_n, q_n)) - \frac{1}{4} [q_n^2-1] \cr
& =  \textstyle \upsilon(T(q_n-2r_n, 2r_n)) - \frac{1}{4} [ (q_n-2r_n)^2-1] - \frac{1}{4} [q_n^2-1] \cr
& =  \textstyle \upsilon(T(r_n(q_1-2)) -\eps_2s_n , 2r_n) - \frac{1}{4} [ (q_n-2r_n)^2-1] - \frac{1}{4} [q_n^2-1] \cr
& =  \textstyle \upsilon(T(r_n(q_1-4)) -\eps_2s_n , 2r_n)  - r_n^2 - \frac{1}{4} [ (q_n-2r_n)^2-1] - \frac{1}{4} [q_n^2-1] \cr
& \qquad \qquad \vdots \cr
& = \textstyle \upsilon(T(r_n - \eps_2 s_n , 2r_n)) -  \frac{q_1-3}{2} r_n^2 - \frac{1}{4} [ (q_n-2r_n)^2-1] - \frac{1}{4} [q_n^2-1] \cr
& = \textstyle \upsilon(T(r_n - \eps_2 s_n , r_n + \eps_2 s_n )) - \frac{1}{4}[(r_n-\eps_2 s_n)^2-1] -  \frac{q_1-3}{2} r_n^2  \cr
& \qquad \qquad \qquad \textstyle - \frac{1}{4} [ (q_n-2r_n)^2-1]  -  \frac{1}{4} [q_n^2-1] \cr
& =  \textstyle \upsilon(T(r_n + s_n , r_n - s_n )) -\frac{1}{4}r_n^2(2q_1^2-2q_1-1) +\frac{1}{2} \eps_2 r_n s_n (2q_1-1) +\frac{3}{4}(1-s_n^2).
\end{align*}
From this formula and from 
\begin{align} \label{EquationAuxiliaryFormulasForRnAndSn}
\upsilon (T(r_n+s_n,r_n-s_n)) & = \upsilon (T(r_n-s_n,2s_n)) - \textstyle \frac{1}{4}[(r_n-s_n)^2-1], \text{ and }   \cr
\upsilon (T(r_n+s_n,2s_n)) & = \upsilon (T(r_n-s_n,2s_n)) - s_n^2,
\end{align}
we additionally obtain 
\begin{align*}
\upsilon (T(2q_n-2r_n  ,q_n)) & = \upsilon (T(r_n-s_n,2s_n)) - \textstyle \frac{1}{2}r_n^2q_1(q_1-1) +\frac{1}{2}\eps_2 r_n s_n (2q_1-1+\eps_2) +(1-s_n^2) \cr 
& = \upsilon (T(r_n+s_n,2s_n))- \textstyle \frac{1}{2}r_n^2q_1(q_1-1) +\frac{1}{2}\eps_2 r_n s_n (2q_1-1+\eps_2) +1.
\end{align*}
When $\eps_1=-1$, the computation of $\upsilon (T(2q_n+2r_n,q_n))$ proceeds similarly. 
\begin{align*}
\upsilon (T(2q_n+2r_n & ,q_n))  = \textstyle \upsilon(T(q_n+2r_n, q_n)) - \frac{1}{4} [q_n^2-1] \cr
& =  \textstyle \upsilon(T(q_n, 2r_n)) - \frac{1}{2} [ q_n^2-1] \cr
& =  \textstyle \upsilon(T(r_n q_1 +\eps_2s_n , 2r_n)) - \frac{1}{2} [ q_n^2-1] \cr
& =  \textstyle \upsilon(T(r_n (q_1-2) +\eps_2s_n , 2r_n)) - r_n^2- \frac{1}{2} [ q_n^2-1] \cr
& \qquad \qquad \vdots \cr
& = \textstyle \upsilon(T(r_n + \eps_2 s_n , 2r_n)) -  \frac{q_1-1}{2} r_n^2- \frac{1}{2} [ q_n^2-1] \cr
& = \textstyle \upsilon(T(r_n + \eps_2 s_n , r_n - \eps_2 s_n )) - \frac{1}{4}[(r_n+\eps_2 s_n)^2-1] -  \frac{q_1-1}{2} r_n^2- \frac{1}{2} [ q_n^2-1] \cr
& =  \textstyle \upsilon(T(r_n + s_n , r_n - s_n )) -\frac{1}{4}r_n^2(2q_1^2+2q_1-1) -\frac{1}{2} \eps_2 r_n s_n (2q_1+1) +\frac{3}{4}(1-s_n^2).
\end{align*}
Using again \eqref{EquationAuxiliaryFormulasForRnAndSn} we also obtain 
\begin{align*}
\upsilon (T(2q_n+2r_n  ,q_n)) & =\upsilon (T(r_n-s_n,2s_n)) \textstyle  -\frac{1}{4}r_n^2(2q_1^2+2q_1) -\frac{1}{2} \eps_2 r_n s_n (2q_1+1-\eps_2) +(1-s_n^2), \cr
& = \upsilon (T(r_n+s_n,2s_n)) \textstyle  -\frac{1}{4}r_n^2(2q_1^2+2q_1) -\frac{1}{2} \eps_2 r_n s_n (2q_1+1-\eps_2) +1.
\end{align*}
It is now easy to see that the computations for $\upsilon (T(2q_n-2\eps_1r_n,q_n))$ for the two cases of $\eps_1=\pm 1$ can be combined into the single formula stated in the proposition. 

Next, turn to the calculation of $\upsilon (T(q_n-2\eps_1r_n,q_n))$. We can use our previous computation of $\upsilon(T(2q_n-2\eps_1r_n, q_n))$ to easily compute $\upsilon(T(q_n-2\eps_1r_n, q_n))$. The first lines of our previous computation for both the case $\eps_1 = 1$ and $\eps_1 = -1$ stated that: 
$$ \upsilon (T(2q_n-\eps_12r_n,q_n))  = \textstyle \upsilon(T(q_n-2\eps_1r_n, q_n)) - \frac{1}{4} [q_n^2-1] $$

Hence the value of $\upsilon(T(q_n-2\eps_1r_n, q_n))$ is simply given by adding $\frac{1}{4}(q_n^2 - 1)$ to  the expressions we found to represent $\upsilon(T(2q_n-2\eps_1r_n, q_n))$ and by using the fact that $q_n=q_1r_n-\eps_2s_n$ to simplify.

In addition, we can use our previous computation of $\upsilon(T(2q_n-2\eps_1r_n, q_n))$ to easily compute $\upsilon(T(q_n , 2r_n))$, as well. Observe that the second line of our previous computation in the case that $\eps_1 = -1$ stated that: 
$$\upsilon (T(2q_n+2r_n,q_n))  =  \textstyle \upsilon(T(q_n, 2r_n)) - \frac{1}{2} [ q_n^2-1]$$

Hence the value of $\upsilon(T(q_n , 2r_n))$ is given by adding $\frac{1}{2} [ q_n^2-1]$ to the expressions we found to represent $\upsilon(T(2q_n-2\eps_1r_n, q_n))$ and by setting $\eps_1 = -1$ and simplifying. We leave these details of substitution and simplification to the reader. This completes the proof of the proposition. 
\end{proof}
\begin{proposition}   \label{PropositionAuxRecursiveRelationsForUpsilonForCombinationsOfRhos}
Let $p,q>1$ be relatively prime integers and assume that $T(p,q)$ becomes unknotted after $n$ pinch moves. Let $\rho_{k,n}$ be defined in terms of $p,q$ as in  \eqref{DefinitionOfRhoKCommaN}. Then
\begin{align*}
\upsilon(T(\rho_{k,n}+\rho_{k+1,n},& \rho_{k,n}-\rho_{k+1,n})) =  \textstyle \upsilon(T(\rho_{k+1,n}+ \rho_{k+2,n} , \rho_{k+1,n}-\rho_{k+2,n} ))+ \cr
& \qquad \qquad \qquad \qquad\qquad \textstyle +\frac{1}{2} +\frac{1}{4}[(\rho_{k+1,n}^2-\rho_{k,n}^2)-(\rho_{k+2,n}^2-\rho_{k+1,n}^2)].
\end{align*}
From this we additionally obtain
\begin{align*}
\upsilon(&T(\rho_{k,n}-\rho_{k+1,n},2\rho_{k+1,n}))  =  \cr
& = \left\{
\begin{array}{l}
\upsilon(T(\rho_{k+1,n}-\rho_{k+2,n},2\rho_{k+2,n})) -\textstyle \frac{1}{2}[ \rho_{k,n}\rho_{k+1,n} - \rho_{k+1,n}^2-\rho_{k+1,n}\rho_{k+2,n} + \rho_{k+2,n}^2-1] \cr
\upsilon(T(\rho_{k+1,n}+\rho_{k+2,n},2\rho_{k+2,n})) -\textstyle \frac{1}{2}[ \rho_{k,n}\rho_{k+1,n} - \rho_{k+1,n}^2-\rho_{k+1,n}\rho_{k+2,n} - \rho_{k+2,n}^2-1], \cr
\end{array}
\right.
\end{align*}
\begin{align*}
\upsilon(&T(\rho_{k,n}+\rho_{k+1,n},2\rho_{k+1,n}))  =  \cr
& = \left\{
\begin{array}{l}
\upsilon(T(\rho_{k+1,n}-\rho_{k+2,n},2\rho_{k+2,n})) -\textstyle \frac{1}{2}[ \rho_{k,n}\rho_{k+1,n} + \rho_{k+1,n}^2-\rho_{k+1,n}\rho_{k+2,n} + \rho_{k+2,n}^2-1] \cr
\upsilon(T(\rho_{k+1,n}+\rho_{k+2,n},2\rho_{k+2,n})) -\textstyle \frac{1}{2}[ \rho_{k,n}\rho_{k+1,n} + \rho_{k+1,n}^2-\rho_{k+1,n}\rho_{k+2,n} - \rho_{k+2,n}^2-1]. \cr
\end{array}
\right.
\end{align*}
\end{proposition}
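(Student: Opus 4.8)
The plan is to reduce all three quantities to a single ``master'' quantity at level $k$ and then propagate one level down using the recursion \eqref{EquationRecursionFormulaForUpsilon}. Throughout I abbreviate $a=\rho_{k,n}$, $b=\rho_{k+1,n}$, $c=\rho_{k+2,n}$. By Theorem~\ref{TheoremSomePropertiesOfRhoKN} these satisfy $a>b>c\ge 0$; by Remark~\ref{RemarkAboutTheParitiesAndRelativeSizesOfRhoKN} each of $a\pm b$ and $b\pm c$ is odd; and the defining recursion \eqref{DefinitionOfRhoKCommaN} (taken at index $k+2$) gives the single algebraic identity $a=m_kb-\eps c$, where $\eps:=\eps_{k+1}\eps_{k+2}\in\{\pm1\}$ and $m_k\ge 2$ is even. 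Since consecutive $\rho$'s are relatively prime, all pairs appearing below are genuine torus-knot parameters, so \eqref{EquationRecursionFormulaForUpsilon} applies at every stage.

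First I would record two \emph{bridge identities}, each obtained from a single application of \eqref{EquationRecursionFormulaForUpsilon}. Since $a-b$ is odd and $(a+b)-(a-b)=2b$, reducing the larger argument gives $\upsilon(T(a+b,a-b))=\upsilon(T(a-b,2b))-\tfrac14((a-b)^2-1)$; and since $a+b>2b$ with $2b$ even, reducing $a+b$ by $2b$ gives $\upsilon(T(a+b,2b))=\upsilon(T(a-b,2b))-b^2$. These express both ``$+$'' quantities in terms of the master quantity $\upsilon(T(a-b,2b))$, and the identical identities hold one level down under $(a,b)\mapsto(b,c)$.

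The heart of the argument is the reduction of $\upsilon(T(a-b,2b))$ to the level-$(k+1)$ difference form $\upsilon(T(b+c,b-c))$. Writing $a-b=(m_k-1)b-\eps c$ with $m_k-1$ odd, I would peel off the even argument $2b$ repeatedly: at each stage the odd argument still exceeds $2b$ (by $a>b>c$), so \eqref{EquationRecursionFormulaForUpsilon} subtracts $2b$ and contributes $-b^2$. After $\tfrac{m_k}{2}-1$ such steps one reaches $\upsilon(T(b-\eps c,2b))$, and one final recursion (reducing the even $2b$ by the odd $b-\eps c$) produces $\upsilon(T(b-\eps c,b+\eps c))=\upsilon(T(b+c,b-c))$ at cost $-\tfrac14((b-\eps c)^2-1)$, the last equality holding for either sign of $\eps$ because $T(x,y)=T(y,x)$. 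Collecting terms yields
$$\upsilon(T(a-b,2b))=\upsilon(T(b+c,b-c))-\Big(\tfrac{m_k}{2}-1\Big)b^2-\tfrac14\big((b-\eps c)^2-1\big).$$
Substituting this into the first bridge identity and simplifying the accumulated constant \emph{using} $a=m_kb-\eps c$ collapses it to $\tfrac12+\tfrac14[(b^2-a^2)-(c^2-b^2)]$, which is the first displayed formula; this single substitution is precisely where the $\eps$-dependence cancels.

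Finally, the two pairs of ``double'' formulas follow formally. Combining the boxed expression with the level-$(k+1)$ bridge identities $\upsilon(T(b+c,b-c))=\upsilon(T(b-c,2c))-\tfrac14((b-c)^2-1)$ and $\upsilon(T(b+c,2c))=\upsilon(T(b-c,2c))-c^2$ rewrites $\upsilon(T(a-b,2b))$ in terms of either $T(b-c,2c)$ or $T(b+c,2c)$, and applying the second bridge $\upsilon(T(a+b,2b))=\upsilon(T(a-b,2b))-b^2$ gives the $T(a+b,2b)$ statements. The main obstacle is purely bookkeeping in the peeling step: one must confirm at each stage which argument is larger so that the correct (even- versus odd-leading) case of \eqref{EquationRecursionFormulaForUpsilon} is invoked, track the sign $\eps$ through $T(b-\eps c,b+\eps c)=T(b+c,b-c)$, and verify that the quadratic constants collapse exactly as claimed once $a=m_kb-\eps c$ is inserted. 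Conceptually there is no difficulty — the recursion among the $\rho$'s forces every constant to simplify.
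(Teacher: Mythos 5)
Your proposal is correct and follows essentially the same route as the paper's proof: repeatedly peeling off $2\rho_{k+1,n}$ via the recursion \eqref{EquationRecursionFormulaForUpsilon} until the odd parameter drops to $\rho_{k+1,n}-\eps_{k+1}\eps_{k+2}\rho_{k+2,n}$, applying the recursion once more to land on $T(\rho_{k+1,n}+\rho_{k+2,n},\rho_{k+1,n}-\rho_{k+2,n})$, and then deducing the remaining four formulas from the two bridge identities (the paper's \eqref{EquationObivousRecursiveRelationsAmongTheRhos}). The only cosmetic difference is that you unify the two sign cases $\eps_{k+1}\eps_{k+2}=\pm1$ into a single computation with $b-\eps c$, whereas the paper runs the two parallel calculations separately; the accumulated constants collapse identically once $a=m_kb-\eps c$ is substituted.
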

\begin{proof}
Recall the relation \eqref{DefinitionOfRhoKCommaN} by which  $\rho_{k,n}  = m_k\rho_{k+1,n} -\eps_{k+1}\eps_{k+2} \rho_{k+2,n}$ for all $k\in \mathbb N$. Assume firstly that $\eps_{k+1}\eps_{k+2}=1$, then the recursive relation \eqref{EquationRecursionFormulaForUpsilon} leads to   
\begin{align*}
\upsilon  (&T(\rho_{k,n}   +\rho_{k+1,n}  ,\rho_{k,n} - \rho_{k+1,n}))  = \upsilon  (T(\rho_{k,n}   -\rho_{k+1,n}  ,2 \rho_{k+1,n})) -\textstyle \frac{1}{4} [(\rho_{k,n} - \rho_{k+1,n})^2-1]   \cr
& = \upsilon (T(\rho_{k+1,n}(m_k-1) - \rho_{k+2,n}, 2\rho_{k+1,n})) -\textstyle \frac{1}{4} [(\rho_{k,n} - \rho_{k+1,n})^2-1] \cr
& = \textstyle \upsilon(T(\rho_{k+1,n} (m_k-3) - \rho_{k+2,n} , 2\rho_{k+1,n})) - \rho_{k+1,n}^2 -\textstyle \frac{1}{4} [(\rho_{k,n} - \rho_{k+1,n})^2-1]\cr
& \qquad \qquad \vdots \cr
& = \textstyle \upsilon(T(\rho_{k+1,n}- \rho_{k+2,n} , 2\rho_{k+1,n})) - \frac{m_k-2}{2} \rho_{k+1,n}^2 -\textstyle \frac{1}{4} [(\rho_{k,n} - \rho_{k+1,n})^2-1] \cr
& = \textstyle \upsilon(T(\rho_{k+1,n}- \rho_{k+2,n} , \rho_{k+1,n}+\rho_{k+2,n} )) - \frac{1}{4}[(\rho_{k+1,n}-\rho_{k+2,n})^2-1]  - \frac{m_k-2}{2} \rho_{k+1,n}^2 \cr
& \qquad \qquad \qquad -\textstyle \frac{1}{4} [(\rho_{k,n} - \rho_{k+1,n})^2-1] \cr
%
%
& =  \textstyle \upsilon(T(\rho_{k+1,n}+ \rho_{k+2,n} , \rho_{k+1,n}-\rho_{k+2,n} )) +\frac{1}{2} +\frac{1}{4}[(\rho_{k+1,n}^2-\rho_{k,n}^2)-(\rho_{k+2,n}^2-\rho_{k+1,n}^2)].
\end{align*}
The case of $\eps_{k+1}\eps_{k+2}=-1$ proceeds similarly:
\begin{align*}
\upsilon  (&T(\rho_{k,n}   +\rho_{k+1,n}  ,\rho_{k,n} - \rho_{k+1,n}))  = \upsilon  (T(\rho_{k,n}   -\rho_{k+1,n}  ,2 \rho_{k+1,n})) -\textstyle \frac{1}{4} [(\rho_{k,n} - \rho_{k+1,n})^2-1]   \cr
& = \upsilon (T(\rho_{k+1,n}(m_k-1) + \rho_{k+2,n}, 2\rho_{k+1,n})) -\textstyle \frac{1}{4} [(\rho_{k,n} - \rho_{k+1,n})^2-1] \cr
& = \textstyle \upsilon(T(\rho_{k+1,n} (m_k-3) + \rho_{k+2,n} , 2\rho_{k+1,n})) - \rho_{k+1,n}^2 -\textstyle \frac{1}{4} [(\rho_{k,n} - \rho_{k+1,n})^2-1]\cr
& \qquad \qquad \vdots \cr
& = \textstyle \upsilon(T(\rho_{k+1,n}+ \rho_{k+2,n} , 2\rho_{k+1,n})) - \frac{m_k-2}{2} \rho_{k+1,n}^2 -\textstyle \frac{1}{4} [(\rho_{k,n} - \rho_{k+1,n})^2-1] \cr
& = \textstyle \upsilon(T(\rho_{k+1,n}- \rho_{k+2,n} , \rho_{k+1,n}+\rho_{k+2,n} )) - \frac{1}{4}[(\rho_{k+1,n}+\rho_{k+2,n})^2-1]  - \frac{m_k-2}{2} \rho_{k+1,n}^2 \cr
& \qquad \qquad \qquad -\textstyle \frac{1}{4} [(\rho_{k,n} - \rho_{k+1,n})^2-1] \cr
& =  \textstyle \upsilon(T(\rho_{k+1,n}+ \rho_{k+2,n} , \rho_{k+1,n}-\rho_{k+2,n} )) +\frac{1}{2} +\frac{1}{4}[(\rho_{k+1,n}^2-\rho_{k,n}^2)-(\rho_{k+2,n}^2-\rho_{k+1,n}^2)].
%
%
\end{align*}
This proves the first formula of the proposition. The remaining formulas follow from this one and the obvious relations
\begin{align} \label{EquationObivousRecursiveRelationsAmongTheRhos}
\upsilon(T(\rho_{k,n}+\rho_{k+1,n}, \rho_{k,n}-\rho_{k+1,n})) & = \textstyle \upsilon(T(\rho_{k,n}-\rho_{k+1,n}, 2\rho_{k+1,n})) - \frac{1}{4}[ (\rho_{k,n}-\rho_{k+1,n})^2-1] , \cr  
\upsilon(T(\rho_{k,n}+\rho_{k+1,n}, 2\rho_{k+1,n})) & = \upsilon(T(\rho_{k,n}-\rho_{k+1,n}, 2\rho_{k+1,n})) - \rho_{k+1,n}^2. \cr  
\end{align}
\end{proof}
\begin{proposition} \label{PropositionComputationForGeneralRhosWithAnySign}
Let $p,q>1$ be relatively prime integers and assume that $T(p,q)$ becomes unknotted after $n$ pinch moves. Let $\rho_{k,n}$ be defined in terms of $p,q$ as in  \eqref{DefinitionOfRhoKCommaN}. Then
	
\begin{align*}
\upsilon(T(\rho_{k,n}+\rho_{k+1,n},\rho_{k,n}-\rho_{k+1,n})) & =  \textstyle \frac{1}{2}(n-k) +\frac{1}{4}(\rho_{k+1,n}^2-\rho_{k,n}^2+1), \cr
\upsilon(T(\rho_{k,n}-\rho_{k+1,n},2\rho_{k+1,n})) & =  \textstyle \frac{1}{2}(n-k +\rho_{k+1,n}^2-\rho_{k,n}\rho_{k+1,n} ), \cr
\upsilon(T(\rho_{k,n}+\rho_{k+1,n},2\rho_{k+1,n})) & =  \textstyle \frac{1}{2}(n-k -\rho_{k+1,n}^2-\rho_{k,n}\rho_{k+1,n} ).
\end{align*}
\end{proposition}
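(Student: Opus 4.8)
The plan is to prove all three formulas by a single downward induction on $k$ with $n$ held fixed, establishing the first formula inductively and then reading the other two off algebraically. I would phrase the induction in terms of $n-k$, so that the base case is $k=n$ and each step decreases $k$ by one. Throughout, the genuine analytic input is already in hand: the recursive relations of Proposition \ref{PropositionAuxRecursiveRelationsForUpsilonForCombinationsOfRhos} do all the real work, and what remains is bookkeeping.

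For the base case $k=n$, Theorem \ref{TheoremSomePropertiesOfRhoKN} gives $\rho_{n,n}=1$ and $\rho_{n+1,n}=0$, so the first knot is $T(\rho_{n,n}+\rho_{n+1,n},\rho_{n,n}-\rho_{n+1,n})=T(1,1)$, the unknot, whose upsilon invariant is $0$. The claimed value is $\frac{1}{2}(n-n)+\frac{1}{4}(\rho_{n+1,n}^2-\rho_{n,n}^2+1)=\frac{1}{4}(0-1+1)=0$, so the first formula holds at $k=n$.

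For the inductive step, I assume the first formula at level $k+1$ (which presupposes $n\ge k+1$, exactly the range in which $\rho_{k+2,n}$ is defined, so the recursion is legitimate) and substitute it into the first recursive relation of Proposition \ref{PropositionAuxRecursiveRelationsForUpsilonForCombinationsOfRhos}. The constant $\frac12(n-k-1)+\frac12$ reassembles to $\frac12(n-k)$, while the squared $\rho$-terms collapse: the two occurrences of $\rho_{k+2,n}^2$ cancel and the contributions in $\rho_{k+1,n}^2$ combine to a single copy, leaving precisely $\frac{1}{2}(n-k)+\frac{1}{4}(\rho_{k+1,n}^2-\rho_{k,n}^2+1)$. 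This is the first formula at level $k$, completing the induction.

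The remaining two formulas need no further induction. Solving the two relations in \eqref{EquationObivousRecursiveRelationsAmongTheRhos} for $\upsilon(T(\rho_{k,n}-\rho_{k+1,n},2\rho_{k+1,n}))$ and $\upsilon(T(\rho_{k,n}+\rho_{k+1,n},2\rho_{k+1,n}))$ and inserting the now-established first formula, the expansion of $(\rho_{k,n}-\rho_{k+1,n})^2$ produces the cross term $-2\rho_{k,n}\rho_{k+1,n}$, and the two stated closed forms emerge after collecting and dividing by $4$. I expect no serious obstacle here: the computation is purely algebraic, and the only points demanding genuine care are confirming the base case and tracking the index ranges so that every $\rho_{k+2,n}$ invoked by the recursion is actually defined (which holds because the inductive step is only applied for $k<n$).
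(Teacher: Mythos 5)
Your proposal is correct and is essentially the paper's own argument in different packaging: the paper unrolls the recursion of Proposition \ref{PropositionAuxRecursiveRelationsForUpsilonForCombinationsOfRhos} as a telescoping sum terminating at $\upsilon(T(1,1))=0$, which is exactly what your downward induction on $k$ formalizes, and both then obtain the last two formulas from the first via \eqref{EquationObivousRecursiveRelationsAmongTheRhos}. The base case, inductive step, and final algebraic reductions all check out as you describe.
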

\begin{proof}
This is a direct computation using the formulas from Proposition \ref{PropositionAuxRecursiveRelationsForUpsilonForCombinationsOfRhos}. 
\begin{align*}
\upsilon(T(\rho_{k,n}+\rho_{k+1,n}&,\rho_{k,n}-\rho_{k+1,n}))  =\upsilon(T(\rho_{k+1,n}+\rho_{k+2,n},\rho_{k+1,n}-\rho_{k+2,n})) + \cr
& \qquad \qquad \qquad \textstyle +\frac{1}{2} +\frac{1}{4}[(\rho_{k+1,n}^2-\rho_{k,n}^2)-(\rho_{k+2,n}^2-\rho_{k+1,n}^2)] \cr
& =\upsilon(T(\rho_{k+2,n}+\rho_{k+3,n},\rho_{k+2,n}-\rho_{k+3,n})) + \cr
& \qquad \qquad \qquad \textstyle +\frac{1}{2} +\frac{1}{4}[(\rho_{k+1,n}^2-\rho_{k,n}^2)-(\rho_{k+2,n}^2-\rho_{k+1,n}^2)] \cr
& \qquad \qquad \qquad \textstyle +\frac{1}{2} +\frac{1}{4}[(\rho_{k+2,n}^2-\rho_{k+1,n}^2)-(\rho_{k+3,n}^2-\rho_{k+2,n}^2)] \cr
& =\upsilon(T(\rho_{k+3,n}+\rho_{k+4,n},\rho_{k+3,n}-\rho_{k+4,n})) + \cr
& \qquad \qquad \qquad \textstyle +\frac{1}{2} +\frac{1}{4}[(\rho_{k+1,n}^2-\rho_{k,n}^2)-(\rho_{k+2,n}^2-\rho_{k+1,n}^2)] \cr
& \qquad \qquad \qquad \textstyle +\frac{1}{2} +\frac{1}{4}[(\rho_{k+2,n}^2-\rho_{k+1,n}^2)-(\rho_{k+3,n}^2-\rho_{k+2,n}^2)] \cr
& \qquad \qquad \qquad \textstyle +\frac{1}{2} +\frac{1}{4}[(\rho_{k+3,n}^2-\rho_{k+2,n}^2)-(\rho_{k+4,n}^2-\rho_{k+3,n}^2)] \cr
& \qquad \qquad \vdots \cr
& =\upsilon(T(\rho_{n,n}+\rho_{n+1,n},\rho_{n,n}-\rho_{n+1,n})) + \cr
& \qquad \qquad \qquad \textstyle +\frac{1}{2} +\frac{1}{4}[(\rho_{k+1,n}^2-\rho_{k,n}^2)-(\rho_{k+2,n}^2-\rho_{k+1,n}^2)] \cr
& \qquad \qquad \qquad \textstyle +\frac{1}{2} +\frac{1}{4}[(\rho_{k+2,n}^2-\rho_{k+1,n}^2)-(\rho_{k+3,n}^2-\rho_{k+2,n}^2)] \cr
& \qquad \qquad \qquad \textstyle +\frac{1}{2} +\frac{1}{4}[(\rho_{k+3,n}^2-\rho_{k+2,n}^2)-(\rho_{k+4,n}^2-\rho_{k+3,n}^2)] \cr
& \qquad \qquad \qquad \qquad \qquad \qquad \qquad \qquad\vdots \cr
& \qquad \qquad \qquad \textstyle +\frac{1}{2} +\frac{1}{4}[(\rho_{n,n}^2-\rho_{n-1,n}^2)-(\rho_{n+1,n}^2-\rho_{n,n}^2)] \cr
\end{align*}
Recall from Theorem \ref{TheoremSomePropertiesOfRhoKN} that $\upsilon(T(\rho_{n,n}+\rho_{n+1,n},\rho_{n,n}-\rho_{n+1,n})) = \upsilon(T(1,1)) = 0$. Of the remaining summands on the right-hand side above, most cancel with opposite summands in the preceding or proceeding row. The exceptions to this are the terms 
$$\frac{1}{2} + \dots + \frac{1}{2} +\frac{1}{4}((\rho_{k+1,n}^2-\rho_{k,n}^2)-( \rho_{n+1,n}^2-\rho_{n,n}^2))= \frac{1}{2}(n-k) +\frac{1}{4}(\rho_{k+1,n}^2-\rho_{k,n}^2+1).$$ 
This proves the claimed formula for $\upsilon(T(\rho_{k,n}+\rho_{k+1,n}, \rho_{k,n}-\rho_{k+1,n}))$, the remaining two follow from this relation and the relations \eqref{EquationObivousRecursiveRelationsAmongTheRhos}.
\end{proof}
\begin{theorem} \label{TheoremWithTheUltimateUpsilonCalcuationsProved}
Let $p,q>1$ be relatively prime integers and assume that $T(p,q)$ becomes unknotted after $n$ pinch moves. Let $p_0$ be defined as in Theorem \ref{TheoremOnReversingPinchingMoves}, and let $r_n$, $s_n$ be as in \eqref{EquationDefinitionOfRnAndSn}. Then\\

\begin{align*}
\upsilon(T(p,q)) & = \frac{n}{2} + \frac{1}{4}(p_0-pq)  ,\cr  & \cr 
\upsilon(T(r_n+s_n,r_n-s_n)) & =  \textstyle \frac{1}{2}(n-1) +\frac{1}{4}(s_n^2-r_n^2+1), \cr
\upsilon(T(r_n+s_n,2s_n)) & =   \textstyle \frac{1}{2}(n-1-s_n^2-r_ns_n ),\cr
\upsilon(T(r_n-s_n,2s_n)) & =  \textstyle \frac{1}{2}(n-1+s_n^2-r_ns_n ). 
\end{align*}
\end{theorem}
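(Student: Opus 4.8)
The last three formulas require no new work. Since $r_n = \rho_{1,n}$ and $s_n = \rho_{2,n}$ by \eqref{EquationDefinitionOfRnAndSn}, specializing Proposition~\ref{PropositionComputationForGeneralRhosWithAnySign} to $k=1$ and making these two substitutions reproduces verbatim the claimed expressions for $\upsilon(T(r_n+s_n,r_n-s_n))$, $\upsilon(T(r_n-s_n,2s_n))$, and $\upsilon(T(r_n+s_n,2s_n))$. Thus the only substantive assertion is the first formula $\upsilon(T(p,q)) = \tfrac{n}{2}+\tfrac{1}{4}(p_0-pq)$, and the plan is to obtain it by assembling the three reduction stages already in place.

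Writing $(p,q)=(p_n,q_n)$, I would first invoke Proposition~\ref{PropositionReducingGeneralPNCaseToMultipleOfP0} to express $\upsilon(T(p_n,q_n))$ through $\upsilon(T(aq_n-2\eps_1 r_n,q_n))$ for the value $a\in\{0,1,2\}$ dictated by the parity and size of $p_0$, picking up the correction term $-\tfrac{p_0-a}{4}[q_n^2-1]$. Next I would feed the resulting knot into Proposition~\ref{PropositionPropertiesOfUpsilonInTheFirstStep}, in each of the three cases following the top branch so as to land on $\upsilon(T(r_n+s_n,r_n-s_n))$. Finally, inserting the second formula $\upsilon(T(r_n+s_n,r_n-s_n)) = \tfrac{1}{2}(n-1)+\tfrac{1}{4}(s_n^2-r_n^2+1)$ collapses the whole chain into a single polynomial expression in $r_n$, $s_n$, $q_1$, $p_0$, $\eps_1$, and $\eps_2$.

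The remaining task, and the main obstacle, is entirely algebraic: to verify that this polynomial expression equals $\tfrac{n}{2}+\tfrac{1}{4}(p_0-p_nq_n)$. The $n$-dependent contribution $\tfrac{1}{2}(n-1)$ combines with a $+\tfrac{1}{2}$ coming out of the constant terms to give $\tfrac{n}{2}$, and the constant terms further produce exactly $\tfrac{p_0}{4}$, so after removing the $n$- and $p_0$-dependence one is left to check that the remaining $r_n,s_n,q_1$ terms sum to $-\tfrac{1}{4}p_nq_n$. The two identities that drive the cancellation are $q_n = q_1 r_n - \eps_1\eps_2 s_n$ (equivalent to the definition $s_n = \eps_1\eps_2(q_1 r_n - q_n)$ in \eqref{EquationDefinitionOfRnAndSn}) and $p_n = p_0 q_n - 2\eps_1 r_n$. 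Substituting $q_n^2 = q_1^2 r_n^2 - 2\eps_1\eps_2 q_1 r_n s_n + s_n^2$ and $\eps_1 r_n q_n = \eps_1 q_1 r_n^2 - \eps_2 r_n s_n$ into the collected expression, the monomials in $s_n^2$, $q_1^2 r_n^2$, $\eps_1 q_1 r_n^2$, $\eps_1\eps_2 q_1 r_n s_n$, and $\eps_2 r_n s_n$ each cancel in pairs, leaving precisely $\tfrac{1}{4}(p_0-p_nq_n)$. I would display this cancellation in full for the representative case $p_0\ge 2$, and then observe that the cases $p_0\ge 1$ odd and $p_0=0$ (where $\eps_1=-1$) go through by the identical substitution, the sole change being which branch of Propositions~\ref{PropositionReducingGeneralPNCaseToMultipleOfP0} and~\ref{PropositionPropertiesOfUpsilonInTheFirstStep} supplies the correction terms. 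No idea beyond systematic substitution of these two defining relations is needed; the difficulty lies purely in managing the bookkeeping uniformly across the three $p_0$-cases.
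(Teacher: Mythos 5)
Your proposal is correct and takes essentially the same route as the paper's proof: the identical three-stage reduction through Propositions \ref{PropositionReducingGeneralPNCaseToMultipleOfP0}, \ref{PropositionPropertiesOfUpsilonInTheFirstStep} (top branches), and \ref{PropositionComputationForGeneralRhosWithAnySign}, followed by the same algebraic cancellation driven by $q_n = q_1 r_n - \eps_1\eps_2 s_n$ and $p_n = p_0 q_n - 2\eps_1 r_n$. The only cosmetic differences are that the paper collapses your three $p_0$-cases to two by using the $a=1$ branch for every $p_0\ge 1$ (treating only $p_0=0$ separately), and it explicitly endorses your reading of the last three formulas off Proposition \ref{PropositionComputationForGeneralRhosWithAnySign} via $r_n=\rho_{1,n}$, $s_n=\rho_{2,n}$ as an alternative to deriving them from \eqref{EquationRecursionFormulaForUpsilon}.
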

\begin{proof}
If $p_0\ge 1$, then by Proposition \ref{PropositionReducingGeneralPNCaseToMultipleOfP0} we obtain 
$$ \upsilon(T(p_n,q_n)) = \upsilon(T(q_n-2\eps_1r_n,q_n) - \frac{p_0-1}{4}[q_n^2-1].$$
Proposition \ref{PropositionPropertiesOfUpsilonInTheFirstStep} gives the equality 
$$\upsilon (T(q_n-2\eps_1r_n, q_n)) = \textstyle \upsilon(T(r_n + s_n , r_n-s_n))-\frac{1}{4}r_n^2(q_1^2-\eps_12q_1-1) + \frac{1}{2}\eps_1\eps_2 r_ns_n(q_1-\eps_1) +\frac{1}{2} (1-s_n^2)$$
and since $\upsilon(T(r_n + s_n , r_n-s_n)) = \upsilon(T(\rho_{1,n} + \rho_{2,n} , \rho_{1,n} - \rho_{2,n}))$, Proposition \ref{PropositionComputationForGeneralRhosWithAnySign} gives us 
$$\textstyle \upsilon(T(r_n + s_n , r_n-s_n)) = \frac{1}{2}(n-1) + \frac{1}{4} ( s_n^2-r_n^2+1).$$
Combining these 3 relations, and keeping in mind that $r_n = \frac{\eps_1}{2}(p_0q_n - p_n)$ and $s_n=\eps_1\eps_2(q_1r_n- q_n)$, leads to 
\begin{align*}
\upsilon(&T(p_n,q_n))  = \textstyle  \frac{1}{2}(n-1) + \frac{1}{4} ( s_n^2-r_n^2+1) \cr
& \textstyle \qquad \qquad\qquad - \frac{1}{4}r_n^2(q_1^2-\eps_12q_1-1) + \frac{1}{2}\eps_1\eps_2 r_ns_n(q_1-\eps_1) +\frac{1}{2} (1-s_n^2) \cr
& \textstyle \qquad \qquad\qquad- \frac{p_0-1}{4}[q_n^2-1] \cr
& = \textstyle \frac{1}{2}n -\frac{1}{4}s_n^2 -\frac{1}{4}q_1 r_n^2 (q_1-2\eps_1)+ \frac{1}{2}\eps_1\eps_2 r_ns_n(q_1-\eps_1) - \frac{p_0-1}{4}q_n^2  +\frac{1}{4}p_0 \cr
& = \textstyle \frac{1}{2}n -\frac{1}{4}(q_1r_n-q_n)^2 -\frac{1}{4}q_1 r_n^2 (q_1-2\eps_1)+ \frac{1}{2} r_n(q_1r_n-q_n)(q_1-\eps_1)- \frac{p_0-1}{4}q_n^2  +\frac{1}{4}p_0 \cr
& = \textstyle \frac{1}{2}n+\frac{1}{2}\eps_1r_nq_n - \frac{1}{4}p_0q_n^2+\frac{1}{4}p_0 \cr
& = \textstyle \frac{1}{2}n+ \frac{1}{4} q_n(p_0q_n-p_n) - \frac{1}{4}p_0q_n^2+\frac{1}{4}p_0 \cr
& = \textstyle \frac{1}{2}n+\frac{1}{4}(p_0-p_nq_n). 
\end{align*}
The case of $p_0=0$ follows very similarly, with the differences from the case of $p_0\ge 1$ above being that $\upsilon (T(p_n,q_n)) = \upsilon(T(2r_n,q_n))$
and so Proposition \ref{PropositionPropertiesOfUpsilonInTheFirstStep} implies that    
$$ \upsilon(T(2r_n,q_n)) = \upsilon(T(r_n+s_n, r_n-s_n ))-\frac{1}{4}r_n^2(2q_1-1) - \frac{1}{2}\eps_2r_ns_n+\frac{1}{4}(1-s_n^2),$$
with $\upsilon(T(r_n+s_n, r_n-s_n ))$ computed in the same way as in the case of $p_0\ge 1$. Since $\eps_1=-1$ when $p_0=0$, then $r_n = \frac{1}{2}p_n$ and $s_n = -\eps_2(q_1r_n-q_n)$. Using these leads to $\upsilon (T(p_n,q_n)) = \frac{n}{2} -\frac{pq}{4}$. 

Lastly, the formula for $\upsilon(T(r_n + s_n , r_n-s_n))$ above is now easily used in conjunction with the recursive formula for $\upsilon$ \eqref{EquationRecursionFormulaForUpsilon} to derived the claimed formulas for  $\upsilon(T(r_n \pm s_n , 2s_n))$. Alternatively, one can also read these off directly from Proposition \ref{PropositionComputationForGeneralRhosWithAnySign} by utilizing the equalities $r_n=\rho_{1,n}$ and $s_n=\rho_{2,n}$. 
\end{proof}
Theorem \ref{TheoremAboutTheValueOfUpsilonForAllTorusKnots} is the first equality from 
Theorem \ref{TheoremWithTheUltimateUpsilonCalcuationsProved}, and is thereby proved. 
\section{Computing $\upsilon -\frac{1}{2}\sigma$ for torus knots} \label{SectionForSignatureComputations}
\subsection{A recursion formula for the signature of torus knots}
The computation of the signature of torus knots uses the recursive formulas proved by Gordon, Litherland, and Murasugi \cite{GordonLitherlandMurasugi:1981}. The formula we use is changed from its original version by an overall sign so as to be in agreement with the signature function used in \cite{Batson:2014} and \cite{OSS:2017}:
\begin{equation} \label{EquationRecursiveFormulaForSignature}
\sigma(T(a,b)) = 
\left\{
\begin{array}{ll}
\phantom{-}\sigma(T(b-2a,a)) -(a^2-1)    & \quad ; \quad 2a<b \text{ and $a$ is odd, } \cr & \cr   
\phantom{-}\sigma(T(b-2a,a)) -a^2    & \quad ; \quad 2a<b \text{ and $a$ is even, } \cr & \cr   
-\sigma(T(2a-b,a)) -(a^2-1)    & \quad ; \quad a\le b<2a \text{ and $a$ is odd, } \cr & \cr   
-\sigma(T(2a-b,a)) -(a^2-2)    & \quad ; \quad a\le b<2a \text{ and $a$ is even. }
\end{array}
\right. 
\end{equation}
The above relations are valid for relatively prime natural numbers $a, b$, and are accompanied by the \lq\lq boundary conditions\rq\rq \, 
$$\sigma(T(2,b)) = -(b-1)\qquad \text{ and } \qquad \sigma (T(1,b)) =0.$$ 
Our strategy for computing $\upsilon -\frac{1}{2}\sigma$ for a torus knot $T(p,q)$ follows the same 3 step strategy used in the computation of $\upsilon (T(p,q))$, a strategy we outlined at the beginning of Secion \ref{SubSectionComputingUpsilonForAllTorusKnots}. Step 1 for the computations of $\upsilon -\frac{1}{2}\sigma$ is accomplished in Proposition \ref{PropositionForInitialReductionOfUpsilonMinusHalfOfTheSignature} (analogue of Proposition \ref{PropositionReducingGeneralPNCaseToMultipleOfP0}). The second step is taken in Propositions \ref{PropositionSecondReductionStepForTheCaseOfP0BeingEven}, \ref{PropositionSecondReductionStepForTheCaseOfP0BeingOdd} and \ref{PropositionSecondReductionStepForTheCaseOfP0BeingZero} (analogues to Proposition \ref{PropositionPropertiesOfUpsilonInTheFirstStep}, broken into 3 propositions because of the lengths of their proofs), while step 3 is given by Propositions \ref{PropositionThirdReductionStepForTheCaseOfP0BeingEven} and \ref{PropositionThirdReductionStepForTheCaseOfP0BeingOdd} (analogues of Proposition \ref{PropositionAuxRecursiveRelationsForUpsilonForCombinationsOfRhos}).  
\begin{proposition} \label{PropositionForInitialReductionOfUpsilonMinusHalfOfTheSignature}
Let $p,q>1$ be relatively prime integers and assume that $T(p,q)$ becomes unknotted after $n$ pinch moves. Let $(p_n,q_n) = (p,q)$, let $p_0$, $\eps_1$ be defined as in Theorem \ref{TheoremOnReversingPinchingMoves}, and let $r_n$ be as in  \eqref{EquationDefinitionOfRnAndSn}. Recall that $p_n = p_0q_n-2\eps_1r_n$. Then 
\begin{align*}
\textstyle  (\upsilon-\frac{1}{2}\sigma)(T(p_n ,q_n)) =  (\upsilon-\frac{1}{2}\sigma)(T((p_0-2)q_n-2\eps_1r_n ,q_n)). 
\end{align*}
In particular 
$$\textstyle  (\upsilon-\frac{1}{2}\sigma)(T(p_0q_n-2\eps_1r_n ,q_n)) = \left\{
\begin{array}{ll}
\textstyle  (\upsilon-\frac{1}{2}\sigma)(T(2q_n-2\eps_1r_n,q_n)) & \quad ; \quad p_0\ge 2 \text{ is even}, \cr & \cr
\textstyle  (\upsilon-\frac{1}{2}\sigma)(T(q_n-2\eps_1r_n,q_n)) & \quad ; \quad p_0\ge 1 \text{ is odd}, \cr & \cr
\textstyle  (\upsilon-\frac{1}{2}\sigma)(T(2r_n,q_n)) & \quad ; \quad p_0 = 0. \cr
\end{array}
\right.$$
\end{proposition}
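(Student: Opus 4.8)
The plan is to drop the exponent $p_0$ by two in one coordinated application of the two recursion formulas, exploiting the fortunate circumstance that the signature recursion \eqref{EquationRecursiveFormulaForSignature} decreases the large coordinate by $2q_n$ in a single step, exactly matching two steps of the upsilon recursion \eqref{EquationRecursionFormulaForUpsilon}. Throughout I would write $b = p_n = p_0q_n - 2\eps_1 r_n$ and keep in mind three facts: $q_n$ is odd; each intermediate knot $T(cq_n - 2\eps_1 r_n, q_n)$ has coprime parameters, since $\gcd(cq_n - 2\eps_1 r_n, q_n) = \gcd(r_n, q_n) = 1$ (any common divisor of $r_n$ and $q_n$ divides $2\eps_1 r_n = p_0q_n - p_n$ and hence $p_n$, so divides $\gcd(p_n,q_n)=1$); and, crucially, $2r_n < q_n$ by Corollary \ref{CorollaryRelativeSizeOfQnAndRn}.

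First I would record the inequality that selects the correct branch of each recursion. Both reductions require $2q_n < b$, equivalently $(p_0-2)q_n > 2\eps_1 r_n$. Wherever the step is actually invoked in the iteration one has $p_0 \ge 3$, so $(p_0-2)q_n \ge q_n > 2r_n \ge 2\eps_1 r_n$ by the Corollary, and the inequality holds; this same bound also keeps the reduced coordinate $(p_0-2)q_n - 2\eps_1 r_n$ positive. Next, on the upsilon side, since $q_n$ is odd and $q_n < b$, two applications of Lemma \ref{LemmaRecursionFormulaForUpsilon} (equivalently two steps of Proposition \ref{PropositionReducingGeneralPNCaseToMultipleOfP0}, each contributing $-\tfrac14(q_n^2-1)$) give
\[
\upsilon(T(b, q_n)) = \upsilon\big(T((p_0-2)q_n - 2\eps_1 r_n,\, q_n)\big) - \tfrac12(q_n^2-1).
\]
On the signature side I would apply the first line of \eqref{EquationRecursiveFormulaForSignature} — the case $2a<b$ with $a=q_n$ odd — exactly once, using $\sigma(T(a,b))=\sigma(T(b,a))$ to put $q_n$ in the distinguished slot. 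Since $b - 2q_n = (p_0-2)q_n - 2\eps_1 r_n$, this yields in a single step
\[
\sigma(T(b, q_n)) = \sigma\big(T((p_0-2)q_n - 2\eps_1 r_n,\, q_n)\big) - (q_n^2-1).
\]

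Finally I would combine the two displays: subtracting half the signature identity from the upsilon identity, the two contributions of $\tfrac12(q_n^2-1)$ cancel, leaving
\[
(\upsilon - \tfrac12\sigma)(T(p_n, q_n)) = (\upsilon - \tfrac12\sigma)\big(T((p_0-2)q_n - 2\eps_1 r_n,\, q_n)\big),
\]
which is the first assertion. The ``in particular'' statement then follows by iterating, decreasing $p_0$ by $2$ each time: an even $p_0 \ge 2$ terminates at $T(2q_n - 2\eps_1 r_n, q_n)$, an odd $p_0 \ge 1$ at $T(q_n - 2\eps_1 r_n, q_n)$, and when $\eps_1 = -1$ a further reduction (applied to $T(q_n + 2r_n, q_n)$, whose large coordinate exceeds $2q_n$) reaches $T(2r_n, q_n)$ in the case $p_0 = 0$. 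I expect the only genuine subtlety — the main obstacle — to be the bookkeeping of the branch hypothesis $2q_n < b$ at each invoked step and confirming it never fails where the step is used, which is exactly where Corollary \ref{CorollaryRelativeSizeOfQnAndRn} enters; once the correct branches are secured, the matching of the two constants is immediate.
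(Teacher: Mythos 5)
Your core argument is exactly the paper's proof: for the relevant range of $p_0$, one application of the odd-$a$ branch of the signature recursion \eqref{EquationRecursiveFormulaForSignature} drops the large parameter by $2q_n$ at a cost of $-(q_n^2-1)$, while two applications of Lemma \ref{LemmaRecursionFormulaForUpsilon} drop it by the same amount at a cost of $-\tfrac{1}{2}(q_n^2-1)$, and the two costs cancel in $\upsilon-\tfrac{1}{2}\sigma$; the ``in particular'' follows by iterating down to $p_0=2$ or $p_0=1$. Your explicit verification of the branch condition $2q_n<p_0q_n-2\eps_1 r_n$ via Corollary \ref{CorollaryRelativeSizeOfQnAndRn}, and of coprimality of the intermediate parameters, is correct and slightly more careful than the paper, which leaves both points implicit.

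Your handling of the case $p_0=0$, however, is wrong. You assert that $T(q_n+2r_n,q_n)$ has large coordinate exceeding $2q_n$ and can therefore be reduced once more to $T(2r_n,q_n)$. This contradicts the very corollary you cite: $2r_n<q_n$ gives $q_n+2r_n<2q_n$, so the needed branch of \eqref{EquationRecursiveFormulaForSignature} is unavailable. Moreover the purported step lowers the large coordinate by $q_n$ rather than $2q_n$, so it is not an instance of your established identity, and the identity it would assert, $(\upsilon-\tfrac{1}{2}\sigma)(T(q_n+2r_n,q_n))=(\upsilon-\tfrac{1}{2}\sigma)(T(2r_n,q_n))$, is false in general: for $n=1$, $q_1=3$, $\eps_1=-1$ it would equate the value $1$ for $T(5,3)$ with the value $0$ for $T(2,3)$. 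Fortunately the error is harmless to the proposition: when $p_0=0$, Theorem \ref{TheoremOnReversingPinchingMoves} forces $\eps_1=-1$, so $p_0q_n-2\eps_1r_n=2r_n$ and the third case is a tautology --- no reduction is needed at all. (The paper relates the $p_0=0$ knot to $T(2q_n+2r_n,q_n)$, a genuine drop by $2q_n$, only later, in Proposition \ref{PropositionSecondReductionStepForTheCaseOfP0BeingZero}.) With that case read correctly, your proof is complete and coincides with the paper's.
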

\begin{proof}
If $p_0\ge 3$ then the recursive signature formula \eqref{EquationRecursiveFormulaForSignature} gives 
\begin{align*}
\sigma(T(p_0q_n-2\eps_1r_n,q_n)) & = \sigma(T((p_0-2)q_n-2\eps_1r_n,q_n)) - [q_n^2-1]. 
\end{align*}
On the other hand, the recursive formula \eqref{EquationRecursionFormulaForUpsilon} for $\upsilon$ gives
\begin{align*}
\upsilon(T(p_0q_n-2\eps_1r_n,q_n)) & = \upsilon(T((p_0-1)q_n-2\eps_1r_n,q_n)) - \frac{1}{4} [q_n^2-1]\cr
& = \upsilon(T((p_0-2)q_n-2\eps_1r_n,q_n)) - \frac{1}{2} [q_n^2-1]
\end{align*}
The combination of the two proves the proposition. 
\end{proof}
The preceding proposition shows that in order to compute $(\upsilon -\frac{1}{2} \sigma)(T(p,q))$, it suffices to do so only for the case of $p_0=0, 1, 2$, three cases which we shall address separately. 
\begin{proposition} \label{PropositionSecondReductionStepForTheCaseOfP0BeingEven}
Let $p,q>1$ be relatively prime integers and assume that $T(p,q)$ becomes unknotted after $n$ pinch moves. Let $(p_n,q_n) = (p,q)$, let $p_0$, $\eps_1$, $\eps_2$ be defined as in Theorem \ref{TheoremOnReversingPinchingMoves}, and let $r_n$, $s_n$ be as in \eqref{EquationDefinitionOfRnAndSn}. Set $\eta=1$ if $q_1\equiv 1\,(\text{mod }4)$, and set $\eta =-1$ if $q_1\equiv 3\,(\text{mod }4)$. Then 
\begin{align*}
 (\upsilon  - \textstyle \frac{1}{2}\sigma)&(T(2q_n-2\eps_1r_n,q_n))  =  \cr
& =  \left\{ 
\begin{array}{llll}
\textstyle -(\upsilon -\frac{1}{2}\sigma)(T(r_n-\eta s_n,2s_n)) +n & \qquad  \eps_1=1, &\eps_2=1, \cr
\textstyle \phantom{-}(\upsilon -\frac{1}{2}\sigma)(T(r_n+\eta s_n,2s_n)) +1 & \qquad  \eps_1=1, &\eps_2=-1,\cr
\textstyle -(\upsilon -\frac{1}{2}\sigma)(T(r_n +  \eta s_n , 2s_n))+(n-1)  & \qquad  \eps_1=-1, &\eps_2=1,\cr
\textstyle \phantom{-}(\upsilon -\frac{1}{2}\sigma)(T(r_n -  \eta s_n , 2s_n)) & \qquad  \eps_1=-1, &\eps_2=-1.
\end{array}
\right.
\end{align*}
\end{proposition}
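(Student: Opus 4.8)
The plan is to compute $\sigma(T(2q_n-2\eps_1r_n,q_n))$ directly from the recursion \eqref{EquationRecursiveFormulaForSignature} and to combine the result with the $\upsilon$-computation already carried out in Proposition \ref{PropositionPropertiesOfUpsilonInTheFirstStep}. Writing $\nu:=\upsilon-\frac{1}{2}\sigma$ for brevity, the first observation I would record is a clean \emph{parity-free} recursion for $\nu$ in the ``straight descent'' regime: if $a,b$ are coprime with $2a<b$, then two applications of \eqref{EquationRecursionFormulaForUpsilon} give $\upsilon(T(a,b))=\upsilon(T(b-2a,a))-2c_\upsilon$ while one application of the first two lines of \eqref{EquationRecursiveFormulaForSignature} gives $\sigma(T(a,b))=\sigma(T(b-2a,a))-c_\sigma$, where in both parities $2c_\upsilon=\tfrac{1}{2}c_\sigma$ (namely $c_\upsilon=\tfrac{a^2}{4},\,c_\sigma=a^2$ for $a$ even, and $c_\upsilon=\tfrac{a^2-1}{4},\,c_\sigma=a^2-1$ for $a$ odd). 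Hence $\nu(T(a,b))=\nu(T(b-2a,a))$ whenever $2a<b$. This is the whole point of passing to $\nu$: every step of the descent that stays in the regime $2a<b$ leaves $\nu$ unchanged, independently of parities, so those steps contribute nothing.

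The only steps that matter are therefore the ``reflection'' steps, where $a\le b<2a$ and \eqref{EquationRecursiveFormulaForSignature} introduces an overall sign, $\sigma(T(a,b))=-\sigma(T(2a-b,a))-c'_\sigma$. Here $\upsilon$ reduces to $T(b-a,a)$ while $\sigma$ reduces to $T(2a-b,a)$, two genuinely different knots, so $\nu$ has no local recursion and I would track $\upsilon$ and $\sigma$ separately. I would organize the descent as follows. The initial move on $T(2q_n-2\eps_1r_n,q_n)=T(q_n,2q_n-2\eps_1r_n)$ is a straight step when $\eps_1=-1$ (since $2q_n<2q_n+2r_n$) and a single reflection when $\eps_1=1$ (since $q_n\le 2q_n-2r_n<2q_n$ using $2r_n<q_n$ from Corollary \ref{CorollaryRelativeSizeOfQnAndRn}); in either case one is reduced to $T(2r_n,q_n)$. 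Writing $q_n=q_1r_n-\eps_1\eps_2 s_n$ and repeatedly applying the clean recursion lowers the coefficient of $r_n$ by $4$ at a time, from $q_1$ down to $1$ or $3$ according to $q_1\bmod 4$; the inequalities of Theorem \ref{TheoremSomePropertiesOfRhoKN} and Corollary \ref{CorollaryRelativeSizeOfQnAndRn} guarantee that we stay in the regime $2a<b$ throughout this part. The final one or two steps, which land on $T(r_n\pm s_n,2s_n)$, are reflections whose number is dictated by the sign $\eps_1\eps_2$ (the sign of $s_n$ in $q_n$) together with $q_1\bmod 4$.

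A short case check then shows that the \emph{total} number of reflection steps is odd exactly when $\eps_2=1$ and even exactly when $\eps_2=-1$, which accounts for the overall sign $\mp$ in front of $\nu$ in the four cases, while $q_1\bmod 4$ determines whether the landing knot is $T(r_n-s_n,2s_n)$ or $T(r_n+s_n,2s_n)$, i.e. the value of $\eta$. When the net sign on $\sigma$ is positive ($\eps_2=-1$) and the landing knots of $\upsilon$ and $\sigma$ agree, the two computations combine directly into $+\nu(T(r_n\pm\eta s_n,2s_n))$; when they differ I would reconcile them using the elementary one-step identity $\upsilon(T(r_n-s_n,2s_n))=\upsilon(T(r_n+s_n,2s_n))+s_n^2$ from \eqref{EquationAuxiliaryFormulasForRnAndSn}. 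When the net sign on $\sigma$ is negative ($\eps_2=1$), combining produces $\upsilon(T_{\mathrm{final}})+\tfrac{1}{2}\sigma(T_{\mathrm{final}})=2\upsilon(T_{\mathrm{final}})-\nu(T_{\mathrm{final}})$, and here I would substitute the closed-form value of $\upsilon(T(r_n\pm s_n,2s_n))$ from Theorem \ref{TheoremWithTheUltimateUpsilonCalcuationsProved}; this is precisely what injects the integer $n$ into the constants $n$ and $n-1$.

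The main obstacle is the sign and regime bookkeeping: one must verify at each step which branch of \eqref{EquationRecursiveFormulaForSignature} applies, count the reflection steps correctly in all four $(\eps_1,\eps_2)$ cases and both residues of $q_1\bmod 4$, and then check that the accumulated corrections $c_\sigma,c'_\sigma$ together with the substituted $2\upsilon(T_{\mathrm{final}})$ collapse to exactly $n,\,1,\,n-1,\,0$. The parity-free recursion for $\nu$ removes most of this burden by annihilating every straight-descent step, so that only the finitely many reflection steps, whose count is controlled by $\eps_1$, $\eps_2$ and $q_1\bmod 4$, need to be followed explicitly.
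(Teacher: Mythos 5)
Your proposal is correct, and its organizing idea is genuinely different from the paper's proof. The paper never isolates the fact that $\upsilon-\frac{1}{2}\sigma$ is invariant under a straight descent step; instead it grinds the signature recursion \eqref{EquationRecursiveFormulaForSignature} through every step of the descent in eight separate cases, accumulating all quadratic correction terms (its equations \eqref{EquationFirstStepRecuctionForSigmaCase1}--\eqref{EquationFirstStepRecuctionForSigmaCase4}), does the same for $\upsilon$ in Proposition \ref{PropositionPropertiesOfUpsilonInTheFirstStep}, and then watches the quadratic terms cancel in the combination. Your key lemma --- that two $\upsilon$-steps match one $\sigma$-step so that $(\upsilon-\frac{1}{2}\sigma)(T(a,b))=(\upsilon-\frac{1}{2}\sigma)(T(b-2a,a))$ whenever $2a<b$, in both parities of $a$ --- explains that cancellation a priori and reduces the entire computation to the at most three reflection steps; I checked that carrying out your plan in all eight $(\eps_1,\eps_2,q_1\bmod 4)$ configurations reproduces the four stated formulas, that your reflection-count parity claim (odd iff $\eps_2=1$) holds in every configuration, and that your use of the closed forms from Theorem \ref{TheoremWithTheUltimateUpsilonCalcuationsProved} at sign-flip steps is exactly the same non-circular ingredient the paper itself invokes. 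What your route buys is brevity and a conceptual reason for the collapse of the bookkeeping; what the paper's route buys is explicit intermediate signature reductions stated for each case. Two small imprecisions to fix in a write-up: after a reflection, $\upsilon$ and $\sigma$ land on \emph{different} knots ($T(b-a,a)$ versus $T(2a-b,a)$), so "reduced to $T(2r_n,q_n)$" requires the extra $\upsilon$-recursion step or closed-form evaluation you describe later; and the landing knot $T(r_n\pm s_n,2s_n)$ is determined by the product $\eta\,\eps_1\eps_2$, not by $q_1\bmod 4$ alone --- both points come out automatically in the case check you propose.
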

\begin{proof}
The signature computations proceed in slightly different ways depending on the value of $\eps_1$, $\eps_2$ and on the modulus of $q_1$ with respect to 4, leading us to consider 8 cases in all. We begin with the case of $\eps_1=1$, and consider firstly the event that $q_1\equiv 1\, (\text{mod }4)$. After several steps this calculation then splits itself in two cases corresponding to $\eps_2=\pm 1$. Recall that $q_n = q_1r_n-\eps_1\eps_2s_n$. 
\begin{align*}
\sigma (T(2q_n-2r_n,q_n)) & = \textstyle -\sigma (T(q_n,2r_n)) - [q_n^2-1] \cr
& = \textstyle -\sigma (T(q_1r_n-\eps_2 s_n ,    2r_n)) - [q_n^2-1] \cr
& =  \textstyle -\sigma(T(r_n(q_1-4)-\eps_2 s_n, 2r_n)) + 4r_n^2 - [q_n^2-1]  \cr
& \qquad \qquad \vdots \cr
& =  \textstyle -\sigma(T(r_n - \eps_2  s_n , 2r_n)) + (q_1-1) r_n^2 - [q_n^2-1]
\end{align*}
The computation proceeds in slightly different ways depending on the value of $\eps_2$. We first address the case of $\eps_2=1$. 
\begin{align*}
\sigma (T(2q_n-2r_n,q_n)) & =   \textstyle -\sigma(T(r_n -   s_n , 2r_n)) + (q_1-1) r_n^2 - [q_n^2-1] \cr
& =  \textstyle -\sigma(T(r_n - s_n , 2s_n)) + [(r_n-s_n)^2-1]  + (q_1-1) r_n^2 - [q_n^2-1]  \cr
& =  \textstyle -\sigma(T(r_n - s_n , 2s_n)) -r_n^2q_1(q_1-1) +2r_ns_n (q_1-1). 
\end{align*}  
If $\eps_2=-1$, then 
\begin{align*}
\sigma (T(2q_n-2r_n,q_n)) & =  \textstyle -\sigma(T(r_n +   s_n , 2r_n)) + (q_1-1) r_n^2 - [q_n^2-1] \cr
& =  \textstyle \sigma(T(r_n + s_n , 2s_n )) + [(r_n+s_n)^2-1]  + (q_1-1) r_n^2 - [q_n^2-1]  \cr
& =  \textstyle \sigma(T(r_n + s_n , 2s_n)) -r_n^2q_1(q_1-1) -2r_ns_n (q_1-1). 
\end{align*}
The preceding two calculations can be summarized in a single relation (still only valid for $q_1\equiv 1\,(4)$ and $\eps_1=1$)
\begin{equation} \label{EquationFirstStepRecuctionForSigmaCase1}\sigma (T(2q_n-2r_n,q_n)) = -\eps_2 \sigma(T(r_n - \eps_2 s_n , 2s_n)) -r_n^2q_1(q_1-1) +2 \eps_2 r_ns_n (q_1-1). 
\end{equation}
If $q_1\equiv 3\, (\text{mod }4)$, then
\begin{align*}
\sigma (T(2q_n-2r_n,q_n)) & = \textstyle -\sigma (T(q_n,2r_n) - [q_n^2-1] \cr
& =  \textstyle -\sigma (T(q_1r_n-\eps_2s_n ,2r_n)) - [q_n^2-1] \cr
& =  \textstyle -\sigma(T(r_n(q_1-4) -\eps_2s_n , 2r_n)) + 4r_n^2 - [q_n^2-1]  \cr
& \qquad \qquad \vdots \cr
& =  \textstyle -\sigma(T(3r_n - \eps_2 s_n , 2r_n)) + (q_1-3) r_n^2 - [q_n^2-1]  \cr
& =  \textstyle \sigma(T(r_n + \eps_2 s_n , 2r_n)) + (4r_n^2-2) +  (q_1-3) r_n^2 - [q_n^2-1]  
\end{align*}
We continue by differentiating between $\eps_2=1$ and $\eps_2=-1$, starting with the former. 
\begin{align*} 
\sigma (T(2q_n-2r_n,q_n)) & = \textstyle \sigma(T(r_n +  s_n , 2r_n)) + (4r_n^2-2) +  (q_1-3) r_n^2 - [q_n^2-1]  \cr
& =  \textstyle -\sigma(T(r_n + s_n , 2s_n))  - [(r_n+s_n)^2-1] + (4r_n^2-2) +  (q_1-3) r_n^2 - [q_n^2-1]  \cr
& =  \textstyle -\sigma(T(r_n + s_n , 2s_n))  -r_n^2q_1(q_1-1) +2r_ns_n (q_1-1)-2s_n^2. 
\end{align*}
Here is the calculation for the case of $\eps_2=-1$:
\begin{align*} 
\sigma (T(2q_n-2r_n,q_n)) & = \textstyle \sigma(T(r_n -  s_n , 2r_n)) + (4r_n^2-2) +  (q_1-3) r_n^2 - [q_n^2-1]  \cr
& =  \textstyle \sigma(T(r_n - s_n , 2s_n))  - [(r_n-s_n)^2-1] + (4r_n^2-2) +  (q_1-3) r_n^2 - [q_n^2-1]  \cr
& =  \textstyle \sigma(T(r_n - s_n , 2s_n))  -r_n^2q_1(q_1-1) -2r_ns_n (q_1-1)-2s_n^2.    
\end{align*}
These last two calculation can again be summarized in a single relation (valid if $\eps_1=1$ and $q_1\equiv 3\,(4)$)
\begin{equation} \label{EquationFirstStepRecuctionForSigmaCase2}
\sigma (T(2q_n-2r_n,q_n)) = -\eps_2 \sigma(T(r_n +\eps_2 s_n , 2s_n)  -r_n^2q_1(q_1-1)+  2\eps_2 r_ns_n (q_1-1)-2s_n^2.
\end{equation}
Next we turn to the case of $\eps_1=-1$ and calculate the signature of $T(2q_n+2r_n,q_n)$. Here too we first consider the case of $q_1\equiv 1\,(4)$.
\begin{align*}
\sigma (T(2q_n+2r_n,q_n)) & = \textstyle \sigma (T(q_n,2r_n)) - [q_n^2-1] \cr
& = \textstyle \sigma (T(q_1r_n+\eps_2 s_n ,    2r_n)) - [q_n^2-1] \cr
& =  \textstyle \sigma(T(r_n(q_1-4)+\eps_2 s_n, 2r_n)) - 4r_n^2 - [q_n^2-1]  \cr
& \qquad \qquad \vdots \cr
& =  \textstyle \sigma(T(r_n + \eps_2  s_n , 2r_n)) - (q_1-1) r_n^2 - [q_n^2-1]
\end{align*}
If $\eps_2=1$, we proceed as follows:
\begin{align*}
\sigma (T(2q_n+2r_n,q_n)) & =  \textstyle \sigma(T(r_n +  s_n , 2r_n)) - (q_1-1) r_n^2 - [q_n^2-1] \cr
 & =  \textstyle -\sigma(T(r_n +  s_n , 2s_n)) - [(r_n+s_n)^2-1] - (q_1-1) r_n^2 - [q_n^2-1] \cr
& =  \textstyle -\sigma(T(r_n +  s_n , 2s_n)) -r_n^2q_1(q_1+1) - 2r_ns_n(q_1+1) +2(1-s_n^2).
\end{align*}
If $\eps_2=-1$, then 
\begin{align*}
\sigma (T(2q_n+2r_n,q_n)) & =  \textstyle \sigma(T(r_n -  s_n , 2r_n)) - (q_1-1) r_n^2 - [q_n^2-1] \cr
 & =  \textstyle \sigma(T(r_n -  s_n , 2s_n)) - [(r_n-s_n)^2-1] - (q_1-1) r_n^2 - [q_n^2-1] \cr
& =  \textstyle \sigma(T(r_n -  s_n , 2s_n)) -r_n^2q_1(q_1+1) + 2r_ns_n(q_1+1) +2(1-s_n^2).
\end{align*}
Both cases can be summarized with (still with $\eps_1=-1$ and $q_1\equiv1\,(4)$)
\begin{equation} \label{EquationFirstStepRecuctionForSigmaCase3}
\sigma (T(2q_n+2r_n,q_n))  =  \textstyle -\eps_2\sigma(T(r_n +\eps_2  s_n , 2s_n)) -r_n^2q_1(q_1+1) - 2\eps_2 r_ns_n(q_1+1) +2(1-s_n^2).
\end{equation}
Turning to the case of $q_1\equiv 3\,(4)$, we find
\begin{align*}
\sigma (T(2q_n+2r_n,q_n)) & = \textstyle \sigma (T(q_n,2r_n)) - [q_n^2-1] \cr
& = \textstyle \sigma (T(q_1r_n+\eps_2 s_n ,    2r_n)) - [q_n^2-1] \cr
& =  \textstyle \sigma(T(r_n(q_1-4)+\eps_2 s_n, 2r_n)) - 4r_n^2 - [q_n^2-1]  \cr
& \qquad \qquad \vdots \cr
& =  \textstyle \sigma(T(3r_n + \eps_2  s_n , 2r_n)) - (q_1-3) r_n^2 - [q_n^2-1] \cr
& =  \textstyle -\sigma(T(r_n - \eps_2  s_n , 2r_n)) - (4r_n^2-2) - (q_1-3) r_n^2 - [q_n^2-1]
\end{align*}
If $\eps_2=1$, then 
\begin{align*}
\sigma (T(2q_n+2r_n,& q_n))  = \textstyle -\sigma(T(r_n -  s_n , 2r_n)) - (4r_n^2-2)- (q_1-3) r_n^2 - [q_n^2-1] \cr
& =  \textstyle -\sigma(T(r_n -  s_n , 2s_n )) +[(r_n-s_n)^2-1]    - (4r_n^2-2) - (q_1-3) r_n^2 - [q_n^2-1] \cr
& =  \textstyle -\sigma(T(r_n -  s_n , 2s_n ))-r_n^2q_1(q_1+1) -2r_ns_n(q_1+1) +2,
\end{align*}
while if $\eps_2=-1$, then 
\begin{align*}
\sigma (T(2q_n+2r_n,& q_n))  = \textstyle -\sigma(T(r_n +  s_n , 2r_n)) - (4r_n^2-2)- (q_1-3) r_n^2 - [q_n^2-1] \cr
& =  \textstyle \sigma(T(r_n +  s_n , 2s_n )) +[(r_n+s_n)^2-1]    - (4r_n^2-2) - (q_1-3) r_n^2 - [q_n^2-1] \cr
& =  \textstyle \sigma(T(r_n +  s_n , 2s_n ))-r_n^2q_1(q_1+1) +2r_ns_n(q_1+1) +2.
\end{align*}
The two can be summarized (with $\eps_1=-1$ and $q_1\equiv 3\,(4)$) as 
\begin{equation} \label{EquationFirstStepRecuctionForSigmaCase4}
\sigma (T(2q_n+2r_n, q_n))  =  -\eps_2  \sigma(T(r_n -\eps_2 s_n , 2s_n ))-r_n^2q_1(q_1+1) -2\eps_2 r_ns_n(q_1+1) +2.
\end{equation}

We now take the signature formulas \eqref{EquationFirstStepRecuctionForSigmaCase1} -- \eqref{EquationFirstStepRecuctionForSigmaCase4} and combine them with similar formulas for $\upsilon$ calculated in Proposition \ref{PropositionPropertiesOfUpsilonInTheFirstStep}. There are eight cases, those where the value of $\eps_1$ and $\eps_2$ vary between $-1$ and $1$, the congruence class of $q_1$ modulo 4 varies between 1 and 3. Each case demands the exact same pattern of substitution, cancelation, and simplification. We do the first two cases, which show all the needed techniques, and leave the remaining cases to the reader. 

If $\eps_1=1$, $\eps_2=1$, and $q_1\equiv 1\,(\text{mod }4)$, then 
\begin{align*}
(\upsilon & \textstyle-\frac{1}{2}\sigma)(T(2q_n-2r_n,q_n))  =\textstyle \upsilon(T(2q_n-2r_n,q_n)) -\frac{1}{2}\sigma(T(2q_n-2r_n,q_n)) \cr
& = \upsilon (T(r_n-s_n,2s_n)) - \textstyle \frac{1}{2}r_n^2q_1(q_1-1) + r_n s_n q_1 +(1-s_n^2) + \cr
&\qquad \qquad \qquad \textstyle + \frac{1}{2}\sigma(T(r_n-s_n,2s_n))  +\frac{1}{2} r_n^2q_1(q_1-1) -r_ns_n (q_1-1) \cr
& = \textstyle -(\upsilon -\frac{1}{2}\sigma)(T(r_n-s_n,2s_n)) +2  \upsilon (T(r_n-s_n,2s_n)) +(1-s_n^2) +r_ns_n \cr
& = \textstyle -(\upsilon -\frac{1}{2}\sigma)(T(r_n-s_n,2s_n)) +(n -1-  s_nr_n+s_n^2)    +(1-s_n^2) +r_ns_n \cr
& = \textstyle -(\upsilon -\frac{1}{2}\sigma)(T(r_n-s_n,2s_n)) +n.
\end{align*}
If $\eps_1=1$, $\eps_2=-1$, and $q_1\equiv 1\,(4)$, then  
\begin{align*}
(\upsilon & \textstyle-\frac{1}{2}\sigma)(T(2q_n-2r_n,q_n))  =\textstyle \upsilon(T(2q_n-2r_n,q_n)) -\frac{1}{2}\sigma(T(2q_n-2r_n,q_n)) \cr
& = \upsilon (T(r_n+s_n,2s_n))- \textstyle \frac{1}{2}r_n^2q_1(q_1-1) -r_n s_n (q_1-1) +1 \cr
&\qquad \qquad \qquad \textstyle - \frac{1}{2}\sigma(T(r_n+s_n,2s_n)) +\frac{1}{2} r_n^2q_1(q_1-1) + r_ns_n (q_1-1) \cr
& = \textstyle (\upsilon -\frac{1}{2}\sigma)(T(r_n+s_n,2s_n)) +1.
\end{align*}
Continuing in this manner, the proof is complete.
\end{proof}
\begin{proposition}  \label{PropositionSecondReductionStepForTheCaseOfP0BeingOdd}
Let $p,q>1$ be relatively prime integers and assume that $T(p,q)$ becomes unknotted after $n$ pinch moves. Let $(p_n,q_n) = (p,q)$, let $p_0$, $\eps_1$, $\eps_2$ be defined as in Theorem \ref{TheoremOnReversingPinchingMoves}, and let $r_n$, $s_n$ be as in \eqref{EquationDefinitionOfRnAndSn}. Set $\eta=1$ if $q_1\equiv 1\,(\text{mod }4)$, and set $\eta =-1$ if $q_1\equiv 3\,(\text{mod }4)$. Then 
\begin{align*}
\textstyle (\upsilon -\frac{1}{2}\sigma)(T(q_n-2\eps_1 r_n,q_n)) & = \left\{
\begin{array}{lll}
\textstyle -(\upsilon -\frac{1}{2}\sigma)(T(r_n+s_n,r_n-s_n)) +n & \quad \eta \eps_1  = 1, \cr &&\cr
\textstyle \phantom{-} (\upsilon -\frac{1}{2}\sigma)(T(r_n+s_n,r_n-s_n)) & \quad \eta \eps_1 = -1.
\end{array}
\right.
\end{align*}
\end{proposition}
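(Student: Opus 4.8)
The plan is to follow the template of the proof of Proposition~\ref{PropositionSecondReductionStepForTheCaseOfP0BeingEven}: compute $\sigma(T(q_n-2\eps_1 r_n,q_n))$ outright from the recursive signature formula~\eqref{EquationRecursiveFormulaForSignature}, and then add it to the value of $\upsilon(T(q_n-2\eps_1 r_n,q_n))$ already recorded in the middle block of Proposition~\ref{PropositionPropertiesOfUpsilonInTheFirstStep}. Throughout I would use the identity $q_n=q_1r_n-\eps_1\eps_2 s_n$ together with the size bounds $s_n<r_n$ (Theorem~\ref{TheoremSomePropertiesOfRhoKN}) and $2r_n<q_n$ (Corollary~\ref{CorollaryRelativeSizeOfQnAndRn}) to decide which branch of~\eqref{EquationRecursiveFormulaForSignature} is active at each step.

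The observation that makes the signature reduction tractable is that both parameters of $T(q_n-2\eps_1 r_n,q_n)$ carry the \emph{same} offset: writing $o=-\eps_1\eps_2 s_n$, we have $q_n-2\eps_1 r_n=(q_1-2\eps_1)r_n+o$ and $q_n=q_1r_n+o$. The two parameters are both odd and differ by $2r_n$, so the recursion $T(a,b)\mapsto T(|b-2a|,a)$ lowers the $r_n$-coefficient of each parameter by $2$ at every step while preserving the offset $o$, and it only ever invokes the two odd-$a$ branches of~\eqref{EquationRecursiveFormulaForSignature}. Because $|o|<r_n$, the branch $a\le b<2a$ (which flips the sign of $\sigma$ and contributes $-(a^2-1)$) is used precisely at the steps whose larger $r_n$-coefficient is at least $5$, while the terminal step, at $r_n$-coefficients $(1,3)$, falls in the branch $2a<b$ and lands on $T(r_n+\eps_1\eps_2 s_n,r_n-\eps_1\eps_2 s_n)=T(r_n+s_n,r_n-s_n)$ (the same knot for either sign of $\eps_1\eps_2$, which is why $\eps_2$ drops out of the final answer). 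Counting the sign-flipping steps in each of the four cases $\eps_1\in\{\pm1\}$, $q_1\equiv 1,3\pmod 4$ then shows that the accumulated sign is $(-1)^{\#\text{flips}}=-\eta\eps_1$, which is $-1$ exactly when $\eta\eps_1=1$. Thus the reduction yields an identity of the form $\sigma(T(q_n-2\eps_1 r_n,q_n))=-\eta\eps_1\,\sigma(T(r_n+s_n,r_n-s_n))+Q$, where $Q$ is the explicit polynomial in $r_n,s_n,q_1$ obtained by summing the $-(a^2-1)$ contributions.

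Next I would form $(\upsilon-\tfrac12\sigma)(T(q_n-2\eps_1 r_n,q_n))=\upsilon(T(q_n-2\eps_1 r_n,q_n))-\tfrac12\sigma(T(q_n-2\eps_1 r_n,q_n))$, inserting the $\upsilon$-value from the middle block of Proposition~\ref{PropositionPropertiesOfUpsilonInTheFirstStep}. Exactly as in the even case, the $\upsilon$ and $-\tfrac12\sigma$ contributions organize themselves into $\mp(\upsilon-\tfrac12\sigma)(T(r_n+s_n,r_n-s_n))$ together with a multiple of $\upsilon(T(r_n+s_n,r_n-s_n))$ and a quadratic remainder in $r_n,s_n$. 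I would then replace $\upsilon(T(r_n+s_n,r_n-s_n))$ by its closed form $\tfrac12(n-1)+\tfrac14(s_n^2-r_n^2+1)$ from Theorem~\ref{TheoremWithTheUltimateUpsilonCalcuationsProved}; the quadratic pieces cancel, and what survives is the additive constant $n$ when $\eta\eps_1=1$ and $0$ when $\eta\eps_1=-1$, with the overall sign $\mp$ dictated by $\eta\eps_1$ as found in the signature step.

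The main obstacle is the bookkeeping in the signature peeling: correctly accumulating the $-(a^2-1)$ terms across the roughly $q_1/2$ steps of the iteration, tracking the sign flips, and then confirming that the resulting polynomial $Q$ combines with both the $\upsilon$-correction of Proposition~\ref{PropositionPropertiesOfUpsilonInTheFirstStep} and the closed form of $\upsilon(T(r_n+s_n,r_n-s_n))$ so that every term quadratic in $r_n,s_n$ vanishes. A reassuring consistency check is that the final answer must be independent of $\eps_2$, even though $\eps_2$ enters through the offset $o$ and through the intermediate formulas; the identity $T(r_n+\eps_1\eps_2 s_n,r_n-\eps_1\eps_2 s_n)=T(r_n+s_n,r_n-s_n)$ is what forces this, so any residual $\eps_2$-dependence in the computation would flag an error. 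As in the even case, it should suffice to write out one or two of the four $(\eps_1,\,q_1\bmod 4)$ cases in full, the remaining ones being strictly parallel.
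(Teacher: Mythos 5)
Your proposal is correct and follows essentially the same route as the paper's proof: peel off $2r_n$ at a time with the Gordon--Litherland--Murasugi recursion (tracking which odd-$a$ branch applies and the accumulated sign, which is $-\eta\eps_1$ exactly as you computed), then combine with the $\upsilon$-formula of Proposition~\ref{PropositionPropertiesOfUpsilonInTheFirstStep} and the closed form $\upsilon(T(r_n+s_n,r_n-s_n))=\tfrac12(n-1)+\tfrac14(s_n^2-r_n^2+1)$ so that the quadratic terms cancel and the constants $n$ or $0$ survive. Your uniform offset-and-coefficient bookkeeping is just a cleaner packaging of the paper's four explicit case computations with alternating sums; the underlying argument is identical.
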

\begin{proof}
The proof of this proposition consists of two parts. In the first part, we use the recursive signature formula \eqref{EquationRecursiveFormulaForSignature} to find relations between the signature of $T(q_n-2\eps_1r_n,q_n)$ and $T(r_n+s_n,r_n-s_n)$, and in the second part, we combine these relations with the analogous upsilon computations from Section \ref{SectionOnComputingTheUpsilonInvariant} to obtain the formulas claimed. 

We begin with signature computations, where we consider several subcases, starting with $\eps_1=1$. Recall that $q_n = q_1r_n-\eps_1\eps_2s_n$.  
\begin{align*}
\sigma (T&(q_n-2r_n,q_n))  = \sigma(T(r_n(q_1-2) - \eps_2 s_n, q_1r_n-\eps_2 s_n)) \cr  
& =  -\sigma (T(r_n(q_1-4) - \eps_2 s_n, r_n(q_1-2) -\eps_2 s_n)) - [(r_n(q_1-2) -\eps_2 s_n)^2-1]  \cr
& = \sigma (T( r_n(q_1 -6) -\eps_2 s_n ,r_n(q_1-4) -\eps_2 s_n ))   + \sum_{k=1}^2 (-1)^k   [(r_n(q_1-2k) -\eps_2 s_n)^2-1]  \cr  
& \qquad \qquad \vdots \cr
& = (-1)^\frac{q_1-3}{2}\sigma(T(r_n -\eps_2 s_n ,3r_n -\eps_2 s_n ))   + \sum_{k=1}^{(q_1-3)/2} (-1)^k   [(r_n(q_1-2k) -\eps_2 s_n)^2-1]  \cr  
& = (-1)^\frac{q_1-3}{2}\sigma(T(r_n - \eps_2 s_n ,r_n +\eps_2  s_n )) -  (-1)^\frac{q_1-3}{2} [(r_n-\eps_2 s_n)^2-1]  + \cr
& \qquad \qquad \qquad \qquad + \sum_{k=1}^{(q_1-3)/2} (-1)^k   [(r_n(q_1-2k) -\eps_2 s_n)^2-1]  \cr  
\end{align*}
The above alternating sum is easily seen to evaluate to 
\begin{align*} 
\sum_{k=1}^{(q_1-3)/2} (-1)^k  & [(r_n(q_1-2k) -\eps_2 s_n)^2-1]  = \cr
& = \left\{
\begin{array}{ll}
 -\frac{1}{2}r_n^2(q_1^2-2q_1+3) + \eps_2 r_ns_n(q_1+1) +(1-s_n^2) & \quad ; \quad q_1\equiv 1\,(4), \cr  & \cr
-\frac{1}{2}r_n^2(q_1^2-2q_1-3) + \eps_2 r_ns_n(q_1-3)  & \quad ; \quad q_1\equiv 3\,(4),
\end{array}
\right.
\end{align*}
implying that when $\eps_1 = 1$, we have
\begin{align} \label{EquationIntermediateSignatureFormulaForP0EqualToOneCase1} 
&\sigma(T( q_n-2r_n,q_n))  = \cr
& = \left\{
\begin{array}{ll}
-\sigma(T(r_n+s_n, r_n-s_n)) -\frac{1}{2}r_n^2(q_1^2-2q_1+1) +\eps_2 r_ns_n(q_1-1)  & ; q_1\equiv 1\,(4), \cr & \cr
\phantom{-}\sigma(T(r_n+s_n, r_n-s_n))  -\frac{1}{2}r_n^2(q_1^2-2q_1-1) +\eps_2 r_ns_n(q_1-1) + (1-s_n^2) & ; q_1\equiv 3\,(4).
\end{array}
\right.
\end{align}
Next we turn to the case of $\eps_1=-1$. 
\begin{align*}
\sigma (T&(q_n+2r_n,q_n))  = -\sigma (T(q_n-2r_n,q_n)) -(q_n^2-1) \cr
&= -\sigma (T(r_n(q_1 -2) + \eps_2s_n ,r_nq_1+\eps_2s_n )) -(q_n^2-1) \cr
& =\sigma (T(r_n(q_1 -4) + \eps_2s_n ,r_n(q_1-2) +\eps_2s_n )) + [(r_n(q_1-2)+\eps_2s_n)^2-1]  -(q_n^2-1) \cr
& =-\sigma (T(r_n(q_1 -6) + \eps_2s_n ,r_n(q_1-4) +\eps_2s_n )) + \sum_{k=1}^2 (-1)^{k-1} [(r_n(q_1-2k)+\eps_2s_n)^2-1]   \cr
& \qquad \qquad \qquad  \qquad \qquad \qquad -(q_n^2-1) \cr
& \qquad \qquad \vdots \cr
& =(-1)^{\frac{q_1-1}{2}}\sigma (T(r_n + \eps_2s_n ,3r_n +\eps_2s_n )) + \sum_{k=1}^{(q_1-3)/2} (-1)^{k-1} [(r_n(q_1-2k)+\eps_2s_n)^2-1]  \cr
& \qquad \qquad \qquad  \qquad \qquad \qquad -(q_n^2-1) \cr
& =(-1)^{\frac{q_1-1}{2}}\sigma (T(r_n + \eps_2s_n ,r_n -\eps_2s_n )) -(-1)^{\frac{q_1-1}{2}}[(r_n+\eps_2s_n)^2-1] + \cr 
& \qquad \qquad \qquad  \qquad \qquad \qquad + \sum_{k=1}^{(q_1-3)/2} (-1)^{k-1} [(r_n(q_1-2k)+\eps_2s_n)^2-1]  -(q_n^2-1).
\end{align*}
The above alternating sum is again easily evaluated, and becomes 
\begin{align*} 
\sum_{k=1}^{(q_1-3)/2} (-1)^{k-1} &  [(r_n(q_1-2k) +\eps_2 s_n)^2-1]  = \cr
& = \left\{
\begin{array}{ll}
 \frac{1}{2}r_n^2(q_1^2-2q_1+3) + \eps_2 r_ns_n(q_1+1) -(1-s_n^2) & \quad ; \quad q_1\equiv 1\,(4), \cr  & \cr
\frac{1}{2}r_n^2(q_1^2-2q_1-3) + \eps_2 r_ns_n(q_1-3)  & \quad ; \quad q_1\equiv 3\,(4),
\end{array}
\right.
\end{align*}
leading to 
\begin{align}  \label{EquationIntermediateSignatureFormulaForP0EqualToOneCase2} 
&\sigma(T( q_n+2r_n,q_n))  = \cr
& = \left\{
\begin{array}{ll}
\phantom{-}\sigma(T(r_n+s_n, r_n-s_n)) -\frac{1}{2}r_n^2(q_1^2+2q_1-1) -\eps_2 r_ns_n(q_1+1)+(1-s_n^2)  & ; q_1\equiv 1\,(4), \cr & \cr
-\sigma(T(r_n+s_n, r_n-s_n))  -\frac{1}{2}r_n^2(q_1^2+2q_1+1) -\eps_2 r_ns_n(q_1+1)  & ; q_1\equiv 3\,(4).
\end{array}
\right.
\end{align}

Next we combine formulas \eqref{EquationIntermediateSignatureFormulaForP0EqualToOneCase1} and \eqref{EquationIntermediateSignatureFormulaForP0EqualToOneCase2} with the result on upsilon from Proposition \ref{PropositionPropertiesOfUpsilonInTheFirstStep} and Theorem \ref{TheoremWithTheUltimateUpsilonCalcuationsProved}. There are 4 cases to consider: $\eps_1=\pm 1$ and $q_1\equiv 1,3 \, (\text{mod }4)$. As before, each of the 4 cases demands the exact same pattern of substitution, canceling, and simplification. We do the first case, and leave the remaining cases to the reader.

If $\eps_1=1$ and $q_1\equiv 1\,(\text{mod }4)$ then 
\begin{align*}
\textstyle (\upsilon  -\frac{1}{2}\sigma) & (T(q_n-2r_n,q_n))  = \textstyle \upsilon (T(q_n-2r_n,q_n)) - \frac{1}{2}\sigma(T(q_n-2r_n,q_n)) \cr
& = \textstyle  \upsilon(T(r_n + s_n , r_n-s_n))-\frac{1}{4}r_n^2(q_1^2-2q_1-1) + \frac{1}{2}\eps_2 r_ns_n(q_1-1) +\frac{1}{2} (1-s_n^2)  \cr
& \qquad \textstyle +\frac{1}{2} \sigma(T(r_n+s_n, r_n-s_n)) +\frac{1}{4}r_n^2(q_1^2-2q_1+1) -\frac{1}{2} \eps_2 r_ns_n(q_1-1) \cr
& = \textstyle -(\upsilon - \frac{1}{2}\sigma)(T(r_n+s_n,r_n-s_n)) + 2\upsilon (T(r_n+s_n,r_n-s_n))+\frac{1}{2}r_n^2   +\frac{1}{2} (1-s_n^2)\cr
& = \textstyle -(\upsilon - \frac{1}{2}\sigma)(T(r_n+s_n,r_n-s_n)) + (n-1) +\frac{1}{2}(s_n^2-r_n^2+1) +\frac{1}{2}r_n^2   +\frac{1}{2} (1-s_n^2) \cr
& =  \textstyle -(\upsilon - \frac{1}{2}\sigma)(T(r_n+s_n,r_n-s_n)) + n.
\end{align*}
Continuing in this way, the proposition is proved. 
\end{proof}
\begin{proposition}  \label{PropositionSecondReductionStepForTheCaseOfP0BeingZero}
Let $p,q>1$ be relatively prime integers and assume that $T(p,q)$ becomes unknotted after $n$ pinch moves. Let $(p_n,q_n) = (p,q)$, let $p_0$, $\eps_2$ be defined as in Theorem \ref{TheoremOnReversingPinchingMoves}, and let $r_n$, $s_n$ be as in \eqref{EquationDefinitionOfRnAndSn} and let $\eps_1=-1$. 
%
Let $\eta=1$ if $q_1\equiv 1\,(\text{mod }4)$, and let $\eta =-1$ if $q_1\equiv 3\,(\text{mod }4)$, then
 \begin{align*}
\textstyle (\upsilon-\frac{1}{2}\sigma)(T(q_n, 2r_n)) & = \left\{
\begin{array}{ll}
\textstyle -(\upsilon -\frac{1}{2}\sigma)(T(r_n+\eta s_n,2s_n)) +(n-1) & ;  \eps_2=1, \cr & \cr
\textstyle \phantom{-}(\upsilon -\frac{1}{2}\sigma)(T(r_n-\eta s_n,2s_n))  & ; \eps_2=-1, \cr 
\end{array}
\right.
\end{align*}
Note that $(\upsilon-\frac{1}{2}\sigma)(T(q_n, 2r_n)) = (\upsilon-\frac{1}{2}\sigma)(T(2q_n+2r_n , q_n))$, the latter of which is as calculated in Proposition \ref{PropositionSecondReductionStepForTheCaseOfP0BeingEven}. \end{proposition}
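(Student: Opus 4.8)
The plan is to exploit the identity recorded in the statement, namely that $T(q_n,2r_n)$ and $T(2q_n+2r_n,q_n)$ agree under the combination $\upsilon-\frac12\sigma$, thereby reducing the whole computation to the already-established Proposition~\ref{PropositionSecondReductionStepForTheCaseOfP0BeingEven}. First I would note that $T(q_n,2r_n)$ is a genuine torus knot: since $p_n=p_0q_n-2\eps_1 r_n$ and $\gcd(p_n,q_n)=1$, any common divisor of $2r_n$ and $q_n$ also divides $p_n$, forcing $\gcd(2r_n,q_n)=1$. Moreover $r_n>0$ for $n\ge 1$ by Theorem~\ref{TheoremSomePropertiesOfRhoKN}, so $2r_n$ and $2q_n+2r_n$ are positive and $\gcd(2q_n+2r_n,q_n)=\gcd(2r_n,q_n)=1$, so both knots appearing are well defined.

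The heart of the argument is a pair of one-line recursions. For upsilon the needed relation has in fact already appeared in the proof of Proposition~\ref{PropositionPropertiesOfUpsilonInTheFirstStep}: applying the recursive formula \eqref{EquationRecursionFormulaForUpsilon} twice (reducing $2q_n+2r_n$ by $q_n$ each time, with $q_n$ odd) gives $\upsilon(T(2q_n+2r_n,q_n))=\upsilon(T(q_n,2r_n))-\frac12(q_n^2-1)$. For the signature I would write $T(2q_n+2r_n,q_n)=T(q_n,2q_n+2r_n)$ and apply the recursive signature formula \eqref{EquationRecursiveFormulaForSignature} with $a=q_n$ and $b=2q_n+2r_n$; since $r_n>0$ we have $2a<b$, and since $q_n$ is odd the first case of \eqref{EquationRecursiveFormulaForSignature} applies, yielding $\sigma(T(q_n,2q_n+2r_n))=\sigma(T(2r_n,q_n))-(q_n^2-1)$. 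Using the symmetry $T(2r_n,q_n)=T(q_n,2r_n)$, this reads $\sigma(T(2q_n+2r_n,q_n))=\sigma(T(q_n,2r_n))-(q_n^2-1)$.

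Subtracting one half of the signature identity from the upsilon identity makes the two $(q_n^2-1)$ contributions cancel exactly, establishing the claimed equality $(\upsilon-\frac12\sigma)(T(q_n,2r_n))=(\upsilon-\frac12\sigma)(T(2q_n+2r_n,q_n))$ asserted in the final line of the proposition. Finally, since $\eps_1=-1$ throughout this proposition, the right-hand side is precisely the quantity computed in the last two cases (those with $\eps_1=-1$) of Proposition~\ref{PropositionSecondReductionStepForTheCaseOfP0BeingEven}; its two outputs, $-(\upsilon-\frac12\sigma)(T(r_n+\eta s_n,2s_n))+(n-1)$ when $\eps_2=1$ and $(\upsilon-\frac12\sigma)(T(r_n-\eta s_n,2s_n))$ when $\eps_2=-1$, are exactly the two cases claimed here.

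I do not anticipate any genuine obstacle: the entire content is the reduction to Proposition~\ref{PropositionSecondReductionStepForTheCaseOfP0BeingEven} via the two recursions, which the note in the statement essentially dictates. The only points demanding care are verifying the hypotheses ($2a<b$, $q_n$ odd, and the coprimality of the entries) that license the upsilon and signature recursions, and keeping track of the knot symmetry $T(a,b)=T(b,a)$ so that the signature reduction terminates at $T(q_n,2r_n)$ rather than at its reverse.
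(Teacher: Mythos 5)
Your proposal is correct and takes essentially the same route as the paper: both establish $(\upsilon-\frac{1}{2}\sigma)(T(q_n,2r_n)) = (\upsilon-\frac{1}{2}\sigma)(T(2q_n+2r_n,q_n))$ by observing that the $\upsilon$-recursion and the $\sigma$-recursion produce the same correction term $[q_n^2-1]$ (up to the factor $\frac{1}{2}$), which cancels in the combination, and then quote the $\eps_1=-1$ cases of Proposition \ref{PropositionSecondReductionStepForTheCaseOfP0BeingEven}. The only cosmetic difference is that the paper cites these two recursion identities from the earlier proofs of Propositions \ref{PropositionPropertiesOfUpsilonInTheFirstStep} and \ref{PropositionSecondReductionStepForTheCaseOfP0BeingEven}, whereas you re-derive them directly (and also verify the coprimality and size hypotheses, which the paper leaves implicit).
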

\begin{proof}
Observe that in the proof of Proposition~\ref{PropositionSecondReductionStepForTheCaseOfP0BeingEven} in the case that $\eps_1 = -1$, we come upon the following equation, regardless of the values $\eps_2$ or $q_1$:

$$\sigma (T(2q_n+2r_n,q_n)) = \textstyle \sigma (T(q_n,2r_n)) - [q_n^2-1] $$
Therefore, we know the values of $\sigma$ for these torus knots $T(q_n,2r_n)$ are given by:
$$ \sigma (T(q_n,2r_n))  = \sigma (T(2q_n+2r_n,q_n)) - [q_n^2-1] $$
Moreover, in the proof of Proposition~\ref{PropositionPropertiesOfUpsilonInTheFirstStep}, we noted that the values of $\upsilon$ for torus knots $T(q_n,2r_n)$ are  given by:
$$ \upsilon (T(q_n,2r_n))  = \upsilon (T(2q_n+2r_n,q_n)) - \frac{1}{2} [q_n^2-1] $$
Hence, we have the desired result:
$$(\upsilon - \textstyle \frac{1}{2}\sigma)(T(q_n,2r_n)) = (\upsilon - \frac{1}{2}\sigma)(T(2q_n+2r_n,q_n)).$$
\end{proof}

\begin{proposition}  \label{PropositionThirdReductionStepForTheCaseOfP0BeingEven}
Let $p,q>1$ be relatively prime integers and assume that $T(p,q)$ becomes unknotted after $n$ pinch moves. Let $\rho_{k,n}$ be defined in terms of $p,q$ as in  \eqref{DefinitionOfRhoKCommaN}. For $k\in \mathbb N$, set $\eps=\eps_{k+1}\eps_{k+2}$. Then  
\begin{align*}
 (\upsilon & -\textstyle \frac{1}{2}\sigma)(T(\rho_{k,n} \pm \rho_{k+1,n},2\rho_{k+1,n}))  = \cr 
& = \left\{
\begin{array}{ll}
\textstyle \phantom{-}(\upsilon -\frac{1}{2}\sigma)(T(\rho_{k+1,n} \mp  \rho_{k+2,n}, 2\rho_{k+2,n}))   & \quad ; \quad  m_k\equiv 0\,(4), \eps=1 \cr  & \cr
\textstyle \phantom{-}(\upsilon -\frac{1}{2}\sigma)(T(\rho_{k+1,n}\pm  \rho_{k+2,n}, 2\rho_{k+2,n}))   & \quad ; \quad  m_k\equiv 2\,(4), \eps=1, \cr & \cr
\textstyle - (\upsilon -\frac{1}{2}\sigma) (T(\rho_{k+1,n} \pm   \rho_{k+2,n}, 2\rho_{k+2,n})) + (n-k-1)   & \quad ; \quad  m_k\equiv 0\,(4), \eps=-1 \cr  & \cr
\textstyle - (\upsilon -\frac{1}{2}\sigma) (T(\rho_{k+1,n} \mp   \rho_{k+2,n}, 2\rho_{k+2,n})) + (n-k-1)   & \quad ; \quad  m_k\equiv 2\,(4), \eps=-1.
\end{array}
\right.
\end{align*}
\end{proposition}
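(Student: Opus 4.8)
The plan is to reproduce, at the $(k,k+1,k+2)$ level, the same two-part template used in Propositions~\ref{PropositionSecondReductionStepForTheCaseOfP0BeingEven} and~\ref{PropositionSecondReductionStepForTheCaseOfP0BeingOdd}: first extract a pure signature recursion by iterating the recursive signature formula \eqref{EquationRecursiveFormulaForSignature}, and then fold in the closed-form values of $\upsilon$ supplied by Proposition~\ref{PropositionComputationForGeneralRhosWithAnySign}. Throughout I will use the derived relation $\rho_{k,n}=m_k\rho_{k+1,n}-\eps\,\rho_{k+2,n}$ with $\eps=\eps_{k+1}\eps_{k+2}$ (the identity invoked at the start of the proof of Proposition~\ref{PropositionAuxRecursiveRelationsForUpsilonForCombinationsOfRhos}), together with the structural facts recorded in Theorem~\ref{TheoremSomePropertiesOfRhoKN} and Remark~\ref{RemarkAboutTheParitiesAndRelativeSizesOfRhoKN}: namely $\rho_{k,n}>\rho_{k+1,n}>\rho_{k+2,n}>0$, the integer $2\rho_{k+1,n}$ is even, and $\rho_{k,n}\pm\rho_{k+1,n}$ is odd.

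For the signature step I set $a=2\rho_{k+1,n}$ (even) and $b=\rho_{k,n}\pm\rho_{k+1,n}=(m_k\pm 1)\rho_{k+1,n}-\eps\,\rho_{k+2,n}$, and run \eqref{EquationRecursiveFormulaForSignature}. While $2a<b$ the knot lies in the regime where no sign flip occurs and each step subtracts $2a=4\rho_{k+1,n}$ from $b$, lowering the coefficient of $\rho_{k+1,n}$ by $4$ and contributing a term $-a^2=-4\rho_{k+1,n}^2$; this is exactly the behaviour seen in the reductions inside Proposition~\ref{PropositionSecondReductionStepForTheCaseOfP0BeingEven}. Iteration continues until the coefficient of $\rho_{k+1,n}$ drops to its terminal odd value, whereupon the $a\le b<2a$ clause of \eqref{EquationRecursiveFormulaForSignature} applies, flipping the sign and producing the boundary discrepancy coming from the ``$a^2-2$'' even-case term; a final re-expression then folds the result into the $(k+1)$-level knot $T(\rho_{k+1,n}\pm\rho_{k+2,n},2\rho_{k+2,n})$, just as the $\upsilon$-computation in Proposition~\ref{PropositionAuxRecursiveRelationsForUpsilonForCombinationsOfRhos} closes up. Because $m_k\pm1$ is odd and decreases by $4$ at each step, its residue modulo $4$ is preserved, so $m_k\bmod 4$ dictates the terminal coefficient and hence both the parity of the sign flips and which of $\rho_{k+1,n}+\rho_{k+2,n}$ or $\rho_{k+1,n}-\rho_{k+2,n}$ survives, while the surviving sign of the residual $-\eps\,\rho_{k+2,n}$ term is what introduces the dependence on $\eps$. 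Summing the arithmetic run of $-4\rho_{k+1,n}^2$ contributions and the single boundary correction yields, in each of the four $(m_k\bmod 4,\ \eps)$ cases, an identity $\sigma(\mathrm{LHS})=\pm\sigma(T(\rho_{k+1,n}\pm\rho_{k+2,n},2\rho_{k+2,n}))+C$ with $C$ an explicit quadratic in $\rho_{k+1,n},\rho_{k+2,n}$, the net sign being $+$ when $\eps=1$ and $-$ when $\eps=-1$.

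For the combination step I substitute the known values of $\upsilon$ on both sides from Proposition~\ref{PropositionComputationForGeneralRhosWithAnySign} (applied with general $k$ in place of the special cases $r_n=\rho_{1,n}$, $s_n=\rho_{2,n}$). Writing $(\upsilon-\tfrac{1}{2}\sigma)(\mathrm{LHS})=\upsilon(\mathrm{LHS})\mp\tfrac{1}{2}\sigma(\mathrm{RHS})-\tfrac{1}{2}C$ and replacing $\tfrac{1}{2}\sigma(\mathrm{RHS})=\upsilon(\mathrm{RHS})-(\upsilon-\tfrac{1}{2}\sigma)(\mathrm{RHS})$ produces the asserted coefficient $\pm1$ in front of $(\upsilon-\tfrac{1}{2}\sigma)(\mathrm{RHS})$; the leftover explicit term $\upsilon(\mathrm{LHS})\mp\upsilon(\mathrm{RHS})-\tfrac{1}{2}C$ must then collapse, after inserting the Proposition~\ref{PropositionComputationForGeneralRhosWithAnySign} formulas, to the claimed additive constant, namely $0$ when $\eps=1$ and $n-k-1$ when $\eps=-1$. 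This mass cancellation of the quadratic $\rho$-terms is precisely the analogue of the simplifications carried out at the ends of the proofs of Propositions~\ref{PropositionSecondReductionStepForTheCaseOfP0BeingEven} and~\ref{PropositionSecondReductionStepForTheCaseOfP0BeingOdd}.

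The main obstacle I expect is the signature bookkeeping in the first step: one must determine exactly how many bulk reductions occur before the terminal regime is reached in each residue class $m_k\equiv 0$ and $m_k\equiv 2\pmod 4$, and verify that the parity of the boundary sign flips, combined with the sign of the surviving $\eps\,\rho_{k+2,n}$ term, produces precisely the four sign-and-coordinate patterns asserted, and moreover that the accumulated $-4\rho_{k+1,n}^2$ terms plus the boundary correction assemble into a quadratic $C$ that cancels against the $\upsilon$-differences. This is routine but delicate; I would organize it by treating $m_k\equiv0$ and $m_k\equiv2\pmod 4$ as the two master cases, handling $\eps=\pm1$ and the outer sign uniformly with sign parameters as in the earlier propositions, working one representative case in full detail and leaving the symmetric remainder to the reader, exactly as done elsewhere in the paper.
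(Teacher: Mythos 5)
Your proposal is correct and follows essentially the same route as the paper's proof: iterate the signature recursion \eqref{EquationRecursiveFormulaForSignature} with the even entry $2\rho_{k+1,n}$ playing the role of $a$, obtaining in each of the eight $(m_k\bmod 4,\ \eps,\ \pm)$ cases a relation $\sigma(\mathrm{LHS})=\pm\sigma(T(\rho_{k+1,n}\pm\rho_{k+2,n},2\rho_{k+2,n}))+C$ whose net sign is $+$ exactly when $\eps=1$ (zero or two boundary flips) and $-$ exactly when $\eps=-1$ (one flip), and then cancel $C$ against the $\upsilon$-formulas of Propositions \ref{PropositionAuxRecursiveRelationsForUpsilonForCombinationsOfRhos} and \ref{PropositionComputationForGeneralRhosWithAnySign} to produce the additive constants $0$ and $n-k-1$. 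The only (immaterial) difference is that you substitute the closed forms of Proposition \ref{PropositionComputationForGeneralRhosWithAnySign} on both sides directly, whereas the paper first applies the recursive relation of Proposition \ref{PropositionAuxRecursiveRelationsForUpsilonForCombinationsOfRhos} and invokes the closed form only to evaluate the residual $2\upsilon$-term in the $\eps=-1$ cases.
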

\begin{proof}
The signature computations break into several distinct cases. The first 4 computations regard $\sigma(T(\rho_{k,n}-\rho_{k+1,n}, 2\rho_{k+1,n}))$, the next 4 consider $\sigma(T(\rho_{k,n}+\rho_{k+1,n}, 2\rho_{k+1,n}))$. Each step has 4 substeps the distinguish the sign of $\eps$ and the congruence class of $m_k$ modulo 4. 

We start by assuming that  $\eps = 1$ and $m_k\equiv 0\, (\text{mod }4)$, then    
\begin{align*}  
\sigma (T(\rho_{k,n} & -\rho_{k+1,n},2\rho_{k+1,n}))   = \textstyle \sigma (T(\rho_{k+1,n} (m_k -1) -\rho_{k+2,n} ,2\rho_{k+1,n})) \cr
& =  \textstyle \sigma (T(\rho_{k+1,n}(m_k-5) - \rho_{k+2,n} , 2\rho_{k+1,n})) - 4\rho_{k+1,n}^2  \cr
& \qquad \qquad \vdots \cr
& =   \textstyle \sigma (T(3\rho_{k+1,n}-\rho_{k+2,n},2\rho_{k+1,n})) - (m_k-4) \rho_{k+1,n}^2  \cr
& =   \textstyle -\sigma (T(\rho_{k+1,n}+\rho_{k+2,n},2\rho_{k+1,n})) -(4\rho_{k+1,n}^2-2) - (m_k-4) \rho_{k+1,n}^2  \cr
& =   \textstyle \sigma (T(\rho_{k+1,n}+\rho_{k+2,n},2\rho_{k+2,n})) + [(\rho_{k+1,n}+\rho_{k+2,n})^2-1] -(4\rho_{k+1,n}^2-2)\cr
& \qquad \qquad  - (m_k-4) \rho_{k+1,n}^2  \cr
%
%
& =  \textstyle \sigma (T(\rho_{k+1,n}+\rho_{k+2,n},2\rho_{k+2,n})) -[\rho_{k,n}\rho_{k+1,n} -\rho_{k+1,n}^2-\rho_{k+1,n}\rho_{k+2,n} -\rho_{k+2,n}^2-1].
\end{align*}
If $\eps=-1$ and $m_k\equiv 0\, (\text{mod }4)$, the calculation differs from the previous one only in the last 3 rows, and we only indicate those differences. 
\begin{align*} 
\sigma (&T(\rho_{k,n}  -\rho_{k+1,n},2\rho_{k+1,n}))   = \textstyle \sigma (T(\rho_{k+1,n} (m_k -1) +\rho_{k+2,n} ,2\rho_{k+1,n})) \cr
& =   \textstyle \sigma (T(3\rho_{k+1,n}+\rho_{k+2,n},2\rho_{k+1,n})) - (m_k-4) \rho_{k+1,n}^2  \cr
& =   \textstyle -\sigma (T(\rho_{k+1,n}-\rho_{k+2,n},2\rho_{k+1,n})) -(4\rho_{k+1,n}^2-2) - (m_k-4) \rho_{k+1,n}^2  \cr
& =\textstyle -\sigma (T(\rho_{k+1,n}-\rho_{k+2,n},2\rho_{k+2,n})) + [(\rho_{k+1,n}-\rho_{k+2,n})^2-1] - (4\rho_{k+1,n}^2-2)\cr
& \qquad \qquad  - (m_k-4) \rho_{k+1,n}^2  \cr
& = \textstyle -\sigma (T(\rho_{k+1,n}-\rho_{k+2,n},2\rho_{k+2,n})) -[\rho_{k,n}\rho_{k+1,n} -\rho_{k+1,n}^2+\rho_{k+1,n}\rho_{k+2,n} -\rho_{k+2,n}^2-1].
\end{align*}
If $\eps=1$ and $m_k\equiv 2\, (\text{mod }4)$ then 
\begin{align*} 
\sigma (T(\rho_{k,n} & -\rho_{k+1,n},2\rho_{k+1,n}))   = \textstyle \sigma (T(\rho_{k+1,n} (m_k -1) -\rho_{k+2,n} ,2\rho_{k+1,n})) \cr
& =  \textstyle \sigma (T(\rho_{k+1,n}(m_k-5) - \rho_{k+2,n} , 2\rho_{k+1,n})) - 4\rho_{k+1,n}^2  \cr
& \qquad \qquad \vdots \cr
& =   \textstyle \sigma (T(\rho_{k+1,n}-\rho_{k+2,n},2\rho_{k+1,n})) - (m_k-2) \rho_{k+1,n}^2  \cr
& =    \textstyle \sigma (T(\rho_{k+1,n}-\rho_{k+2,n},2\rho_{k+2,n})) -[(\rho_{k+1,n}-\rho_{k+2,n})^2-1]  - (m_k-2) \rho_{k+1,n}^2  \cr
& =    \textstyle \sigma (T(\rho_{k+1,n}-\rho_{k+2,n},2\rho_{k+2,n})) -[\rho_{k,n}\rho_{k+1,n} -\rho_{k+1,n}^2-\rho_{k+1,n}\rho_{k+2,n} +\rho_{k+2,n}^2-1].
\end{align*}
If $\eps=-1$ and $m_k\equiv 2\, (\text{mod }4)$, the calculation differs from the previous one in the last 3 lines only.
\begin{align*} 
\sigma (T(\rho_{k,n} & -\rho_{k+1,n},2\rho_{k+1,n}))   = \textstyle \sigma (T(\rho_{k+1,n} (m_k -1) +\rho_{k+2,n} ,2\rho_{k+1,n})) \cr
& =   \textstyle \sigma (T(\rho_{k+1,n}+\rho_{k+2,n},2\rho_{k+1,n})) - (m_k-2) \rho_{k+1,n}^2  \cr
& =   -\textstyle \sigma (T(\rho_{k+1,n}+\rho_{k+2,n},2\rho_{k+2,n})) -[(\rho_{k+1,n}+\rho_{k+2,n})^2-1] - (m_k-2) \rho_{k+1,n}^2  \cr
& =   -\textstyle \sigma (T(\rho_{k+1,n}+\rho_{k+2,n},2\rho_{k+2,n})) -[\rho_{k,n}\rho_{k+1,n} -\rho_{k+1,n}^2+\rho_{k+1,n}\rho_{k+2,n} +\rho_{k+2,n}^2-1].
\end{align*}

We now turn to computations of $\sigma(T(\rho_{k,n}+\rho_{k+1,n},2\rho_{k+1,n}))$, starting with the case of $\eps=1$ and $m_k\equiv 0\, (\text{mod }4)$.  
\begin{align*} 
\sigma (T(\rho_{k,n} & +\rho_{k+1,n},2\rho_{k+1,n}))   = \textstyle \sigma (T(\rho_{k+1,n} (m_k +1) -\rho_{k+2,n} ,2\rho_{k+1,n})) \cr
& =  \textstyle \sigma (T(\rho_{k+1,n}(m_k-3) - \rho_{k+2,n} , 2\rho_{k+1,n})) - 4\rho_{k+1,n}^2  \cr
& \qquad \qquad \vdots \cr
& = \textstyle \sigma (T(\rho_{k+1,n}-\rho_{k+2,n},2\rho_{k+1,n})) - m_k\rho_{k+1,n}^2  \cr
& =  \textstyle \sigma (T(\rho_{k+1,n}-\rho_{k+2,n},2\rho_{k+2,n})) -[(\rho_{k+1,n}-\rho_{k+2,n})^2-1] - m_k\rho_{k+1,n}^2  \cr
& =  \textstyle \sigma (T(\rho_{k+1,n}-\rho_{k+2,n},2\rho_{k+2,n})) -[\rho_{k,n}\rho_{k+1,n} +\rho_{k+1,n}^2-\rho_{k+1,n}\rho_{k+2,n} +\rho_{k+2,n}^2-1].
\end{align*}
The case of $\eps=-1$ and $m_k\equiv 0\, (\text{mod }4)$ differs from the previous case only in the last 3 lines.  
\begin{align*} 
\sigma (T(\rho_{k,n} & +\rho_{k+1,n},2\rho_{k+1,n}))   = \textstyle \sigma (T(\rho_{k+1,n} (m_k +1) +\rho_{k+2,n} ,2\rho_{k+1,n})) \cr
& = \textstyle \sigma (T(\rho_{k+1,n}+\rho_{k+2,n},2\rho_{k+1,n})) - m_k\rho_{k+1,n}^2  \cr
& =  -\textstyle \sigma (T(\rho_{k+1,n}+\rho_{k+2,n},2\rho_{k+2,n})) -[(\rho_{k+1,n}+\rho_{k+2,n})^2-1] - m_k\rho_{k+1,n}^2  \cr
& =  \textstyle -\sigma (T(\rho_{k+1,n}+\rho_{k+2,n},2\rho_{k+2,n})) -[\rho_{k,n}\rho_{k+1,n} +\rho_{k+1,n}^2+\rho_{k+1,n}\rho_{k+2,n} +\rho_{k+2,n}^2-1].
\end{align*}
Next up consider the case of $\eps=1$ and $m_k\equiv 2\, (\text{mod }4)$.  
\begin{align*} 
\sigma (T(\rho_{k,n} & +\rho_{k+1,n},2\rho_{k+1,n}))   = \textstyle \sigma (T(\rho_{k+1,n} (m_k +1) -\rho_{k+2,n} ,2\rho_{k+1,n})) \cr
& =  \textstyle \sigma (T(\rho_{k+1,n}(m_k-3) - \rho_{k+2,n} , 2\rho_{k+1,n})) - 4\rho_{k+1,n}^2  \cr
& \qquad \qquad \vdots \cr
& = \textstyle \sigma (T(3\rho_{k+1,n}-\rho_{k+2,n},2\rho_{k+1,n})) - (m_k-2)\rho_{k+1,n}^2  \cr
& =  \textstyle -\sigma (T(\rho_{k+1,n}+\rho_{k+2,n},2\rho_{k+1,n})) -(4\rho_{k+1,n}^2-2)- (m_k-2)\rho_{k+1,n}^2  \cr
& =  \textstyle \sigma (T(\rho_{k+1,n}+\rho_{k+2,n},2\rho_{k+2,n})) +[(\rho_{k+1,n}+\rho_{k+2,n})^2-1]  -(4\rho_{k+1,n}^2-2) \cr
& \qquad \qquad - (m_k-2)\rho_{k+1,n}^2  \cr
& =  \textstyle \sigma (T(\rho_{k+1,n}+\rho_{k+2,n},2\rho_{k+2,n})) -[\rho_{k,n}\rho_{k+1,n} +\rho_{k+1,n}^2-\rho_{k+1,n}\rho_{k+2,n} -\rho_{k+2,n}^2-1].
\end{align*}
The last case, that of  $\eps=-1$ and $m_k\equiv 2\, (\text{mod }4)$, differs from the previous case in the last 3 lines only. 
\begin{align*} 
\sigma (T(\rho_{k,n} & +\rho_{k+1,n},2\rho_{k+1,n}))   = \textstyle \sigma (T(\rho_{k+1,n} (m_k +1) +\rho_{k+2,n} ,2\rho_{k+1,n})) \cr
& = \textstyle \sigma (T(3\rho_{k+1,n}+\rho_{k+2,n},2\rho_{k+1,n})) - (m_k-2)\rho_{k+1,n}^2  \cr
& =  \textstyle -\sigma (T(\rho_{k+1,n}-\rho_{k+2,n},2\rho_{k+1,n})) -(4\rho_{k+1,n}^2-2)- (m_k-2)\rho_{k+1,n}^2  \cr
& = \textstyle -\sigma (T(\rho_{k+1,n}-\rho_{k+2,n},2\rho_{k+2,n})) + [(\rho_{k+1,n}-\rho_{k+2,n})^2-1]  -(4\rho_{k+1,n}^2-2) \cr
& \qquad \qquad - (m_k-2)\rho_{k+1,n}^2  \cr
& =  \textstyle-\sigma (T(\rho_{k+1,n}-\rho_{k+2,n},2\rho_{k+2,n})) -[\rho_{k,n}\rho_{k+1,n} +\rho_{k+1,n}^2+\rho_{k+1,n}\rho_{k+2,n} -\rho_{k+2,n}^2-1].
\end{align*}

Having found formulas for $\sigma(T(\rho_{k,n}\pm \rho_{k+1,n}, 2\rho_{k+1,n}))$, we now can use similar formulas for upsilon from Propositions \ref{PropositionAuxRecursiveRelationsForUpsilonForCombinationsOfRhos} and  \ref{PropositionComputationForGeneralRhosWithAnySign}, to prove the validity of the expressions for $(\upsilon-\frac{1}{2}\sigma)(T(\rho_{k,n}\pm \rho_{k+1,n}, 2\rho_{k+1,n}))$ claimed in the proposition. As in the first half of the proof, this second half also has 8 subcases to consider. Each case demands the exact same pattern of substitution, cancelation, and simplification. We do the first two cases, which show all the needed techniques, and leave the remaining cases to the reader.

If $\eps=1$ and $m_k\equiv 0\,(\text{mod }4)$, then 
\begin{align*}
(\upsilon & -\textstyle \frac{1}{2}\sigma)(T(\rho_{k,n}-\rho_{k+1,n},2\rho_{k+1,n})) = \cr
& = \upsilon(T(\rho_{k,n}-\rho_{k+1,n},2\rho_{k+1,n}))  -\textstyle \frac{1}{2}\sigma(T(\rho_{k,n}-\rho_{k+1,n},2\rho_{k+1,n})) \cr
& = \textstyle \upsilon(T(\rho_{k+1,n}+\rho_{k+2,n},2\rho_{k+2,n})) -\textstyle \frac{1}{2}[ \rho_{k,n}\rho_{k+1,n} - \rho_{k+1,n}^2-\rho_{k+1,n}\rho_{k+2,n} - \rho_{k+2,n}^2-1]  \cr
& \qquad  \textstyle  -\frac{1}{2} \sigma (T(\rho_{k+1,n}+\rho_{k+2,n},2\rho_{k+2,n})) +\frac{1}{2} [\rho_{k,n}\rho_{k+1,n} -\rho_{k+1,n}^2-\rho_{k+1,n}\rho_{k+2,n} -\rho_{k+2,n}^2-1] \cr
& = \textstyle (\upsilon -\frac{1}{2}\sigma)(T(\rho_{k+1,n}+\rho_{k+2,n},2\rho_{k+2,n})).
\end{align*}
If $\eps=-1$ and $m_k\equiv 0\,(\text{mod }4)$, then 
\begin{align*}
(\upsilon & -\textstyle \frac{1}{2}\sigma)(T(\rho_{k,n}-\rho_{k+1,n},2\rho_{k+1,n})) = \cr
& = \upsilon(T(\rho_{k,n}-\rho_{k+1,n},2\rho_{k+1,n}))  -\textstyle \frac{1}{2}\sigma(T(\rho_{k,n}-\rho_{k+1,n},2\rho_{k+1,n})) \cr
& = \textstyle \upsilon(T(\rho_{k+1,n}-\rho_{k+2,n},2\rho_{k+2,n})) -\textstyle \frac{1}{2}[ \rho_{k,n}\rho_{k+1,n} - \rho_{k+1,n}^2-\rho_{k+1,n}\rho_{k+2,n} + \rho_{k+2,n}^2-1]  \cr
& \qquad  \textstyle  +\frac{1}{2} \sigma (T(\rho_{k+1,n}-\rho_{k+2,n},2\rho_{k+2,n})) +\frac{1}{2} [\rho_{k,n}\rho_{k+1,n} -\rho_{k+1,n}^2+\rho_{k+1,n}\rho_{k+2,n} -\rho_{k+2,n}^2-1]  \cr
& = \textstyle -(\upsilon -\frac{1}{2}\sigma)(T(\rho_{k+1,n}-\rho_{k+2,n},2\rho_{k+2,n})) + 2\upsilon(T(\rho_{k+1,n}-\rho_{k+2,n},2\rho_{k+2,n})) \cr
&\qquad \qquad   + \rho_{k+1,n}\rho_{k+2,n} - \rho_{k+2,n}^2 \cr
& = \textstyle -(\upsilon -\frac{1}{2}\sigma)(T(\rho_{k+1,n}-\rho_{k+2,n},2\rho_{k+2,n})) + (n-k-1 +\rho_{k+2,n}^2-\rho_{k+1,n}\rho_{k+2,n} ) \cr
& \qquad \qquad  + \rho_{k+1,n}\rho_{k+2,n} - \rho_{k+2,n}^2 \cr 
& = \textstyle -(\upsilon -\frac{1}{2}\sigma)(T(\rho_{k+1,n}-\rho_{k+2,n},2\rho_{k+2,n})) +n-k-1.
\end{align*}
Continuing in this way, the proof is complete. 
\end{proof}
\begin{proposition} \label{PropositionThirdReductionStepForTheCaseOfP0BeingOdd}
Let $p,q>1$ be relatively prime integers and assume that $T(p,q)$ becomes unknotted after $n$ pinch moves. Let $\rho_{k,n}$ be defined in terms of $p,q$ as in  \eqref{DefinitionOfRhoKCommaN}. Then
\begin{align*}
\textstyle (\upsilon -\frac{1}{2}\sigma)(T&(\rho_{k,n}+\rho_{k+1,n} , \rho_{k,n}-\rho_{k+1,n}))  = \cr
& = \left\{
\begin{array}{ll}
 - (\upsilon -\frac{1}{2}\sigma)(T(\rho_{k+1,n}+\rho_{k+2,n} , \rho_{k+1,n}-\rho_{k+2,n}))+(n-k)  & ; \quad m_k\equiv 0\,(4), \cr & \cr
 \phantom{-} (\upsilon -\frac{1}{2}\sigma)(T(\rho_{k+1,n}+\rho_{k+2,n} , \rho_{k+1,n}-\rho_{k+2,n}))  & ; \quad m_k\equiv 2\,(4).
\end{array}
\right.
\end{align*}
\end{proposition}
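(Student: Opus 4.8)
The plan is to mirror the structure of the upsilon computation in Proposition~\ref{PropositionAuxRecursiveRelationsForUpsilonForCombinationsOfRhos}, but using the signature recursion \eqref{EquationRecursiveFormulaForSignature} in place of the upsilon recursion, and then to assemble the two into $\upsilon-\frac12\sigma$ with the help of the closed forms from Propositions~\ref{PropositionAuxRecursiveRelationsForUpsilonForCombinationsOfRhos} and \ref{PropositionComputationForGeneralRhosWithAnySign}. Throughout I write $\eps=\eps_{k+1}\eps_{k+2}$ and recall from \eqref{DefinitionOfRhoKCommaN} the relation $\rho_{k,n}=m_k\rho_{k+1,n}-\eps\,\rho_{k+2,n}$, so that $\rho_{k,n}\pm\rho_{k+1,n}=(m_k\pm1)\rho_{k+1,n}-\eps\,\rho_{k+2,n}$. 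By Remark~\ref{RemarkAboutTheParitiesAndRelativeSizesOfRhoKN} both entries are odd, and by Theorem~\ref{TheoremSomePropertiesOfRhoKN} one has $0<\rho_{k+2,n}<\rho_{k+1,n}$; these two facts drive the entire argument.

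First I would compute $\sigma(T(\rho_{k,n}+\rho_{k+1,n},\rho_{k,n}-\rho_{k+1,n}))$ by applying \eqref{EquationRecursiveFormulaForSignature} directly, always keeping the smaller (odd) entry as the argument ``$a$'' of the recursion. Writing the two entries of a generic stage as $(c+2)\rho_{k+1,n}-\eps\rho_{k+2,n}$ and $c\rho_{k+1,n}-\eps\rho_{k+2,n}$, the inequality $0<\rho_{k+2,n}<\rho_{k+1,n}$ shows that the relevant branch of \eqref{EquationRecursiveFormulaForSignature} is $a\le b<2a$ (with a sign flip) exactly when $c\ge3$, and is $2a<b$ (no sign flip) exactly when $c=1$. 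Hence the coefficient pair descends $(m_k+1,m_k-1)\to(m_k-1,m_k-3)\to\cdots\to(3,1)$, each step contributing a factor $-1$ together with a correction $-((c\rho_{k+1,n}-\eps\rho_{k+2,n})^2-1)$ for its bottom coefficient $c$, until the terminal step from $(3,1)$ produces $T(\rho_{k+1,n}+\eps\rho_{k+2,n},\rho_{k+1,n}-\eps\rho_{k+2,n})$ with no flip. Since $T(\rho_{k+1,n}+\eps\rho_{k+2,n},\rho_{k+1,n}-\eps\rho_{k+2,n})=T(\rho_{k+1,n}+\rho_{k+2,n},\rho_{k+1,n}-\rho_{k+2,n})$ irrespective of the sign $\eps$, the dependence on $\eps$ disappears at this stage, and this is precisely what collapses the four cases of the even analogue (Proposition~\ref{PropositionThirdReductionStepForTheCaseOfP0BeingEven}) to the two cases stated here. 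The number of sign-flipping steps is $m_k/2-1$, so the overall coefficient of $\sigma(T(\rho_{k+1,n}+\rho_{k+2,n},\rho_{k+1,n}-\rho_{k+2,n}))$ is $(-1)^{m_k/2-1}$, equal to $-1$ when $m_k\equiv0\,(4)$ and to $+1$ when $m_k\equiv2\,(4)$. The degenerate case $m_k=2$ starts already at the pair $(3,1)$ and yields a single non-flipping step, consistent with the $m_k\equiv2\,(4)$ branch.

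The correction terms accumulate into an alternating sum $\sum_{c}(-1)^{(m_k-1-c)/2+1}\big((c\rho_{k+1,n}-\eps\rho_{k+2,n})^2-1\big)$ over odd $c\in\{1,3,\dots,m_k-1\}$; being an alternating sum of squares of an arithmetic progression, it evaluates in closed form by the same elementary bookkeeping used for the alternating sums in Proposition~\ref{PropositionSecondReductionStepForTheCaseOfP0BeingOdd}. I would then form $\upsilon-\frac12\sigma$ and substitute the upsilon relation $\upsilon(T(\rho_{k,n}+\rho_{k+1,n},\rho_{k,n}-\rho_{k+1,n}))=\upsilon(T(\rho_{k+1,n}+\rho_{k+2,n},\rho_{k+1,n}-\rho_{k+2,n}))+\frac12+\frac14[(\rho_{k+1,n}^2-\rho_{k,n}^2)-(\rho_{k+2,n}^2-\rho_{k+1,n}^2)]$ from Proposition~\ref{PropositionAuxRecursiveRelationsForUpsilonForCombinationsOfRhos}. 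In the branch $m_k\equiv2\,(4)$ the evaluated correction sum cancels exactly against this upsilon difference, leaving $+(\upsilon-\frac12\sigma)(T(\rho_{k+1,n}+\rho_{k+2,n},\rho_{k+1,n}-\rho_{k+2,n}))$. In the branch $m_k\equiv0\,(4)$ the sign flip turns the combination into $-(\upsilon-\frac12\sigma)(\cdots)+2\upsilon(T(\rho_{k,n}+\rho_{k+1,n},\rho_{k,n}-\rho_{k+1,n}))+(\text{quadratic terms})$, and here the closed form $\upsilon(T(\rho_{k,n}+\rho_{k+1,n},\rho_{k,n}-\rho_{k+1,n}))=\frac12(n-k)+\frac14(\rho_{k+1,n}^2-\rho_{k,n}^2+1)$ from Proposition~\ref{PropositionComputationForGeneralRhosWithAnySign} supplies exactly the additive $n-k$ once the remaining quadratic pieces cancel.

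I expect the main obstacle to be the bookkeeping of the alternating correction sum and the verification that, after substitution, it cancels against the quadratic upsilon terms in precisely the right way---the same ``substitution, cancellation, and simplification'' that the paper defers in its sibling propositions. The two conceptual points that make the computation go through cleanly, and which I would state explicitly, are: (i) the case split in \eqref{EquationRecursiveFormulaForSignature} is governed entirely by whether the bottom coefficient is $\ge3$ or $=1$, a consequence of $0<\rho_{k+2,n}<\rho_{k+1,n}$; and (ii) the terminal knot $T(\rho_{k+1,n}+\eps\rho_{k+2,n},\rho_{k+1,n}-\eps\rho_{k+2,n})$ is independent of $\eps$, which is exactly why the signs $\eps_{k+1},\eps_{k+2}$ do not appear in the final statement.
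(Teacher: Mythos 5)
Your proposal is correct and follows essentially the same route as the paper's own proof: the same descent through coefficient pairs $(m_k+1,m_k-1)\to\cdots\to(3,1)$ via the signature recursion with accumulated sign $(-1)^{(m_k-2)/2}$, the same observation that the terminal knot $T(\rho_{k+1,n}+\eps\rho_{k+2,n},\rho_{k+1,n}-\eps\rho_{k+2,n})$ is independent of $\eps$, and the same final assembly using the $-(\upsilon-\frac{1}{2}\sigma)+2\upsilon$ trick together with the closed form of Proposition \ref{PropositionComputationForGeneralRhosWithAnySign} to produce the summand $n-k$ in the $m_k\equiv 0\,(4)$ branch. The only difference is that you defer the evaluation of the alternating correction sum, which the paper carries out explicitly, but your setup of that sum (including the $c=1$ term from the single non-flipping step) is the correct one.
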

\begin{proof}
Recall the relation $\rho_{k,n} = m_k\rho_{k+1,n}-\eps_{k+1}\eps_{k+2}\rho_{k+2,n}$. 
\begin{align*}
\sigma & (T(\rho_{k,n}+\rho_{k+1,n} , \rho_{k,n}-\rho_{k+1,n}))  = \cr
& =  \textstyle \sigma(T(\rho_{k+1,n}(m_k+1)-\eps_{k+1}\eps_{k+2}\rho_{k+2,n} ,\rho_{k+1,n}(m_k-1)-\eps_{k+1}\eps_{k+2}\rho_{k+2,n} ))  \cr
& = -\sigma(T(\rho_{k+1,n}(m_k-1)-\eps_{k+1}\eps_{k+2}\rho_{k+2,n} , \rho_{k+1,n}(m_k-3)-\eps_{k+1}\eps_{k+2}\rho_{k+2,n}))  \cr
&\qquad \qquad \qquad \qquad  \textstyle  -[(\rho_{k+1,n}(m_k-1)-\eps_{k+1}\eps_{k+2}\rho_{k+2,n})^2-1]  \cr
& =  \sigma(T(\rho_{k+1,n}(m_k-3)-\eps_{k+1}\eps_{k+2}\rho_{k+2,n} , \rho_{k+1,n}(m_k-5)-\eps_{k+1}\eps_{k+2}\rho_{k+2,n} ))   \cr
& \qquad \qquad \qquad \qquad  \textstyle+ \sum_{\ell=1}^2 (-1)^\ell  [(\rho_{k+1,n}(m_k-2\ell+1)-\eps_{k+1}\eps_{k+2}\rho_{k+2,n})^2-1]  \cr
& \qquad \qquad \vdots \cr
& = (-1)^\frac{m_k-2}{2}  \sigma(T(3\rho_{k+1,n}-\eps_{k+1}\eps_{k+2}\rho_{k+2,n} , \rho_{k+1,n}-\eps_{k+1}\eps_{k+2}\rho_{k+2,n} )) \cr
& \qquad \qquad \qquad \qquad \textstyle  + \sum_{\ell=1}^{(m_k-2)/2}  (-1)^\ell  [(\rho_{k+1,n}(m_k-2\ell+1)-\eps_{k+1}\eps_{k+2}\rho_{k+2,n})^2-1]  \cr
& = (-1)^\frac{m_k-2}{2}  \sigma(T(\rho_{k+1,n}+\eps_{k+1}\eps_{k+2}\rho_{k+2,n} , \rho_{k+1,n}-\eps_{k+1}\eps_{k+2}\rho_{k+2,n} ))  \cr
& \qquad \qquad \qquad \qquad  \textstyle -  (-1)^\frac{m_k-2}{2}  [(\rho_{k+1,n}-\eps_{k+1}\eps_{k+2}\rho_{k+2,n})^2-1]  \cr
& \qquad \qquad \qquad \qquad \textstyle + \sum_{\ell=1}^{(m_k-2)/2}  (-1)^\ell  [(\rho_{k+1,n}(m_k-2\ell+1)-\eps_{k+1}\eps_{k+2}\rho_{k+2,n})^2-1]  \cr
& = (-1)^\frac{m_k-2}{2}  \sigma(T(\rho_{k+1,n}+\rho_{k+2,n} , \rho_{k+1,n}-\rho_{k+2,n} )) \cr
& \qquad \qquad \qquad \qquad  \textstyle -  (-1)^\frac{m_k-2}{2}  [(\rho_{k+1,n}-\eps_{k+1}\eps_{k+2}\rho_{k+2,n})^2-1]  \cr
& \qquad \qquad \qquad \qquad \textstyle + \sum_{\ell=1}^{(m_k-2)/2}  (-1)^\ell  [(\rho_{k+1,n}(m_k-2\ell+1)-\eps_{k+1}\eps_{k+2}\rho_{k+2,n})^2-1].
\end{align*}
The alternating sum becomes 
\begin{align*} 
&\sum_{\ell=1}^{(m_k-2)/2}  (-1)^\ell   [(\rho_{k+1,n} (m_k-2\ell+1) -\eps_{k+1}\eps_{k+2} \rho_{k+2,n})^2-1] = \cr
& \quad = \left\{
\begin{array}{ll}
-\frac{1}{2}\rho_{k+1,n}^2(m_k^2+2) +\eps_{k+1}\eps_{k+2} \rho_{k+1,n} \rho_{k+2,n}(m_k+2) +(1-\rho_{k+2,n}^2) ,  & \quad m_k\equiv 0\,(4), \cr & \cr
-\frac{1}{2}\rho_{k+1,n}^2(m_k^2-4) +\eps_{k+1}\eps_{k+2}\rho_{k+1,n} \rho_{k+2,n}(m_k-2)  ,  & \quad m_k\equiv 2\,(4),
\end{array}
\right.
\end{align*}
leading to 
\begin{align*} 
&\sigma(T(\rho_{k,n}+  \rho_{k+1,n},\rho_{k,n}-\rho_{k+1,n}) = \cr
&\quad  = \left\{
\begin{array}{ll}
-\sigma (T(\rho_{k+1,n}+\rho_{k+2,n},\rho_{k+1,n}-\rho_{k+2,n})) -\frac{1}{2}(\rho_{k,n}^2-\rho_{k+2,n}^2)  &  \quad m_k\equiv 0\,(4), \cr & \cr  
\phantom{-} \sigma (T(\rho_{k+1,n}+\rho_{k+1,n},\rho_{k+1,n}-\rho_{k+1,n}))+ 1-\frac{1}{2}(\rho_{k,n}^2-2\rho_{k+1,n}^2+\rho_{k+2,n}^2)  &\quad m_k\equiv 2\,(4).
\end{array}
\right.
\end{align*}
Equipped with these signature calculations, we join them with the upsilon calculations from Propositions \ref{PropositionAuxRecursiveRelationsForUpsilonForCombinationsOfRhos} and  \ref{PropositionComputationForGeneralRhosWithAnySign}. If $m_k\equiv 0\,(\text{mod }4)$ then 
\begin{align*}
(\upsilon & -\textstyle\frac{1}{2}\sigma)(T(\rho_{k,n}+\rho_{k+1,n},\rho_{k,n}-\rho_{k+1,n})) = \cr
& = \upsilon(T(\rho_{k,n}+\rho_{k+1,n},\rho_{k,n}-\rho_{k+1,n})) - \textstyle \frac{1}{2}\sigma (T(\rho_{k,n}+\rho_{k+1,n},\rho_{k,n}-\rho_{k+1,n})) \cr
& = \textstyle  \upsilon(T(\rho_{k+1,n}+ \rho_{k+2,n} , \rho_{k+1,n}-\rho_{k+2,n} ))+ \frac{1}{2} +\frac{1}{4}[(\rho_{k+1,n}^2-\rho_{k,n}^2)-(\rho_{k+2,n}^2-\rho_{k+1,n}^2)]\cr
& \textstyle \qquad +\frac{1}{2} \sigma (T(\rho_{k+1,n}+\rho_{k+2,n},\rho_{k+1,n}-\rho_{k+2,n})) +\frac{1}{4}(\rho_{k,n}^2-\rho_{k+2,n}^2)\cr
& = \textstyle -(\upsilon-\frac{1}{2}\sigma)(T(\rho_{k+1,n}+\rho_{k+2,n},\rho_{k+1,n}-\rho_{k+2,n}))+2\upsilon(T(\rho_{k+1,n}+\rho_{k+2,n},\rho_{k+1,n}-\rho_{k+2,n})) \cr
& \textstyle \qquad \qquad +\frac{1}{2}(1+\rho_{k+1,n}^2 - \rho_{k+2,n}^2)\cr
& =    \textstyle -(\upsilon-\frac{1}{2}\sigma)(T(\rho_{k+1,n}+\rho_{k+2,n},\rho_{k+1,n}-\rho_{k+2,n}))+(n-k-1  +\frac{1}{2}(\rho_{k+2,n}^2-\rho_{k+1,n}^2+1))\cr
& \textstyle \qquad \qquad +\frac{1}{2}(1+\rho_{k+1,n}^2 - \rho_{k+2,n}^2)\cr
& =    \textstyle -(\upsilon-\frac{1}{2}\sigma)(T(\rho_{k+1,n}+\rho_{k+2,n},\rho_{k+1,n}-\rho_{k+2,n}))+n-k.
\end{align*}
 If $m_k\equiv 2\,(\text{mod }4)$ then 
\begin{align*}
(\upsilon & -\textstyle\frac{1}{2}\sigma)(T(\rho_{k,n}+\rho_{k+1,n},\rho_{k,n}-\rho_{k+1,n})) = \cr
& = \upsilon(T(\rho_{k,n}+\rho_{k+1,n},\rho_{k,n}-\rho_{k+1,n})) - \textstyle \frac{1}{2}\sigma (T(\rho_{k,n}+\rho_{k+1,n},\rho_{k,n}-\rho_{k+1,n})) \cr
& = \textstyle  \upsilon(T(\rho_{k+1,n}+ \rho_{k+2,n} , \rho_{k+1,n}-\rho_{k+2,n} ))+ \frac{1}{2} +\frac{1}{4}[(\rho_{k+1,n}^2-\rho_{k,n}^2)-(\rho_{k+2,n}^2-\rho_{k+1,n}^2)] \cr
& \textstyle \qquad -\frac{1}{2}\sigma (T(\rho_{k+1,n}+\rho_{k+1,n},\rho_{k+1,n}-\rho_{k+1,n}))-\frac{1}{2} +\frac{1}{4}(\rho_{k,n}^2-2\rho_{k+1,n}^2+\rho_{k+2,n}^2) \cr
& = \textstyle (\upsilon-\frac{1}{2}\sigma)(T(\rho_{k+1,n}+\rho_{k+2,n},\rho_{k+1,n}-\rho_{k+2,n})).
\end{align*}
This completes the proof. 
\end{proof}
\section{The Main Proofs} \label{SectionWithProofsOfMainTheorems}
\subsection{Proof of Theorem \ref{MainResultForP0Odd}} \label{ProofOfMainResultForP0Odd}
Assume that $p>1$ odd, then $p_0\ge 1$ also odd. Recall that we set $\eta =1$ if $q_1\equiv 1\,(\text{mod }4)$, and we set $\eta =-1$ if $q_1\equiv 3\,(\text{mod }4)$. By Proposition \ref{PropositionForInitialReductionOfUpsilonMinusHalfOfTheSignature} we know that 
\begin{equation} \label{EquationAuxiliaryFormulaForP0BeingOddNumber1}
\textstyle (\upsilon-\frac{1}{2}\sigma)(T(p,q)) = (\upsilon-\frac{1}{2}\sigma)(T(q_n-2\eps_1r_n,q_n)).
\end{equation}
By Proposition \ref{PropositionSecondReductionStepForTheCaseOfP0BeingOdd} we further know that 
\begin{align} \label{EquationAuxiliaryFormulaForP0BeingOddNumber2}
\textstyle (\upsilon -\frac{1}{2}\sigma)(T(q_n-2\eps_1 r_n,q_n)) & = \left\{
\begin{array}{lll}
\textstyle -(\upsilon -\frac{1}{2}\sigma)(T(r_n+s_n,r_n-s_n)) +n & \quad \eta \eps_1  = 1, \cr &&\cr
\textstyle \phantom{-} (\upsilon -\frac{1}{2}\sigma)(T(r_n+s_n,r_n-s_n)) & \quad \eta \eps_1 = -1.
\end{array}
\right.
\end{align}
\begin{remark}
The condition $\eta \eps_1=1$ is equivalent to the congruence $q_1-\eps_1\equiv 0\,(\text{mod }4)$, while the condition $\eta \eps_1=-1$ is equivalent to the congruence $q_1-\eps_1\equiv 2\,(\text{mod }4)$. We use these two congruences in the statement of Theorem \ref{MainResultForP0Odd} as do they not require the introduction of the extra variable $\eta$. However, we rely on $\eta$ in the present chapter to  allow us to state some results in simpler format, for example Propositions \ref{PropositionSecondReductionStepForTheCaseOfP0BeingEven} and \ref{PropositionSecondReductionStepForTheCaseOfP0BeingZero}.
\end{remark}
These suffice to prove Theorem \ref{MainResultForP0Odd} for $n=1$, in which case $r_n=r_1=1$ and $s_n=s_1=0$, leading to $T(r_n+s_n,r_n-s_n) = T(1,1)$ and therefore $(\upsilon -\frac{1}{2}\sigma)(T(r_n+s_n,r_n-s_n)) = 0$. From \eqref{EquationAuxiliaryFormulaForP0BeingOddNumber1} and \eqref{EquationAuxiliaryFormulaForP0BeingOddNumber2} it follows that 
$$\textstyle (\upsilon -\frac{1}{2}\sigma)(T(p,q))  = (\upsilon -\frac{1}{2}\sigma)(T(q_1-2\eps_1r_1,q_1)) =  \left\{
\begin{array}{lll}
1 & \quad ; \quad  \eta \eps_1  = 1, \cr &&\cr
0 & \quad ; \quad \eta \eps_1 = -1,
\end{array}
\right.$$
which is as claimed in Theorem \ref{MainResultForP0Odd}. 

If $n\ge 2$ we additionally invoke Proposition \ref{PropositionThirdReductionStepForTheCaseOfP0BeingOdd} by which 
\begin{align} \label{EquationAuxFormulaForStepThreeInTheProofOfTheCaseOfP0BeingOdd}
\textstyle (\upsilon -\frac{1}{2}&\sigma)(T(\rho_{k,n}+\rho_{k+1,n} , \rho_{k,n}-\rho_{k+1,n}))  = \cr
& = \left\{
\begin{array}{ll}
 - (\upsilon -\frac{1}{2}\sigma)(T(\rho_{k+1,n}+\rho_{k+2,n} , \rho_{k+1,n}-\rho_{k+2,n}))+(n-k)  & ; \quad m_k\equiv 0\,(4), \cr & \cr
 \phantom{-} (\upsilon -\frac{1}{2}\sigma)(T(\rho_{k+1,n}+\rho_{k+2,n} , \rho_{k+1,n}-\rho_{k+2,n}))  & ; \quad m_k\equiv 2\,(4).
\end{array}
\right.
\end{align}
We see that in passing from $(\upsilon -\frac{1}{2}\sigma)(T(\rho_{k,n}+\rho_{k+1,n} , \rho_{k,n}-\rho_{k+1,n}))$ to $(\upsilon -\frac{1}{2}\sigma)(T(\rho_{k+1,n}+\rho_{k+2,n} , \rho_{k+1,n}-\rho_{k+2,n}))$ using formula \eqref{EquationAuxFormulaForStepThreeInTheProofOfTheCaseOfP0BeingOdd}, we either get identical values (if $m_k\equiv 2\,(\text{mod }4)$) or the quantity changes sign and picks up the summand $n-k$ (if $m_k\equiv 0\,(\text{mod }4)$). This motivates the definition of the set
$$\mathcal I = \{k\in \{1,\dots, n-1\}\,|\, m_k\equiv 0\,(\text{mod }4)\},$$ 
and we write $\mathcal I=\{k_{1}, k_{2}, \dots, k_{\ell}\}$ with $k_{1}<k_{2}<\dots<k_{\ell}$. If $k\notin \mathcal I$ then 
$$\textstyle   (\upsilon -\frac{1}{2}\sigma)(T(\rho_{k,n}+\rho_{k+1,n} , \rho_{k,n}-\rho_{k+1,n})) = (\upsilon -\frac{1}{2}\sigma)(T(\rho_{k+1,n}+\rho_{k+2,n} , \rho_{k+1,n}-\rho_{k+2,n}))$$
while if $k\in \mathcal I$ then 
$$\textstyle   (\upsilon -\frac{1}{2}\sigma)(T(\rho_{k,n}+\rho_{k+1,n} , \rho_{k,n}-\rho_{k+1,n})) = -(\upsilon -\frac{1}{2}\sigma)(T(\rho_{k+1,n}+\rho_{k+2,n} , \rho_{k+1,n}-\rho_{k+2,n})) + (n-k).$$
From these it follows that 
\begin{align} \label{EquationAuxiliaryFormulaForP0BeingOddNumber3}
(\upsilon & -\textstyle\frac{1}{2}\sigma)(T((r_n +s_n , r_n-s_n))  = \textstyle (\upsilon -\frac{1}{2}\sigma)(T((\rho_{1,n}+\rho_{2,n} , \rho_{1,n}-\rho_{2,n}))\cr 
& = \left( \sum _{k_i\in \mathcal I} (-1)^{i-1} (n-k_i) \right) +   \textstyle (\upsilon -\frac{1}{2}\sigma)(T((\rho_{n,n}+\rho_{n+1,n} , \rho_{n,n}-\rho_{n+1,n})) \cr
& = \left( \sum _{k_i\in \mathcal I} (-1)^{i-1} (n-k_i) \right) +   \textstyle (\upsilon -\frac{1}{2}\sigma)(T(1,1) \cr
& =  \sum _{k_i\in \mathcal I} (-1)^{i-1} (n-k_i).
\end{align}
Note that if $\mathcal I=\emptyset$ then 
$$(\upsilon  -\textstyle\frac{1}{2}\sigma)(T((r_n +s_n , r_n-s_n))  = \textstyle (\upsilon -\frac{1}{2}\sigma)(T(1,1) = 0,$$
showing that \eqref{EquationAuxiliaryFormulaForP0BeingOddNumber3} remains valid in this case as well, with the convention that the sum $\sum _{k_i\in \mathcal I} (-1)^{i-1} (n-k_i)$ equals zero if $\mathcal I=\emptyset$.

Combining equations \eqref{EquationAuxiliaryFormulaForP0BeingOddNumber1}, \eqref{EquationAuxiliaryFormulaForP0BeingOddNumber2} and \eqref{EquationAuxiliaryFormulaForP0BeingOddNumber3} yields Theorem \ref{MainResultForP0Odd}. 
\begin{remark} \label{FirstRemarkAboutTheValueOfTheSumFromMainTheorem}
Observe that the value of $n-k_i$ is a decreasing function of $i\in\{1,\dots, n-1\}$ and therefore the sum $\sum_{k_i\in\mathcal I}(-1)^{i-1}(n-k_i)$ is nonnegative. For nonempty $\mathcal I$, the sum is maxmimized precisely when $\mathcal I=\{1\}$, in which case its value becomes $n-1$.   
\end{remark}
\subsection{Proof of Theorem \ref{MainResultForP0Even}} \label{ProofOfMainResultForP0Even}
In this section, we assume that $p> 0$ is even which forces $p_0$ to be even as well. 
Propositions \ref{PropositionForInitialReductionOfUpsilonMinusHalfOfTheSignature} and~\ref{PropositionSecondReductionStepForTheCaseOfP0BeingZero} guarantee that  
\begin{equation} \label{AuxEquation1InTheProveOfTheMainTheoremWhenP0IsEven}
\textstyle (\upsilon-\frac{1}{2}\sigma)(T(p,q)) = (\upsilon-\frac{1}{2}\sigma)(T(2q_n-2\eps_1r_n,q_n)).
\end{equation}
Proposition \ref{PropositionSecondReductionStepForTheCaseOfP0BeingEven} computes the value of $ (\upsilon-\frac{1}{2}\sigma)(T(2q_n-2\eps_1r_n,q_n))$ in terms of  $(\upsilon-\frac{1}{2}\sigma)(T(r_n\pm s_n,2s_n))$ as 
\begin{align} \label{AuxEquation2InTheProveOfTheMainTheoremWhenP0IsEven}
 (\upsilon  - \textstyle \frac{1}{2}\sigma)&(T(2q_n-2\eps_1r_n,q_n))  =  \cr
& =  \left\{ 
\begin{array}{llll}
\textstyle -(\upsilon -\frac{1}{2}\sigma)(T(r_n-\eta s_n,2s_n)) +n & \qquad  \eps_1=1, &\eps_2=1, \cr
\textstyle \phantom{-}(\upsilon -\frac{1}{2}\sigma)(T(r_n+\eta s_n,2s_n)) +1 & \qquad  \eps_1=1, &\eps_2=-1,\cr
\textstyle -(\upsilon -\frac{1}{2}\sigma)(T(r_n +  \eta s_n , 2s_n))+(n-1)  & \qquad  \eps_1=-1, &\eps_2=1,\cr
\textstyle \phantom{-}(\upsilon -\frac{1}{2}\sigma)(T(r_n -  \eta s_n , 2s_n)) & \qquad  \eps_1=-1, &\eps_2=-1,
\end{array}
\right.
\end{align}
where $\eta=1$ if $q_1\equiv 1\,(\text{mod }4)$, and let $\eta =-1$ if $q_1\equiv 3\,(\text{mod }4)$. These suffice to prove Theorem \ref{MainResultForP0Even} in the events of $n=1$ or $n=2$. Indeed, if $n=1$ then $r_n=r_1=1$ and $s_n=s_1=0$, and so $T(r_n\pm \eta s_n,2s_n) = T(1,0)$ regardless of the value of $\eta$. It follows from \eqref{AuxEquation1InTheProveOfTheMainTheoremWhenP0IsEven} and \eqref{AuxEquation2InTheProveOfTheMainTheoremWhenP0IsEven} that  
$$\textstyle (\upsilon -\frac{1}{2}\sigma)(T(p,q)) = (\upsilon -\frac{1}{2}\sigma)(T(2q_1-2\eps_1,q_1))  =  \left\{ 
\begin{array}{llll}
1 & \qquad  \eps_1=1, &\eps_2=1, \cr
1 & \qquad  \eps_1=1, &\eps_2=-1,\cr
0  & \qquad  \eps_1=-1, &\eps_2=1,\cr
0 & \qquad  \eps_1=-1, &\eps_2=-1,
\end{array}
\right. $$
as claimed by Theorem \ref{MainResultForP0Even}.  

If $n=2$ then $r_n=r_2=m_1$ and $s_n=s_2=1$, and therefore, regardless of the value of $\eta$, the torus knot $T(r_n\pm \eta s_n, 2s_n) = T(m_1 \pm \eta, 2)$ is an alternating torus knot, on which the function $\upsilon -\frac{1}{2}\sigma $ vanishes~\cite{OSSUpsilon:2017}. It follows then yet again from \eqref{AuxEquation1InTheProveOfTheMainTheoremWhenP0IsEven} and \eqref{AuxEquation2InTheProveOfTheMainTheoremWhenP0IsEven} that  
$$\textstyle (\upsilon -\frac{1}{2}\sigma)(T(p,q)) = (\upsilon -\frac{1}{2}\sigma)(T(2q_2-2\eps_1r_2,q_2))  =  \left\{ 
\begin{array}{llll}
2 & \qquad ;  \eps_1=1, &\eps_2=1, \cr
1 & \qquad ; \eps_1=1, &\eps_2=-1,\cr
1  & \qquad ; \eps_1=-1, &\eps_2=1,\cr
0 & \qquad ; \eps_1=-1, &\eps_2=-1,
\end{array}
\right. $$
which is also as claimed by Theorem \ref{MainResultForP0Even}.  

If $n\ge 3$, we shall rely on Proposition \ref{PropositionThirdReductionStepForTheCaseOfP0BeingEven} which tells us how to compute $(\upsilon-\frac{1}{2}\sigma)(T(\rho_{k,n}\pm \rho_{k+1,n},2\rho_{k+1,n}))$ in term of $(\upsilon-\frac{1}{2}\sigma)(T(\rho_{k+1,n}\pm \rho_{k+2,n},2\rho_{k+2,n}))$. In particular, said proposition shows that if $\eps_{k+1}\eps_{k+2}=1$, then 
$$\textstyle  (\upsilon-\frac{1}{2}\sigma)(T(\rho_{k,n}\pm \rho_{k+1,n},2\rho_{k+1,n})) = (\upsilon-\frac{1}{2}\sigma)(T(\rho_{k+1,n}\pm \rho_{k+2,n},2\rho_{k+2,n})),$$
where the sign on the right-hand side depends on the sign on the left-hand side, and on the congruence class of $m_k$ modulo 4 (if $m_k\equiv 0\,(\text{mod }4)$ the signs are opposite, while if $m_k\equiv 2\,(\text{mod }4)$ the signs are the same). Conversely, if $\eps_{k+1}\eps_{k+2} = -1$, then Proposition \ref{PropositionThirdReductionStepForTheCaseOfP0BeingEven} shows that 
$$\textstyle  (\upsilon-\frac{1}{2}\sigma)(T(\rho_{k,n}\pm \rho_{k+1,n},2\rho_{k+1,n})) = -(\upsilon-\frac{1}{2}\sigma)(T(\rho_{k+1,n}\pm \rho_{k+2,n},2\rho_{k+2,n})) + (n-k-1).$$
Here too the signs in $\rho_{k,n}\pm \rho_{k+1,n}$ and $\rho_{k+1,n}\pm \rho_{k+2,n}$ depend on the congruence class of $m_k$ modulo 4 (if $m_k\equiv 0\,(\text{mod }4)$ the signs are the same, if $m_k\equiv 2\,(\text{mod }4)$ the signs are opposite). This prompts the definition of the set $\mathcal J$ as 
$$\mathcal J = \{k\in \{1,\dots, n-2\}\,|\, \eps_{k+1}\eps_{k+2} = -1\},$$ 
and we write $\mathcal J=\{k_{1}, k_{2}, \dots, k_{\ell}\}$ with $k_{1}<k_{2}<\dots<k_{\ell}$. It follows from this discussion that 
\begin{align*}
\textstyle (\upsilon-\frac{1}{2}\sigma)&(T(r_n\pm s_n,2s_n))  = \textstyle (\upsilon-\frac{1}{2}\sigma)(T(\rho_{1,n}\pm \rho_{2,n},2\rho_{2,n})) \cr
& = \textstyle \left( \sum _{k_j\in \mathcal J} (-1)^{j-1} (n-k_j-1)\right) + (\upsilon-\frac{1}{2}\sigma)(T(\rho_{n,n}\pm \rho_{n+1,n},2\rho_{n+1,n}))   \cr
& = \textstyle \left( \sum _{k_j\in \mathcal J} (-1)^{j-1} (n-k_j-1)\right) + (\upsilon-\frac{1}{2}\sigma)(T(1,0))  \cr
& = \textstyle\sum _{k_j\in \mathcal J} (-1)^{j-1} (n-k_j-1).
\end{align*}
Combined with \eqref{AuxEquation1InTheProveOfTheMainTheoremWhenP0IsEven} and \eqref{AuxEquation2InTheProveOfTheMainTheoremWhenP0IsEven} this completes the proof of Theorem \ref{MainResultForP0Even}, after a shift of indices replacing $k+1$ by $k$.
\begin{remark} \label{SecondRemarkAboutTheValueOfTheSumFromMainTheorem}
The value of $n-k_j$ is a decreasing function of $j\in\{2,\dots, n-1\}$ and therefore the sum $\sum_{k_j\in\mathcal J}(-1)^{j-1}(n-k_j)$ is nonnegative. For nonempty $\mathcal J$, the sum is maxmimized precisely when $\mathcal J=\{2\}$, in which case its value becomes $n-2$.   
\end{remark}
\subsection{Proof of Theorem \ref{TheoremOnPartialSolutionOfNonorientableMilnorConjecture}} \label{SectionOnPartialProofOfNonOrientableMilnorConjecture}
Let $p,q>1$ be a pair of relatively prime integers of which $q$ is odd. Let $n$, $p_0$, $q_1$, $\{m_k\}_{k=1}^{n-1}$, $\{\eps_k\}_{k=1}^n$ be obtained from $(p,q)$ as in Theorem \ref{TheoremOnReversingPinchingMoves}. 

Consider firstly the case of $p$ odd, and let $\mathcal I$ be defined as in Theorem \ref{MainResultForP0Odd}. If $\mathcal I$ is not the empty set, write $\mathcal  I = \{ k_1, \dots, k_\ell\}$ with $k_1<k_2 < \dots < k_\ell$. Theorem \ref{MainResultForP0Odd} says that
$$\textstyle (\upsilon -\frac{1}{2}\sigma)(T(p,q)) = \left\{
\begin{array}{rl}
n - \sum _{k_i\in \mathcal I} (-1)^{i-1} (n-k_{i}) & \quad ; \quad q_1-\eps_1 \equiv 0\,(\text{mod }4), \cr & \cr
\sum _{k_i\in \mathcal I} (-1)^{i-1} (n-k_{i}) & \quad ; \quad q_1-\eps_1 \equiv 2\,(\text{mod }4),
\end{array}
\right.$$
with the sum over the empty set equaling zero. If $\mathcal I\ne \emptyset$, the alternating sum $\sum _{i\in \mathcal I} (-1)^{i-1} (n-k_{i})$ appearing in this formula, can take on values between 1 and $n-1$. It is greater than or equal to 1 because the terms in the sum are decreasing in absolute value, with the first one being positive. It is less than or equal to $n-1$ since its first term $n-k_1$ is less than or equal to $n-1$. Given this, the value of  $(\upsilon -\frac{1}{2}\sigma)(T(p,q))$ is less than or equal to $n-1$ regardless of the congruence class of $q_1-\eps_1$ modulo 4. 

If $\mathcal I = \emptyset$, which occurs precisely when $m_k\equiv 2\,(\text{mod }4)$ for $k=1,\dots, n-1$, then 
$$\textstyle (\upsilon -\frac{1}{2}\sigma)(T(p,q)) = \left\{
\begin{array}{rl}
n & \quad ; \quad q_1-\eps_1 \equiv 0\,(\text{mod }4), \cr & \cr
0 & \quad ; \quad q_1-\eps_1 \equiv 2\,(\text{mod }4),
\end{array}
\right.$$
proving part (a) of Theorem \ref{TheoremOnPartialSolutionOfNonorientableMilnorConjecture}. 
\vskip3mm
Secondly, consider the case of $p$ even, and let  $\mathcal J$ be defined as in Theorem \ref{MainResultForP0Even}. If $\mathcal J$ is not the empty set, write $\mathcal  J = \{ k_1, \dots, k_\ell\}$ with $k_1<k_2 < \dots < k_\ell$. If $n=1$ then part (b) of Theorem \ref{TheoremOnPartialSolutionOfNonorientableMilnorConjecture} is trivially true by Theorem \ref{MainResultForP0Even}. If $n\ge 2$, then Theorem \ref{MainResultForP0Even} says that 
$$\textstyle (\upsilon -\frac{1}{2}\sigma)(T(p,q)) = \left\{
\begin{array}{rll}
n - \sum _{k_j\in \mathcal J} (-1)^{j-1} (n-k_{j}) & \quad ; \quad \eps_1=1, &\eps_2=1, \cr
1 +\sum _{k_j\in \mathcal J} (-1)^{j-1} (n-k_{j}) & \quad ; \quad \eps_1=1, &\eps_2=-1, \cr
(n-1) - \sum _{k_j\in \mathcal J} (-1)^{j-1} (n-k_{j}) & \quad ; \quad \eps_1=-1, &\eps_2=1, \cr
\sum _{k_j\in \mathcal J} (-1)^{j-1} (n-k_{j}) & \quad ; \quad \eps_1=-1, &\eps_2=-1, \cr
\end{array}
\right.$$
with the sum over the empty set equaling zero. If $\mathcal J\ne \emptyset$, then the alternating sum $\sum _{j\in \mathcal J} (-1)^{j-1} (n-k_{j})$ appearing in all four cases in the above formula, lies between 1 and $n-2$. It is greater than or equal to 1 because the terms in the sum are decreasing in absolute value, with the first one being positive. It is less than or equal to $n-2$ since its first term $n-k_1$ is less than or equal to $n-2$. Given this, the value of  $(\upsilon -\frac{1}{2}\sigma)(T(p,q))$ is less than or equal to $n-1$ if $\eps_1 =1$, and is less than or equal to $n-2$ if $\eps_1=-1$. 

If $\mathcal J=\emptyset$, which can only occur if $n=2$ or if $n\ge 3$ and $\eps_k=\eps_2$ for all $k=3, \dots, n$, then    
$$\textstyle (\upsilon -\frac{1}{2}\sigma)(T(p,q)) = \left\{
\begin{array}{rll}
n  & \quad ; \quad \eps_1=1, &\eps_2=1, \cr
1  & \quad ; \quad \eps_1=1, &\eps_2=-1, \cr
(n-1) & \quad ; \quad \eps_1=-1, &\eps_2=1, \cr
0 & \quad ; \quad \eps_1=-1, &\eps_2=-1. \cr
\end{array}
\right.$$
We see that in this event we obatin $(\upsilon -\frac{1}{2}\sigma)(T(p,q)) = n$ if and only if $\eps_1=\eps_2=1$, proving part (b) of Theorem \ref{TheoremOnPartialSolutionOfNonorientableMilnorConjecture}. 
\vskip3mm
Similar arguments to those given above in the proof of Theorem \ref{TheoremOnPartialSolutionOfNonorientableMilnorConjecture}, can be used to obtain this characterization of knots $T(p,q)$ with $(\upsilon-\frac{1}{2}\sigma)(T(p,q)) = n-1$. 
\begin{proposition} \label{PropositionCharacterizingTheCaseOfUpsilonMinusSigmaOverTwoEqualsN-1}
Let $p,q>1$ be a pair of relatively prime integers of which $q$ is odd. Let $n$, $p_0$, $q_1$, $\{\eps_k\}_{k=1}^n$ be obtained from $(p,q)$ as in Theorem \ref{TheoremOnReversingPinchingMoves}.
\begin{itemize}
\item[(a)] If $p$ is odd then 
$$\upsilon (T(p,q)) -\textstyle \frac{1}{2}\sigma (T(p,q)) = n-1$$
if and only if one of the next two possibilities occurs.
\begin{itemize}
\item[(i)] $q_1-\eps_1 \equiv 0 \,(\text{mod }4)$ and either $m_{n-1}\equiv 0\,(\text{mod }4)$ (and $m_k\equiv 2\,(\text{mod }4)$ for all $k\ne n-1$), or if there are exactly two indices $k_1, k_2$, with $k_2=k_1+1$ and $m_{k_1},m_{k_2} \equiv 0 \,(\text{mod }4)$. 
\item[(ii)] $q_1-\eps_1 \equiv 2 \,(\text{mod }4)$ and $m_1\equiv 0\,(\text{mod }4)$ (and $m_k\equiv 2\,(\text{mod }4)$ for all $k>1$).
\end{itemize} 
For any such torus knot $T(p,q)$ one obtains 
\begin{equation} \label{EquationTorusKnotWithGamma4BetweenN-1AndN}
n-1\le \gamma_4(T(p,q)) \le n.
\end{equation}
\vskip2mm
\item[(b)] If $p$ is even then 
$$\upsilon (T(p,q)) -\textstyle \frac{1}{2}\sigma (T(p,q)) = n-1$$
if and only if there is exactly one index $k_0\in \{1,\dots, n\}$ with $\eps_{k_0}=-1$ (and $\eps_k=1$ for all $k\ne k_0$).  For any such torus knot $T(p,q)$ the double bound \eqref{EquationTorusKnotWithGamma4BetweenN-1AndN} holds. 
\end{itemize}
\end{proposition}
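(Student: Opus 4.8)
The plan is to read the value of $(\upsilon-\tfrac{1}{2}\sigma)(T(p,q))$ directly off Theorems \ref{MainResultForP0Odd} and \ref{MainResultForP0Even}, and then reduce the condition $(\upsilon-\tfrac{1}{2}\sigma)(T(p,q))=n-1$ to a purely combinatorial question about the alternating sum
\[
S=\sum_{k_i}(-1)^{i-1}(n-k_i),
\]
taken over $\mathcal I$ when $p$ is odd and over $\mathcal J$ when $p$ is even. The crucial preliminary is a sharpening of Remarks \ref{FirstRemarkAboutTheValueOfTheSumFromMainTheorem} and \ref{SecondRemarkAboutTheValueOfTheSumFromMainTheorem}. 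Writing $a_i=n-k_i$, the $a_i$ are strictly decreasing positive integers, so the grouping $S=(a_1-a_2)+(a_3-a_4)+\cdots$ shows $S\ge 1$ for a nonempty index set, while $S=a_1-(a_2-a_3+\cdots)$ shows $S\le a_1$ with equality if and only if there is a single index. Pushing this one step further pins down the extremal configurations: $S=1$ forces either a single index equal to $n-1$ or a pair of consecutive indices $k_1,k_1+1$, whereas $S$ equal to its maximal value forces a single index at the smallest admissible position. The base cases ($n$ small) will be checked directly against the corresponding base cases of Theorems \ref{MainResultForP0Odd} and \ref{MainResultForP0Even}.

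For part (a), the formula of Theorem \ref{MainResultForP0Odd} converts $(\upsilon-\tfrac{1}{2}\sigma)(T(p,q))=n-1$ into $S=1$ when $q_1-\eps_1\equiv 0\ (\mathrm{mod}\ 4)$ and into $S=n-1$ when $q_1-\eps_1\equiv 2\ (\mathrm{mod}\ 4)$. Recalling $\mathcal I=\{k:m_k\equiv 0\ (\mathrm{mod}\ 4)\}$, the $S=1$ analysis yields $\mathcal I=\{n-1\}$ or $\mathcal I=\{k_1,k_1+1\}$, which is exactly possibility (i), while the $S=n-1$ analysis yields $\mathcal I=\{1\}$, which is possibility (ii).

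For part (b), I will split according to $(\eps_1,\eps_2)$ as in Theorem \ref{MainResultForP0Even}. The case $\eps_1=\eps_2=-1$ demands $S=n-1$, which is impossible since $S\le n-2$ over $\mathcal J\subseteq\{2,\dots,n-1\}$; the remaining three cases demand $S=1$, $S=n-2$, or $\mathcal J=\emptyset$, respectively. The extra ingredient is the dictionary between $\mathcal J$ and the sign vector: since $k\in\mathcal J$ iff $\eps_k\eps_{k+1}=-1$, the set $\mathcal J$ records precisely the interior sign changes of $(\eps_2,\dots,\eps_n)$. Translating each extremal configuration of $\mathcal J$ (the empty set; the singleton $\{2\}$; a consecutive pair or the singleton $\{n-1\}$) back through this dictionary, together with the prescribed values of $\eps_1$ and $\eps_2$, collapses the sign vector in every case to one with exactly one entry equal to $-1$. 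I expect the bookkeeping near the boundary positions $k_0\in\{1,2\}$, $3\le k_0\le n-1$, and $k_0=n$ to be the main obstacle, precisely because whether a given sign change is "seen'' by $\mathcal J$ depends on whether it lies strictly inside the window $\{2,\dots,n-1\}$, so the endpoint sign changes must be tracked by hand. Finally, for any $T(p,q)$ meeting these conditions the bound \eqref{EquationTorusKnotWithGamma4BetweenN-1AndN} is immediate: the lower bound $\gamma_4\ge n-1$ is the OSS bound \eqref{OSSBoundOnGamma4} applied to $(\upsilon-\tfrac{1}{2}\sigma)(T(p,q))=n-1$, and the upper bound $\gamma_4\le n=b_1(F_{p,q})$ is the right-hand inequality of \eqref{EquationDoubleInequalityForGamma4}.
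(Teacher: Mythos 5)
Your proposal is correct and follows essentially the same route as the paper: read $(\upsilon-\tfrac12\sigma)$ off Theorems \ref{MainResultForP0Odd} and \ref{MainResultForP0Even}, characterize the extremal configurations of the alternating sum (value $1$ forces a singleton $\{n-1\}$ or a consecutive pair, the maximal value forces a singleton at the smallest admissible index, and $0$ forces the empty set), translate back into conditions on the $m_k$ in part (a) and, via the sign-change dictionary, into a single $\eps_{k_0}=-1$ in part (b), and obtain \eqref{EquationTorusKnotWithGamma4BetweenN-1AndN} from the OSS bound together with $\gamma_4\le b_1(F_{p,q})=n$. The case analysis you outline, including the impossibility of $\eps_1=\eps_2=-1$ and the endpoint bookkeeping for $k_0\in\{1,2\}$, $3\le k_0\le n-1$, $k_0=n$, is exactly the paper's argument.
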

\begin{proof}
The proof of this proposition is in the spirit of the proof of Theorem \ref{TheoremOnPartialSolutionOfNonorientableMilnorConjecture} given at the beginning of this section, and we shall recycle some of the details (and all of the notation) given there. 

If $p$ is odd then  
$$\textstyle (\upsilon -\frac{1}{2}\sigma)(T(p,q)) = \left\{
\begin{array}{rl}
n - \sum _{k_i\in \mathcal I} (-1)^{i-1} (n-k_{i}) & \quad ; \quad q_1-\eps_1 \equiv 0\,(\text{mod }4), \cr & \cr
\sum _{k_i\in \mathcal I} (-1)^{i-1} (n-k_{i}) & \quad ; \quad q_1-\eps_1 \equiv 2\,(\text{mod }4),
\end{array}
\right.$$
with $1\le \sum _{k_i\in \mathcal I} (-1)^{i-1} (n-k_{i})\le n-1$, provided $\mathcal I \ne \emptyset$. 

If  $q_1-\eps_1\equiv 0 \,(\text{mod }4)$ then $(\upsilon - \frac{1}{2}\sigma) (T(p,q)) = n-1$ if and only if $\sum _{i\in \mathcal I} (-1)^{i-1} (n-k_{i}) = 1$. The latter equality occurs if and only if the sum consists of the single nonzero term $n-k_1$ (forcing $\ell=1$ and $k_1=n-1$) or of the two nonzero terms $(n-k_1)-(n-k_2)$ (forcing $\ell=2$ and $k_2-k_1=1$). These two cases correspond to $m_{n-1}\equiv 0\,(\text{mod }4)$ (and with $m_{k}\equiv 2\,(\text{mod }4)$ for all $k\ne n-1$), and to there being exactly two indices $k_1$, $k_2$ with $k_2=k_1+1$  and with $m_{k_1}, m_{k_2} \equiv 0\,(\text{mod }4)$, respectively. 

If $q_1-\eps_1\equiv 2 \,(\text{mod }4)$ then  $(\upsilon - \frac{1}{2}\sigma) (T(p,q)) = n-1$  if and only if $\sum _{i\in \mathcal I} (-1)^{i-1} (n-k_{i}) = n-1$, which occurs if and only if the sum consists of the single term $n-k_1$, leading to $\ell = 1$ and $k_1=1$. 
\vskip3mm
If $p$ is even then 
$$\textstyle (\upsilon -\frac{1}{2}\sigma)(T(p,q)) = \left\{
\begin{array}{rll}
n - \sum _{k_j\in \mathcal J} (-1)^{j-1} (n-k_{j}) & \quad ; \quad \eps_1=1, &\eps_2=1, \cr
1 +\sum _{k_j\in \mathcal J} (-1)^{j-1} (n-k_{j}) & \quad ; \quad \eps_1=1, &\eps_2=-1, \cr
(n-1) - \sum _{k_j\in \mathcal J} (-1)^{j-1} (n-k_{j}) & \quad ; \quad \eps_1=-1, &\eps_2=1, \cr
\sum _{k_j\in \mathcal J} (-1)^{j-1} (n-k_{j}) & \quad ; \quad \eps_1=-1, &\eps_2=-1, \cr
\end{array}
\right.$$
with $1\le \sum _{k_j\in \mathcal J} (-1)^{j-1} (n-k_{j}) \le n-2$ provided $\mathcal J\ne \emptyset$. 

If $\eps_1=\eps_2=1$ then $(\upsilon -\frac{1}{2}\sigma)(T(p,q)) = n-1$ if and only if $\sum _{k_j\in \mathcal J} (-1)^{j-1} (n-k_{j}) = 1$, which occurs if and only if either the sum consists of the single term $n-k_1$ (forcing $\ell=1$ and $k_1=n-1$), or it consists of the two terms $(n-k_1)-(n-k_2)$ (forcing $l=2$ and $k_2=k_1+1$). These cases respectively correspond to $\eps_n=-1$ (and $\eps_k=1$ for $k=1,\dots, n-1$), and to $\eps_{k_2+1} = -1$ (and $\eps_k=1$ for $k\ne k_2+1$).

If $\eps_1=1$ and $\eps_2=-1$ then $(\upsilon -\frac{1}{2}\sigma)(T(p,q)) = n-1$ if and only if $\sum _{k_j\in \mathcal J} (-1)^{j-1} (n-k_{j}) = n-2$, which can only happen if $\ell=1$ and $k_1=2$. This case occurs if and only if $\eps_2=-1$ and $\eps_k=1$ for $k\ne 2$. 

If $\eps_1=-1$ and $\eps_2=1$ then $(\upsilon -\frac{1}{2}\sigma)(T(p,q)) = n-1$ if and only if $\sum _{k_j\in \mathcal J} (-1)^{j-1} (n-k_{j}) = 0$, which occurs only when $\mathcal J = \emptyset$, which in turn occurs if and only if $\eps_1=-1$ and $\eps_k=1$ for $k>1$.  

If $\eps_1=-1=\eps_2$ then  $(\upsilon -\frac{1}{2}\sigma)(T(p,q)) \le n-2$. 
\end{proof}
\section{Counterexamples} \label{SectionAboutCounterexamples}
Lobb's discovery of the failure of Conjecture \ref{NonorientableAnalogueOfMilnorsConjecture} for the torus knot $T(4,9)$ can be used to produce infinitely many more counterexamples. To see how, recall the parameters 
$$n, \quad p_0, \quad q_1, \quad  \{m_k\}_{k=1}^{n-1}, \quad \text{ and } \quad \{\eps_k\}_{k=1}^n$$
from Theorem \ref{TheoremOnReversingPinchingMoves}, from which the relatively prime pairs $(p_k, q_k)$ are obtained as (with $q_0 = 1$) 
\begin{align*}
p_1& = q_1p_0-2\eps_1, \cr 
p_k & = m_{k-1}p_{k-1}-\eps_k\eps_{k-1} p_{k-2}, \quad \quad k\ge 2, \cr
q_k & = m_{k-1}q_{k-1}-\eps_k\eps_{k-1} q_{k-2}, \quad \quad\, k\ge 2. 
\end{align*}
Their significance is that each torus knot $T(p_k,q_k)$, $k\ge 0$ is obtained from the knot $T(p_{k+1}, q_{k+1})$ by a single pinch move with sign $\eps_{k+1}$, and the value of $\gamma_4(T(p_k,q_k))$, $k\ge 1$ predicted by Conjecture \ref{NonorientableAnalogueOfMilnorsConjecture} is precisely $k$. 

Pick an arbitrary $n\ge 3$ and choose $p_0=0$, $q_1=5$ and $\eps_1=-1$. For $2\le k\le n$ pick $m_k\ge 2$ even, and $\eps_k\in\{\pm1\}$ at will, allowing for infinitely many choices. Then $T(p_2,q_2) = T(4,9)$ and each knot $T(p_k,q_k)$, $k\ge 3$ is related to $T(4,9)$ by $k-2$ pinch moves. Since $T(4,9)$ bounds a M\"obius band in the 4-ball, it follows that $\gamma_4(T(p_k,q_k))$, $k\ge 3$ is at most $k-1$, proving Corollary \ref{CorollaryToLobbsResult}.  

The concrete choices of $m_k=2$ and $\eps_k=-1$ for all $k$ leads to 
$$T(p_n,q_n) = T(2n,4n+1).$$
Thus, these torus knots have $\gamma_4$ at most $n-1$, but are predicted to have $\gamma_4$ equal to $n$ by Conjecture \ref{NonorientableAnalogueOfMilnorsConjecture}.  It is interesting to ask whether counterexamples can be found for which the predicted and actual value of $\gamma_4$ differ by an arbitrarily large amount.   


\end{document}